\definecolor{rred}{rgb}{0.7,0.0,0.2}
\definecolor{bblue}{rgb}{0.2,0.0,0.7}
\newcommand{\be}{\begin{eqnarray}}
\newcommand{\ee}{\end{eqnarray}}
\newcommand{\ba}{\begin{align}}
\newcommand{\ea}{\end{align}}
\newcommand{\bi}{\begin{itemize}}
\newcommand{\ei}{\end{itemize}}
\newcommand{\R}{\mathbb R}
\newcommand{\beq}[1]{\begin{equation} \label{#1}}
\newcommand{\eeq}{\end{equation}}
\newcommand{\beqa}{\begin{eqnarray}}
\newcommand{\eeqa}{\end{eqnarray}}
\newcommand{\bal}{\begin{align}}
\newcommand{\eal}{\end{align}}
\newcommand{\bsub}{\begin{subequations}}
\newcommand{\esub}{\end{subequations}}
\newcommand{\eqlab}[1]{\label{eq:#1}}
\renewcommand{\eqref}[1]{(\ref{eq:#1})}
\newcommand{\eqsref}[2]{(\ref{eq:#1}) and~(\ref{eq:#2})}
\newcommand{\figref}[1]{Fig.~\ref{fig:#1}}
\newcommand{\figlab}[1]{\label{fig:#1}}
\newcommand{\secref}[1]{section~\ref{sec:#1}}
\newcommand{\Secref}[1]{Section~\ref{sec:#1}}
\newcommand{\seclab}[1]{\label{sec:#1}}
\newcommand{\remref}[1]{Remark~\ref{remark:#1}}
\newcommand{\remlab}[1]{\label{remark:#1}}
\newcommand{\corref}[1]{Corollary~\ref{Corollary:#1}}
\newcommand{\corlab}[1]{\label{Corollary:#1}}
\newcommand{\propref}[1]{Proposition~\ref{proposition:#1}}
\newcommand{\proplab}[1]{\label{proposition:#1}}
\newcommand{\appref}[1]{App.~\ref{app:#1}}
\newcommand{\applab}[1]{\label{app:#1}}
\begin{document}

\title {Computation of saddle type slow manifolds using iterative methods}

\author {K. Uldall Kristiansen\thanks{The author was funded by a H. C. {\O}rsted post doc grant.}} 
\date {}
\maketitle

\begin{center}
\begin{tabular}{c}
Department of Applied Mathematics and Computer Science, \\
Technical University of Denmark, \\
2800 Kgs. Lyngby, \\
DK

\end{tabular}
\end{center}

\begin{abstract}
This paper presents an alternative approach for the computation of trajectory segments on slow manifolds of saddle type. This approach is based on iterative methods rather than collocation-type methods. Compared to collocation methods, that require mesh refinements to ensure uniform convergence with respect to $\epsilon$, appropriate estimates are directly attainable using the method of this paper. The method is applied to several examples including: A model for a pair of neurons coupled by reciprocal inhibition with two slow and two fast variables and to the computation of homoclinic connections in the FitzHugh-Nagumo system. 
\end{abstract}
\begin{keywords} 
Slow-fast systems, slow manifolds of saddle type, reduction methods.
\end{keywords}

\begin{AMS}
34E15, 34E13, 37M99
\end{AMS}
%

 \section{Introduction}
Slow-fast systems of the form
\begin{align}
 \dot x & = \epsilon X(x,y),\quad \dot y =Y(x,y),\eqlab{fs}
\end{align}
or equivalently
\begin{align}
 x' & = X(x,y),\quad y' =\epsilon^{-1}Y(x,y),\quad X,Y\in C^r,\,C^\infty\,\mbox{or}\,C^\omega,\eqlab{ss}
\end{align}
with $x\in \mathbb R^{n_s}$ and $y\in \mathbb R^{n_f}$ being the slow and fast variables, respectively, arise in a wide variety of scientific problems. Here $\dot{()}$ denotes the derivative with respect to the fast time $t$ whereas $()^\prime$ denotes differentiation with respect to the slow time $\tau=\epsilon t$. The vector-fields $X$ and $Y$ may in
general also depend upon the constant $\epsilon$ that measures the time-scale separation. For simplicity, however, the $\epsilon$-dependency shall in this paper always suppressed. Slow-fast systems appear 
in neuroscience \cite{domijan2006,rubin2002,rubin2007,row1,ski1,rin2}, chemical reaction dynamics \cite{olsen1983}, laser systems \cite{braza1989,dubbeldam1999,erneux1988,erneux1981a,erneux1981b}, meteorology and short-term weather forecasting \cite{lor1,lor2,lor3,tem1,van1}, molecular physics and the Born-Oppenheimer approximation \cite{McQ1}, the evolution and stability of the solar system \cite{las1,las2}, modeling of water waves in the presence of surface tension \cite{amick1989}, and the modeling of tethered satellites \cite{kri1,kri2}. The identification
of slow and fast variables is extremely useful because of dimension reduction. Indeed, the two limit systems \eqref{fs}$_{\epsilon=0}$ and \eqref{ss}$_{\epsilon=0}$ enable in many cases a description of the system with $\epsilon>0$ but sufficiently small. The actual identification of a time-scale separation parameter $\epsilon$ in a particular problem can, however, be a challenging task, even in planar problems see e.g. \cite{bro2}.


Although all of the problems mentioned above can be written in the form of \eqref{fs} or \eqref{ss}, they are typically dynamically very different. Some are dissipative and all the interesting dynamics takes place on a lower dimensional manifold \cite{lam2,micmen1}. Others are conservative and oscillatory \cite{alfi,amick1989,lor1,kri2}. In this case there is no complete theory (except for the case with only one slow and one fast degree of freedom \cite{arnold2007mathematical,geller2}) that relates the two limit systems \eqref{fs}$_{\epsilon=0}$ and \eqref{ss}$_{\epsilon=0}$ to $\epsilon>0$ but small. Finally, there are cases where different lower dimensional objects interact through stable and unstable manifolds to form very non-trivial dynamics, see e.g. \cite{desroches2014,guck4,olsen1983}. In dynamical systems, numerical computations can often offer great insight. However, in slow-fast systems with both attracting and repelling lower dimensional manifolds the time scale separation makes the computation of such dynamics a challenging task \cite{guc3}.  


\textbf{Slow-fast theory}. Consider a compact set of constrained equilibria $M_0=\{(x,y)\vert Y(x,y)=0\}$ with the spectrum $\text{spec}\,(\partial_y Y\vert_{M_0})$ satisfying
\begin{align}
\text{dist}\,(\text{spec}\,(\partial_y Y\vert_{M_0}),i\mathbb R)\ge c>0,\quad  \mbox{$c$ independent of $\epsilon$}.\eqlab{M0h}
\end{align}
Here $\partial_y Y$ is the Jacobian of $Y(x,\cdot)$. 
Condition \eqref{M0h} implies, by the implicit function theorem, that $M_0$ is a graph of some function 
\begin{align}
y=\eta_0(x),\eqlab{eta0}
\end{align}
that is $M_0=\{(x,y)\vert y=\eta_0(x)\}$. For $\epsilon=0$ this manifold $M_0$ is a fixed point set for \eqref{fs} which is normally hyperbolic. It is referred to as the critical manifold. Fenichel's theory \cite{fen1,fen2} then applies to $M_0$ so that there exists an invariant manifold $M_h=\{y=\eta(x)\}$, with $\eta$ smooth, which is $\mathcal O(\epsilon)$-close to $M_0$. The slow manifold $M_h$ is attracting if $\mbox{spec}\,(\partial_y Y\vert_{M_0})\subset \{z\in \mathbb C\vert \text{Re}z<0\}$ or repelling if $\mbox{spec}\,(\partial_y Y\vert_{M_0})\subset \{z\in \mathbb C\vert \text{Re}z>0\}$. Otherwise it is of saddle type. In this case there are both a stable 
manifold $W^s(M_h)$, on which 
trajectories are attracted 
exponentially fast towards $M_h$ forward in time, and an unstable manifold $W^u(M_h)$, on 
which trajectories are 
attracted exponentially fast towards $M_h$ backwards in time \cite{jon1}. Fenichel's theory also says that $W^s(M_h)$ and $W^u(M_h)$ are $\mathcal O(\epsilon)$-close to the stable and unstable manifolds of the fix point set $M_0\vert_{\epsilon=0}$ of \eqref{fs}$_{\epsilon=0}$. The normally hyperbolic slow manifolds are like center manifolds \cite{car1} but as opposed to center manifolds, slow manifolds are only local in the fast variables. {Slow manifolds are "global" in the slow variables in the sense that Fenichel's description of these objects only fails locally where \eqref{M0h} is violated.}

If on the other hand $\mbox{spec}\,(\partial_y Y\vert_{M_0})$ is not disjoint from the imaginary axis, but instead only satisfies
\begin{align}
\text{dist}\,(\mbox{spec}\,(\partial_y Y\vert_{M_0}),0)\ge c>0,\eqlab{fastcond}
\end{align}
$c$ independent of $\epsilon$, then the motion normal to $M_0$ is still fast but there is in general no invariant slow manifold nearby \cite{mac1}. However, if the vector-field 
\begin{align}
U=\begin{pmatrix}\epsilon X\\Y\end{pmatrix} \eqlab{Ufld}
\end{align}
is analytic then there is in this case some $M_e$ on which the restriction of the vector-field has exponentially small angle $\mathcal O(e^{-c/\epsilon})$ with the tangent space \cite{geller1,kri3}. The slow manifold $M_e$ is therefore exponentially close to being invariant. This holds even in the normally elliptic case where $\mbox{spec}\,(\partial_y Y\vert_{M_0})\subset i\mathbb R$ which is relevant for Hamiltonian systems. Only in the case of one fast degree of freedom does there exist a theory for the description of the fast dynamics off the slow manifold \cite{geller1}.

\textbf{Numerical methods}. There are traditionally two numerical approaches for the computation of slow manifolds. The first approach is to use collocation in the solution of an associated boundary value problem. 
The advantages of using a collocation based approach are many. One advantage is nonlinear differential equations are effectively replaced with nonlinear algebraic ones and the method therefore circumvents issues related to dynamic stability. This enables the computation of highly unstable orbit segments. The nonlinear algebraic equations can be solved by Newton's method provided a good initial guess is known. Collocation based approaches are also highly adaptable and can be directly integrated within the AUTO bifurcation analysis software \cite{doedel2000} to perform bifurcation analysis. The second approach for the computation of slow manifolds is simply to use direct integration (also called ``the sweeping method'' \cite{desroches2014}). {Direct integration is easy to use.} Also whereas a collocation method requires an accurate initial guess to converge, direct integration can be used to explore the phase space. In fact, an initial guess for a collocation approach is often obtained using direct integration. Direct integration, however, has some documented disadvantages, see e.g. \cite{england2004computing}. In particular this approach is limited to the computation of attracting slow manifolds (by forward integration) and repelling slow manifolds (by backward integration). The computation of trajectories following saddle type slow manifolds $M_h$ for a long time, $t= \mathcal O(\epsilon^{-1})$ or $\tau = \mathcal O(1)$, cannot be achieved by any ``stiff'' integration method. Even an exact initial value solver in the presence of round-off errors of magnitude $\delta$ will amplify this error to unit size in a time of order $\mathcal O(\epsilon \log \delta^{-1})$ \cite{guc3}. Such highly unstable orbit segments will be referred to as canards or more accurately canard segments.

There are many examples (e.g. Van der Pol system \cite{guck5}, model for reciprocal inhibition \cite{guc3,guck4}, FitzHugh-Nagumo \cite{guc2,guc3,guck6,jon1,kru1}) where important orbits have {canard segments}. Such orbits are referred to as canard orbits and these were first analyzed in planar slow-fast planar systems by Beno{\^i}t et al \cite{benoit1981}. They found canard orbits as stable limit cycles that only existed in an exponentially small parameter regime. They appeared as the intersections of attracting and repelling slow manifolds. In $\mathbb R^3$ with two slow variables and only one fast, canard orbits appear persistently. Collocation based methods have in general proven very useful for the computational analysis of such canards, see e.g. \cite{desroches2010}. However, it is also possible to compute these orbits in $\mathbb R^3$ by a simpler approach using direct integration combined with shooting to a section by applying forward integration on the attracting slow manifold and backwards integration on the repelling one \cite{guc7,wechselberger2005}. For canard segments on saddle-type slow manifolds there exists to date, to the author's best knowledge, no alternative to collocation methods.



\textbf{SMST algorithm}. Guckenheimer and Kuehn in \cite{guc3} developed an algorithm \textsc{SMST} (\textit{Slow Manifolds} of \textit{Saddle Type}) based on collocation for the computation of trajectories near a saddle-type slow manifold. 
The \textsc{SMST} method starts from an initial guess provided by the reduced system:
\begin{align}
 x' = X(x,\eta_0(x)),\eqlab{sss}
\end{align}
with $x(0)=x_0$ and $\tau\in [0,T]$. Here $T=\mathcal O(1)$ with respect to $\epsilon$. Set $z=(x,y)$ and let $x_T=x(T)$. The \textsc{SMST} algorithm then solves for a solution $z=z(\tau)$ that approaches the slow manifold near $z_0\equiv (x_0,\eta_0(x_0))$ and exits it near $z_T\equiv (x_T,\eta_0(x_T))$.  For this time is discretized $0=\tau_0<\tau_1<\cdots <\tau_N=T$ and on each mesh $\tau_{i}\le \tau\le \tau_{i+1}$ the $z=z(\tau)$ is replaced by a cubic interpolation based on the values $z_i\equiv z(\tau_i)$, $z_{i+1}\equiv z(\tau_{i+1})$ and the tangent vectors $z_i'\equiv V(z_i)$, $z_{i+1}' \equiv V(z_{i+1})$. Here $V=\epsilon^{-1} U$ with $U$ given in \eqref{Ufld}. The dynamical constraint $z' = V(z)$ is then enforced at the mid-points $\tau_{i+1/2} \equiv \frac12 (\tau_i + \tau_{i+1})$ using this cubic interpolation of $z=z(\tau)$. See also Eq. (2.1) in \cite{guc3}. This gives $n\times N$ equations for the $n\times (N+1)$ unknowns $z_0,\,z_1,\,\ldots,z_N$. The remaining $n$ equations are obtained from the boundary conditions which may be included in the following way. By 
assumption the matrix $\partial_y Y(z)$ introduces 
a splitting of the form $E_s^{z}\oplus E_u^{z}= \mathbb R^{n_f}$ where $E_s^{z}=E_s^z(x)$ and $E_u^{z}=E_u^z(x)$ can be interpreted as the stable and unstable eigenspaces of the \textit{constrained hyperbolic equilibria} $y=\eta(x)$ of $\dot y = Y(x,y)$, $x$ here being constrained as a parameter. Fenichel's theory guarantees that $E_s^z$ and $E_u^z$ are transverse to $W^u(M_h)$ and $W^s(M_h)$, respectively. 
Let 
\begin{align}
\pi_s^z:\quad &\mbox{\text{The projection onto $E_s^z$}},\eqlab{pis}\\
\pi_u^z:\quad &\mbox{\text{The projection onto $E_u^z$}}.\nonumber
\end{align}
 Then at $\tau=0$ one specifies $x(0)=x_0$ and ``the stable components'' of $y(0)=\eta_0(x_0)+y_{s0}+y_{u0}$ by fixing the value of
\begin{align}
y_{s0}=\pi_s^{z_0} (y(0)-\eta_0(x_0)).\eqlab{scomp}
\end{align}
In \cite{guc3} the value is fixed to $0$. At $\tau=T$, on the other hand, one specifies the ``unstable components'' of $y(T)=\eta_0(x_{T})+ y_{sT}+y_{uT}$ by fixing the value of
\begin{align}
y_{uT}=\pi_u^{z_T} (y(T)-\eta_0(x_T)).\eqlab{ucomp} 
\end{align}
The value is set to $0$ in \cite{guc3}. 
Here $\eta_0(x_T)$ is the value of the fast variables, when using \eqref{sss} for the propagation of the slow variables, at $\tau=T$. From the cubic interpolation the sparse Jacobian can be computed explicitly and a Newton method can be used to obtain an accurate solution. 
                                                      Note that Fenichel's theory implies that $y_{u0}$ and $y_{sT}$ are each $\mathcal O(\epsilon)$ since the stable and unstable fibers are $\mathcal O(\epsilon)$ close to the unperturbed ones. The time $T$ can be included as separate variable upon inclusion of a further boundary condition. 
                                                      
                                                       As the \textsc{SMST} method is formulated in \cite{guc3}, it cannot be used to approach trajectories on the slow manifold directly. {Trajectories will always include transitions at the ends. }In \cite[Section 4.2]{langfield2014} a related collocation based method is used to compute trajectories on an $1D$ attracting slow manifold using a continuation mechanism to \textit{push out} the fast part at the ends. It may be possible to extend this approach to saddle-type slow manifolds. 
                                                       
                                                       For the computation of a full orbit the \textsc{SMST} algorithm will in general have to be combined with a separate part that computes the remaining trajectory segments (e.g. via direct integration of \eqref{ss}). 
                                                      
                                                      Assume that \textsc{SMST} method converges to a solution $\sigma=\sigma(t)$ and that $z=z(t)$ is a true solution of $z'=V(z)$ that satisfies the $n(N+1)$ conditions. Then by Taylor's formula
\begin{align*}
 \Vert \sigma -z \Vert &\le \frac{1}{24}\max_{\tau \in [0,T]} \Vert z^{(4)}(\tau) \Vert \max_i \vert \tau_{i+1}-\tau_i\vert^4 \\
 &\le \mathcal O(\epsilon^{-4} \max_i \vert \tau_{i+1}-\tau_i\vert^4).
\end{align*}
The factor $\epsilon^{-4}$ appears from estimating $\Vert z^{(4)}\Vert$. This is too pessimistic on the slow manifold since there $z^{(4)}=\mathcal O(1)$ (by definition of being slow) but it is appropriate if the connections at the ends are fast.  If a mesh and boundary conditions are fixed, then based on this estimate, one will expect the error to grow as $\epsilon$ goes to zero. For example, the reference \cite{shi}, describes the use of collocation to solve the boundary value problem
 \begin{align}
  \epsilon u''(\tau) + u'(\tau) = 1,\quad u(0)=1=u(1),\eqlab{bctoy}
 \end{align}
  and it is shown that in order to ensure convergence estimates that are uniform with respect to $\epsilon$ for this problem, a fixed mesh must be replaced by an adaptive Shishkin mesh \cite{shi}. A Shiskin mesh is basically a piecewise uniform mesh that places more points at ends where the fast transitions occur. 
  {It is the main aim of this article to establish an alternative to collocation for the computation of saddle type slow manifolds, that accurately resolves both the slow motion along the slow manifold and the fast transitions, by splitting the computation into two sub-problems. The splitting will be obtained by the application of two iterative reduction methods: \textsc{SO} and \textsc{SOF}.}

 \textbf{Reduction methods}. The \textsc{SO} method (the method of \textit{Straightening Out}, also referred to as the iterative method of Fraser and Roussel \cite{kap3}) is an example of a reduction method that enables the computation of slow manifolds without direct reference to a small parameter as e.g. it is required when using asymptotic expansions. There are several alternative methods: The intrinsic low-dimensional manifold (ILDM) method of Maas and Pope \cite{maa1}, the zero-derivative principle (ZDP) \cite{kap1,kap2}, and the computational singular perturbation (CSP) method initially due to Lam and Goussis \cite{lam1,lam2}, and later thoroughly analyzed by Zagaris and co-
workers \cite{kap4}. The \textsc{SO} method has the following interesting and numerically advantageous features:
\begin{itemize}
 \item[(i)] It leads to exponential accurate slow manifolds.
 \item[(ii)] It can written in a form (see \eqref{fra2} below) that only involves the vector-field and its Jacobian matrix.
 \item[(iii)] It does not require smoothness of $X$ and $Y$ in $\epsilon$.
 \item[(iv)] The slow manifold approximation includes nearby equilibria.
\end{itemize}
For the purpose of this work (ii) is an important property. It means that the approach is easy to implement. In comparison with the other methods, where the number of partial derivatives required depends on the desired accuracy, the \textsc{SO} method only requires the vector-field and its first partial derivatives. Property (iii) might seem rather academic, but it highlights the methods potential in $\epsilon$-free systems (see \cite[Section 8]{kri4} and \cite{bro1,bro2,brokri1,eqn1}): The proof of statement (i) is not based on comparisons with asymptotic expansions in $\epsilon$. In the forthcoming paper \cite{brokri1} the authors apply the \textsc{SO} method in $\epsilon$-free systems. 

The \textsc{SOF} method \cite{kri4} (\textit{Straightening Out Fibers}) is also an iterative method, built as an extension to the \textsc{SO} method, that enables approximation of fibers in slow-fast systems. 
However, only the \textsc{SOF} method enjoys all the properties listed above. In \cite[Section 8]{kri4} it was furthermore demonstrated that the \textsc{SOF} method performed far better on a problem where the slow and fast variables had not been properly identified.

This paper aims to demonstrate that the iterative methods, \textsc{SO} and \text{SOF}, can be used to compute saddle-type slow manifolds where direct integration does not provide a viable simple alternative to collocation methods. 

\textbf{Aims of paper}. 
The idea behind the presented method is simply to split the computations into two non-stiff sub-problems: A computation on the slow manifold and a computation for the connection to and from the slow manifold. This approach is well-known. In fact it is at the very foundation of the theory of singular perturbation theory, geometric \cite{fen1,fen2,jon1} or non-geometric \cite[Chapter 10]{ascher1987}, \cite{tikhonov1952systems}, and its aim to connect $\epsilon\ne 0$ with $\epsilon=0$ of \eqsref{fs}{ss}. The novelty here, however, is to obtain the splitting using the \textsc{SO} method and the \textsc{SOF} method. In particular, the \textsc{SO} method will be used in a quadrature scheme for the propagation on the slow manifold. This procedure also applies to attracting or repelling slow manifolds (where direct integration of the full system probably offers a better approach) and even normally elliptic ones. Although, normally elliptic slow manifolds have not received as much attention as their hyperbolic counterparts, they do appear in a wide range problems in science \cite{alfi,amick1989,lor1,kri2}. 

In \cite{nip1} an alternative numerical scheme is suggested for the propagation on the slow manifold. {This is based on asymptotic expansions which require several partial derivatives of the vector-field $U$ with respect to the slow and fast variables but also with respect to the small parameter. The \textsc{SO} method only requires $U$ and the Jacobian $\partial_z U$ (see (ii) above). }

The \textsc{SOF} method enables, through an accurate projection onto the slow manifold, the computation of connections to and from a trajectory on the slow manifold. This computation will involve collocation but it is performed on the fast space only, using $\mathcal O(1)$-many time intervals of the fast time $t$, and will therefore not involve any $\epsilon^{-1}$-factors (as opposed to collocation on the full space). 
  The full method, which will be named \text{SO-SMST} method (\textit{Straightening Out} for \textit{Slow Manifolds}
 of \textit{Saddle Types}), will be described in full details in \secref{nonstiff}. 
It is among the main aims to demonstrate the use of the \textsc{SO-SMST} method and describe its performance. {This will include an analysis of discretized \textsc{SO} and \textsc{SOF} methods used in the implementation.} 
The \textsc{SO-SMST} method will be applied to sevaral examples and comparisons will be made with the \textsc{SMST} method. A thorough comparison with the \textsc{SMST} method is, however, not among the aims of the paper. This must be a topic for future research. Nevertheless, some potential advantages of the iterative method will be highlighted. For one thing, it will be stressed that the method presented here, does not have any issues with $\epsilon\rightarrow 0$. {This is for example documented by the inclusion of a linear test problem \eqref{bctoy} in \secref{SOSMSTExact} where the \textsc{SO-SMST} method captures the limit $\epsilon\rightarrow 0$ accurately. 
Hence, for certain specially structured
systems, there could be potentially interesting applications for the \textsc{SO-SMST}
method.} On the other hand, it should be pointed out that a certain disadvantage with the iterative approach taking here, is that for \textit{larger} values of $\epsilon$ \textsc{SO} and \textsc{SOF} may take longer time before reaching a specified tolerance. Worse yet, this tolerance may not reach at all since the iterative methods may require an $\epsilon$ that is smaller than what is required by Fenichel's theory. In these cases, it is very likely that the \text{SMST} method will perform far better.

\textbf{Outline of paper}. In \secref{iterativeMethods} the two different iterative methods are presented. This includes a modification of the \textsc{SO} method which is due to Neishtadt \cite{nei87}. \Secref{nonstiff} presents the \textsc{SO}-\textsc{SMST} method (\textit{Straightening Out} for \textit{Slow Manifolds}
 of \textit{Saddle Types}) 
for the computation of canard segments and their transients. \appref{Error} includes some error estimates. In \secref{numsom} some results on the numerical implementation of the iterative methods via finite differences is presented. This section furthermore covers the use of the \textsc{SO} method in a Runge-Kutta scheme. Finally, in \secref{examples} the \textsc{SO}-\textsc{SMST} method is applied to five different examples, including a nonlinear model of reciprocal inhibition with two slow and two fast variables. The results will be compared with trajectories computed using the \textsc{SMST} algorithm. As a further proof of concept homoclinic connections for the FitzHugh-Nagumo model are computed.

\textbf{Main results}. The main theoretical results of the paper are collected in the following:
\begin{itemize}
\item \Secref{nonstiff} contains the most important result by demonstrating the \textsc{SO-SMST} method and how the iterative methods can be applied for the approximation of canard segments for saddle-type slow manifolds. \propref{est} and \propref{est2} describe the errors associated with this approximation.
 \item In \corref{etah2} a discrete version of the \textsc{SO} method is presented. It is this discretized version which will be used in \textsc{SO-SMST}. It is shown that the discretized method approximates the slow manifold of \eqref{fs} up to an error of order $\mathcal O(e^{-c/\epsilon}+\epsilon^2 h^p)$. Here $h$ describes the grid size in an order $p$ finite difference operator.
 \item In \propref{sofmod} a discrete version of the \textsc{SOF} method is presented. This discretized version is used in \textsc{SO-SMST}. It is shown that this discretized method approximates the tangent spaces of the fibers of \eqref{fs} up to an error of order $\mathcal O(e^{-c/\epsilon}+\epsilon^2 h^p)$.
\end{itemize}

\textbf{Notation}. All norms will be denoted by $\Vert \cdot \Vert$ including operator norms. This should not cause unnecessary confusion. If $\mathcal U\subset \mathbb R^{n}$, $n\in \mathbb N$, then $\mathcal U+i\chi$ will denote its complex $\chi$-neighborhood:
\begin{align*}
 \mathcal U+i\chi = \{x\in \mathbb C^{n} \vert \sup_{y\in \mathcal U} \Vert x-y\Vert <\chi\}.
\end{align*}
Consider $f:\mathcal U+i\chi\rightarrow \mathbb C^m$, $m\in \mathbb N$, being analytic and bounded. Then Cauchy-estimates apply to $f$ in the following sense
\begin{align*}
\sup_{x_0\in \mathcal U+(\chi-\xi)}\Vert \partial_x f(x_0)\Vert \le\frac{\sup_{x\in \mathcal U+\chi}\Vert f(x)\Vert}{\xi},
\end{align*}
which will be written as
\begin{align}
\Vert \partial_x f\Vert_{\chi-\xi} \le\frac{\Vert f(x)\Vert_{\chi}}{\xi}.\eqlab{cauchy}
\end{align} 
Superscripts with $n\in \mathbb N_0$ will be used to denote partial sums such as:
\begin{eqnarray}
 \eta^n = \sum_{i=0}^n \eta_i,\quad n\ge 0,\eqlab{etaNN}
\end{eqnarray}
with each of the terms in the sum being enumerated through subscripts. Following this convention means that $\eta^0=\eta_0$. 


%

\section{The iterative methods}\seclab{iterativeMethods}
In this section, the different iterative methods used in this paper are presented. \Secref{som} presents the \textsc{SO} method. This section also includes the modification due to Neishstadt \cite{nei87}. \Secref{sofm} presents the \textsc{SOF} method. 
\subsection{The \textsc{SO} method: Approximation of the slow manifold}\seclab{som}
The \textsc{SO} method is an iterative approach to approximating an invariant slow manifold $M=\{y=\eta(x)\}$. The point of departure is the invariance equation:
\begin{align}
 0=-\epsilon \partial_x \eta(x) X(x,\eta(x))+Y(x,\eta(x)),\eqlab{inveqn}
\end{align}
which is obtained from \eqref{fs} by enforcing the invariance of the graph $y=\eta(x)$.
Basically, the \textsc{SO} method aims to solve this equation iteratively by considering the following equations:
\begin{align}
 -\epsilon \partial_x \eta^{n-1}(x) X(x,\eta^n(x))+Y(x,\eta^n(x))=0,\eqlab{fra2}
\end{align}
for $n\ge 1$ starting from $\eta^{0}(x)=\eta_0(x)$ \eqref{eta0} for $n=1$. Here superscripts are used because the functions $\eta^n$ will be obtained as partial sums. Recall \eqref{etaNN}. 
In the form \eqref{inveqn} the \textsc{SO} method is also known as the iterative method of Fraser and Roussel \cite{kap3}. Each step of the method involves the solution of a non-linear equation. There are some simple alterations to the method which makes the method computationally simpler. To present these, it is, however, advantageous to take a different view-point which will highlight the following:
\begin{itemize}
 \item[$1^\circ$] The method can be initiated from any initial guess;
  \item[$2^\circ$] The method leads to exponentially accurate approximations;
  \item[$3^\circ$] The method applies to any $M_0$ satisfying \eqref{fastcond};
  \item[$4^\circ$] The method can be altered so that if $\eta_0$ is known then the method only involves the solution of linear equations.
  \end{itemize}
The last point $4^\circ$ is perhaps not surprising because asymptotic expansions possess this property. However, most reduction methods are posed as fully non-linear algebraic equations. 
The \textsc{SO} method does not require the slow-fast system to be written in the canonical form \eqref{fs}. In \secref{Lindemann} we consider the Lindemann mechanism 
\begin{align}
 \dot x &=-x(x-y),\eqlab{lm0}\\
 \dot y &=x(x-y)-\epsilon y,\nonumber
\end{align}
which is a slow-fast system where the slow and fast variables have not been properly identified. The iterative methods \textsc{SO} and \textsc{SOF} still apply to such systems \cite[Section 8]{kri4} but it is unclear how to apply these methods if one is presenting these using $\epsilon X$ for $\dot x$. Clearly $\dot x$ is not small throughout for \eqref{lm0}.
Therefore in this section we replace $\epsilon X$ by
\begin{align*}
X^\epsilon\equiv \epsilon X
\end{align*}
and consider the equations
\begin{align*}
 \dot x &=X^\epsilon(x,y),\\
 \dot y &=Y(x,y),
\end{align*}
instead of \eqref{fs}. Having said that, the main focus below will still be on the case where $X^\epsilon$ is $\epsilon X$ and small throughout. All the proofs of the statements are based on the canonical slow-fast form \eqref{fs}. The reason why the iterative methods still apply when $X^\epsilon$ is not small throughout is that one in fact only needs 
\begin{align}
X^\epsilon(x,\eta_0(x))=\mathcal O(\epsilon),\eqlab{XepsSmall}
\end{align}
to be small with respect to $\epsilon$. 

Now suppose that \eqref{M0h} holds true so that $M_0$ is normally hyperbolic. Suppose furthermore that $y=\zeta_0(x)$ is an approximation to the slow manifold in the sense that it satisfies \eqref{inveqn} up to a small error $\delta_0  = \sup_x \Vert \rho_0(x)\Vert$:
\begin{align}
  \rho_0(x)=-\partial_x \zeta_0(x) X^\epsilon(x,\zeta_0(x))+Y(x,\zeta_0(x)).\eqlab{rho01}
\end{align}
The function $\rho_0=\rho_0(x)$ is the \textit{obstacle} to invariance of the slow manifold: If $\rho_0\equiv 0$ then $\zeta_0=\zeta_0(x)$ satisfies the invariance equation \eqref{M0h} and defines an invariant slow manifold. 
 The approximation $\zeta_0$ could be $\eta_0$ from \eqref{eta0}. Then introduce $y_0$ by 
 \begin{align}
 y=\zeta_0(x)+y_0.\eqlab{so0}
 \end{align}
 The transformation \eqref{so0} \textit{straightens out} the approximation of the slow manifold $y=\zeta_0(x)$ to $y_0=0$. The new equations for $y_0$ are:
 \begin{align}
 \dot{y}_0 =Y_0(x,y_0)&\equiv-\partial_x \zeta_0 X^\epsilon (x,\zeta_0(x)+y_0) + Y(x,\zeta_0(x)+y_0)\nonumber\\
 &=\rho_0(x) + A_0(x)y_0+R_0(x,y_0),\eqlab{y0}
\end{align}
with $\rho_0$ as in \eqref{rho01},
\begin{align}
 A_0(x) &= -\partial_x \zeta_0(x) \partial_y X^\epsilon (x,\zeta_0(x)) + \partial_y Y(x,\zeta_0(x)), \eqlab{A0}
 \end{align}
and $R_0=\mathcal O(y_0^2)$. The equality in \eqref{y0} is due to the Taylor expansion of $Y_0$ about $y_0=0$.  
 The condition \eqref{M0h} implies that $\Vert (\partial_y Y\vert_{M_0})^{-1}\Vert\ll \epsilon^{-1}$. The matrix-valued function $A_0=A_0(x)$ in \eqref{A0} is therefore invertible for $\epsilon$ sufficiently small. Also since $\delta_0$ is assumed to be small, the contraction mapping theorem implies that there exists a solution $\eta_1=\eta_1(x)\approx -A_0(x)^{-1}\rho_0(x)$ of $Y_0(x,\eta_1) = 0$:
\begin{align}
 0=\rho_0(x)+A_0(x)\eta_1+R_0(x,\eta_1),\eqlab{fra0}
\end{align}
satisfying
\begin{align}
 \sup_x \Vert \eta_1(x)\Vert =\mathcal O(\delta_0).\eqlab{deltap}
\end{align}
The solution $\eta_1$ is analytic if $X^\epsilon,\,Y\in C^\omega$. 
Note also that \eqref{fra0} cf. \eqref{y0} can be written as
\begin{align}
 0=-\partial_x \zeta_0(x) X^\epsilon(x,\zeta_0(x)+\eta_1(x))+Y_0(x,\zeta_0(x)+\eta_1(x)).\eqlab{fra1}
\end{align}
Now, straighten out this new approximation $y_0=\eta_1(x)$ of the slow manifold to $y_1=0$ by setting $y_0=\eta_1(x)+y_1$ so that
 \begin{align*}
  \dot y_1 &=Y_1(x,y_1) = \rho_1(x) + A_1(x)y_1+R_1(x,y_1),
 \end{align*}
with
\begin{align}
 \rho_1(x) &=-\partial_x\eta_1(x) X^\epsilon (x,\zeta_0(x)+\eta_1(x)).\eqlab{rho1}
\end{align}
If the vector-fields $X^\epsilon$ and $Y$ are analytic then one can apply Cauchy estimates \eqref{cauchy} to estimate $\sup_x \Vert \partial_x \eta_1(x)\Vert$ in terms of $\sup_x \Vert \eta_1(x)\Vert$ on a smaller domain so that
\begin{align*}
 \delta_1\equiv\sup_x \Vert \rho_1(x)\Vert=\mathcal O(\epsilon \delta_0).
\end{align*}
Hence the new error is of the order of $\epsilon$ times the previous error. If one starts with $\zeta_0=\eta_0$ then the error $\rho_0$ is $\mathcal O(\epsilon)$ and applying the procedure successively therefore directly leads to formal error estimates of the form $\mathcal O(\epsilon^{n+1})$, even when the vector-field $U$ is only $C^r$, $r\ge n+1$ \cite{kap3}. In terms of the original variables the approximation takes the form $$y=\eta^{n}(x) = \zeta_0(x)+\eta_1(x)+\cdots +\eta_{n}(x).$$ 
The form in \eqref{fra1} immediately implies that the procedure can be written compactly as \eqref{fra2}
for the approximation $y=\eta^n(x)$ of $y=\eta(x)$ satisfying \eqref{inveqn}. 
The alternative presentation of the \textsc{SO} method above, which is due to MacKay \cite{mac1}, has the advantage that it shows that one does not need to start the procedure from $\eta_0$. One could also just start from a guess $y=\zeta_0(x)$. 
The new error will still be $\epsilon$ times a $C^1$ estimate of the previous error cf. \eqref{deltap}. This explains $1^\circ$. 

The $\mathcal O(\epsilon^{k+1})$-estimate is not uniform in $k$: In the analytic case the domain of definition will eventually vanish when iteratively applying the Cauchy estimates. Using Neishstadt-type estimates it was, however, shown in \cite{kri3} that the error can be made exponentially small. This explains $2^\circ$. From this presentation, it is also clear that condition \eqref{M0h} is not needed. The importance is just that $A_0$ can be inverted and for this \eqref{fastcond} suffices. The results of \cite{kri3} does therefore not only apply to normally hyperbolic $M_0$'s. It also holds for normally elliptic slow manifolds, which confirmed a conjecture by MacKay \cite{mac1}. This shows $3^\circ$. A remarkable property of the \textsc{SO} method is in fact that it does not require $A_0$ to be bounded. Only $A_0^{-1}$ is measured, making the method potentially useful in the analysis of slow ODE - fast PDE systems like the one in \cite{kri1}.   


Computationally the \textsc{SO} method involves solving a nonlinear equation at each step $n$. In practice, the method therefore involves two loops: An outer loop updating $n$ and an inner loop using e.g. a Newton method for the solution $\eta^n$ of the nonlinear equation \eqref{fra2}. A result of Neishstadt in \cite[Lemma 1]{nei87} shows, however, that this inner loop is actually not necessary. Furthermore, the matrix-valued functions $A_i=A_i(x)$, that appear by the procedure outlined above, does not need to be updated. 
 \begin{proposition}\proplab{modSOprop}
  (Modified \textsc{SO} method, Lemma 1 in \cite{nei87}) Consider the slow-fast system \eqref{fs} with $X^\epsilon$ and $Y$ analytic on some complex $(\chi,\nu)$-neighborhood $(x,y)\in (\mathcal U+i\chi)\times (\mathcal V+i\nu)$ of $\mathcal U\times \mathcal V$ for some compact sets $\mathcal U$ and $\mathcal V$ in $\mathbb R^{n_s}$ and $\mathbb R^{n_f}$, respectively. Assume furthermore that $y=\zeta_0(x)$ is an approximation to the slow manifold so that $A_0=A_0(x)$ \eqref{A0} is invertible on $\mathcal U$ and that the error $\delta_0= \Vert \rho_0\Vert_\chi$ in \eqref{rho01} is sufficiently small. Then for $\epsilon$ and $\delta_0$ sufficiently small the transformation $$y=\eta(x)+\tilde y,$$ $\eta\equiv \eta^{N(\epsilon)}=\zeta_0+ \sum_{n=1}^{N(\epsilon)} \eta_n$ with $N(\epsilon) = \mathcal O(\epsilon^{-1})$ and $\eta_n=\eta_n(x)$ satisfying
  \begin{align}
  \eta_n (x)&= -A_{0}(x)^{-1}\rho_{n-1}(x),\eqlab{etan}\\
  \rho_{n-1}(x)&=-\partial_x \eta_{n-1}(x) X^\epsilon(x,\eta^{n-1}(x))+Y(x,\eta^{n-1}(x)),\nonumber
  \end{align}
  with $A_0$ as in \eqref{A0}, 
 will on $\mathcal U\times \mathcal V$ transform \eqref{fs} into
 \begin{align*}
  \dot{\tilde y} = \tilde \rho(x) + \tilde A(x) \tilde y +\tilde R(x,\tilde y)
 \end{align*}
with $\tilde R=\mathcal O(\tilde y^2)$ and
\begin{align*}
 \Vert \tilde \rho\Vert_0 = \mathcal O(e^{-c/\epsilon}),
\end{align*}
with $c>0$ independent of $\epsilon$ and $\delta$. 
\end{proposition}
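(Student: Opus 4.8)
The plan is to run Neishtadt's optimal-truncation argument: iterate the linear recursion \eqref{etan} on a shrinking family of complex strips in the slow variable, show that each step contracts the invariance obstacle by a fixed factor while consuming a small sliver of analyticity domain, and stop after exactly as many steps as the strip of width $\chi$ permits. Concretely, I would fix widths $\chi=\chi_0>\chi_1>\cdots>\chi_N=0$ with $\xi_n=\chi_{n-1}-\chi_n$ the width surrendered at step $n$, and set $\delta_n=\Vert\rho_n\Vert_{\chi_n}$, where $\rho_n$ is the obstacle to invariance of the partial sum $\eta^n=\zeta_0+\sum_{i=1}^n\eta_i$ in the sense of \eqref{rho01}. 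From \eqref{etan} and the uniform-in-$\epsilon$ bound on $\Vert A_0^{-1}\Vert$ coming from \eqref{M0h}, one has at once $\Vert\eta_n\Vert_{\chi_{n-1}}\le\Vert A_0^{-1}\Vert\,\delta_{n-1}=\mathcal O(\delta_{n-1})$, and also $\Vert\eta^n-\zeta_0\Vert=\mathcal O(\delta_0)$, so that $\eta^n(x)$ stays inside $\mathcal V+i\nu$ throughout.

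The crux is the one-step contraction estimate. Expanding the obstacle of $\eta^n=\eta^{n-1}+\eta_n$ about $\eta^{n-1}$ and inserting $\eta_n=-A_0^{-1}\rho_{n-1}$, the term linear in $\rho_{n-1}$ is cancelled \emph{up to} the drift caused by freezing the matrix at $A_0$, leaving schematically
\[
\rho_n=\bigl(I-A_{n-1}A_0^{-1}\bigr)\rho_{n-1}-\partial_x\eta_n\,X^\epsilon(x,\eta^n)+\mathcal O(\Vert\eta_n\Vert^2),
\]
where $A_{n-1}$ is the exact linearization \eqref{A0} evaluated at $\eta^{n-1}$. Since $X^\epsilon=\epsilon X=\mathcal O(\epsilon)$ and the Cauchy estimate \eqref{cauchy} gives $\Vert\partial_x\eta_n\Vert_{\chi_n}\le\xi_n^{-1}\Vert\eta_n\Vert_{\chi_{n-1}}=\mathcal O(\xi_n^{-1}\delta_{n-1})$, the middle term is $\mathcal O(\epsilon\,\xi_n^{-1}\delta_{n-1})$. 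The drift factor obeys $\Vert I-A_{n-1}A_0^{-1}\Vert\le\Vert A_0-A_{n-1}\Vert\,\Vert A_0^{-1}\Vert=\mathcal O(\delta_0+\epsilon)$ (because $A_{n-1}-A_0$ is built from $\eta^{n-1}-\zeta_0=\mathcal O(\delta_0)$ and from $\partial_yX^\epsilon=\mathcal O(\epsilon)$), and the quadratic remainder is $\mathcal O(\delta_{n-1}^2)$. Hence
\[
\delta_n\le\Bigl(C\,\frac{\epsilon}{\xi_n}+C'\delta_0+C''\epsilon\Bigr)\delta_{n-1}+\mathcal O(\delta_{n-1}^2).
\]

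Finally I would iterate and optimise the truncation length. Choosing the uniform schedule $\xi_n=\chi/N$ makes the first bracketed term equal to $C\epsilon N/\chi$; minimising $(C\epsilon N/\chi)^N$ selects $N(\epsilon)=\lfloor\chi/(eC\epsilon)\rfloor=\mathcal O(\epsilon^{-1})$, at which $C\epsilon N/\chi=1/e$. For $\epsilon$ and $\delta_0$ below fixed thresholds the remaining terms are smaller than $1-1/e$, so the per-step factor is bounded by some $\kappa<1$ uniformly in $n$, giving $\delta_{N(\epsilon)}\le\kappa^{N(\epsilon)}\delta_0=\mathcal O(e^{-c/\epsilon})$ with $c=\log(1/\kappa)\,\chi/(eC)$ independent of $\epsilon$ and $\delta_0$. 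Since $\sum_n\xi_n=\chi$, the final obstacle is controlled on the real set $\mathcal U$, i.e.\ in $\Vert\cdot\Vert_0$. Performing the single change of variables $y=\eta^{N(\epsilon)}(x)+\tilde y$ and reading off the transformed right-hand side then yields $\tilde\rho=\rho_{N(\epsilon)}$ with $\Vert\tilde\rho\Vert_0=\mathcal O(e^{-c/\epsilon})$, the accumulated linear part $\tilde A$, and $\tilde R=\mathcal O(\tilde y^2)$, which is the assertion.

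The main obstacle is keeping the contraction genuinely \emph{uniform} across all $\mathcal O(\epsilon^{-1})$ iterations while the strips shrink. One must verify that freezing the matrix at $A_0$ never spoils either the invertibility or the uniform bound on $A_0^{-1}$, so that $A_0^{-1}$ remains an admissible preconditioner, and that the drift $A_{n-1}-A_0$ together with the accumulated $\partial_x\eta_i$ stays small enough that the bracket above remains below one for every $n\le N(\epsilon)$. The delicate point is that bounding $\sum_i\Vert\partial_x\eta_i\Vert$ relies on precisely the geometric decay $\delta_i\lesssim\kappa^i\delta_0$ that the estimate is meant to establish, so the per-step inequality and the decay of $\delta_n$ must be proved together by a bootstrap/induction. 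It is this balance between the $\mathcal O(\epsilon)$ factor furnished by $X^\epsilon$ and the finite total width $\chi$ available for Cauchy estimates that pins the optimal stopping time at $N(\epsilon)=\mathcal O(\epsilon^{-1})$.
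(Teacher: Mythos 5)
Your proposal is correct and follows essentially the same route as the paper's (sketched) proof: both are Neishtadt's shrinking-strip argument, with the per-step bound $\delta_n \le \left(C\epsilon\xi_n^{-1} + \mathcal O(\delta_0+\epsilon)\right)\delta_{n-1} + \mathcal O(\delta_{n-1}^2)$ obtained from a Cauchy estimate on $\partial_x\eta_n$ and the smallness of the frozen-matrix drift, followed by $\mathcal O(\epsilon^{-1})$ iterations to reach exponential smallness. The only differences are cosmetic bookkeeping: you work in the original coordinates with the full obstacle and a uniform width schedule $\xi_n=\chi/N$ optimized over $N$, whereas the paper tracks successively transformed vector fields with an accumulated drift $a_n$ and the adaptive choice $\xi_n=4c_n\epsilon$ giving a per-step factor $\tfrac12$; your closing remark on the bootstrap needed to keep the contraction uniform is exactly the delicate point the paper defers to \cite{nei87}.
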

\begin{proof}
The proof is only sketched. For all the details see \cite{nei87}.  
At the $n$th-step the equations take the following form
\begin{align*}
 \dot x &=X^\epsilon_{n-1}(x,y_{n_1}),\\
 \dot y_{n-1} &=Y_{n-1}(x,y_{n-1})\rho_{n-1}(x)+(A_0(x)+a_{n-1}(x))y_{n-1}+R_{n-1}(x,y_{n-1}),
\end{align*}
with $\delta_{n-1} = \Vert \rho_{n-1} \Vert_{\nu_{n-1}}$ and $\Vert a_{n-1}\Vert_{\nu_{n-1}}=\mathcal O(\epsilon\delta_0)$.  
The variables $y_n$ is then introduced in accordance with \eqref{etan}:
\begin{align*}
 y_{n-1}  =\eta_n(x) + y_{n},
\end{align*}
giving 
\begin{align*}
\dot x &=X^\epsilon_n(x,y_n),\\
 \dot y_{n} &=\rho_{n}(x)+(A_0(x)+a_{n}(x))y_n+R_{n}(x,y_{n}).
\end{align*}
with
\begin{align}
 \rho_n &= - \partial_x \eta_n(x) X_{n-1}^\epsilon(x,\eta_n)+a_{n-1}\eta_n + R_{n-1}(x,\eta_n),\eqlab{rhon}\\
 a_{n} &=-\partial_x \eta_n \partial_y X^\epsilon_{n-1}(x,\eta_n)+a_{n-1}(x)+\partial_y R(x,\eta_n),\eqlab{ean}
\end{align}
and $X_{n}^\epsilon(x,y_n)=  X_{n-1}^\epsilon(x,\eta_n+y_n)$. This gives
\begin{align*}
 \Vert \rho_n \Vert_{\nu_{n}} &\le c_n\left( \epsilon \xi_{n}^{-1} \delta_{n-1} +\epsilon\delta_0\delta_{n-1}+\delta_{n-1}^2\right), \nonumber\\
  \nu_n &=\nu_{n-1}-\xi_{n},
\end{align*}
for some $c_n>0$, upon applying a Cauchy estimate. The last two terms are subordinate to the first term and one can therefore take $\xi_n = 4c_n\epsilon$ so that for $\delta_{n-1}$ sufficiently small
\begin{align*}
 \Vert \rho_n \Vert_{\nu_n} \le \frac{1}{2}\delta_{n-1}.
\end{align*}
 Also $\Vert a_n\Vert_{\nu_n}=\mathcal O(\epsilon+\delta_0)$. One can then easily bound $c_n\le 2c_0$ for $\epsilon$ sufficiently small and therefore uniformly bound the $\xi_n$'s and take $\mathcal O(\epsilon^{-1})$ steps before the domain vanishes. This gives the exponential estimate. 

\end{proof}

This shows $4^\circ$. 

\begin{remark}
If $X^\epsilon=\epsilon X$ is small then one could just replace $A_0$ in \propref{modSOprop} by $\partial_y Y(x,\eta_0(x))$ since their difference is $\mathcal O(\epsilon)$. In the more general case, where the slow variables have not been properly identified, then $-\partial_x \eta_0 \partial_y X^\epsilon(x,\eta_0)=\mathcal O(1)$ and replacing $A_0$ by $\partial_y Y(x,\eta_0(x))$ will not work. We will focus on this in greater details in our forthcoming paper \cite{brokri1}. 
\end{remark}

\begin{remark}\remlab{modSO}
 The error $\rho_0$ is given by
 \begin{align*}
 \rho_0(x) &=-\partial_x \zeta_0(x) X^\epsilon(x,\zeta_0(x))+Y(x,\zeta_0(x)).
 \end{align*}
 If $\zeta_0=\eta_0$ then $\rho_0$ vanishes at any equilibrium of the form $(x,y)=(x^e,\eta_0(x^e))$ where $X^\epsilon (x^e,\eta_0(x^e))=0$ and $Y(x^e,\eta_0(x^e))=0$. 
 Proceeding by inducion on $n$ using \eqref{etan}, it easily follows that the modified \textsc{SO} method will preserve this property so that $\rho_{n-1}(x^e)=0$ and hence all of the approximations $\eta^n$ will include equilibria.
\end{remark}
\begin{remark}\remlab{noniter}
 The iterative method cannot be used to compute the canard orbits as those in \cite{desroches2010} that appear as the intersection of an attracting slow manifold with a repelling one. This is because near the intersection the condition \eqref{fastcond} is violated.  
\end{remark}


The following section describes the \textsc{SOF} method which will be used to approximate the fibers.
 \subsection{The \textsc{SOF} method: Approximation of fibers}\seclab{sofm}
 Let $M_h$ be a slow manifold of saddle type, with a stable manifold $W^s(M_h)$ of dimension $n_s+n_f^s$ and an unstable manifold $W^u(M_h)$ of dimension $n_s+n_f^u$ ($n_f=n_f^s+n_f^u$). Then Fenichel's theory shows that there exists a local transformation $(u,v,w)\mapsto (x,y)$, with $\text{dim}\,\{v\}={n_f^s}$ and $\text{dim}\,\{w\}={n_f^u}$, mapping \eqref{fs} into the \textit{Fenichel normal form} \cite{jon1}:
\begin{align}
         \dot u &=\epsilon (U_0(u)+U_1(u,v,w)vw),\nonumber\\
\dot v&=V(u,v,w)v,\eqlab{fnf2}\\
\dot w&=W(u,v,w)w.\nonumber
\end{align}
Here $U_1(u,v,w):\{v\}\times \{w\}\rightarrow \mathbb R^{n_s}$ is a
bilinear function of $v$ and $w$. The slow manifold is then given by
$\{v=0,\,w=0\}$ with stable manifold $\{w=0\}$ and unstable manifold
$\{v=0\}$. 
Note in particular, that the slow vector-field is independent of the fast variables to linear order. The \textsc{SOF} method approaches this ideal. To explain this first assume that the \textsc{SO} method has been applied for an approximation of the slow manifold $y=\eta(x)$. Then introduce $y_0$ by $y=\eta(x)+y_0$ so that 
\begin{align*}
 \dot x &=\Lambda^\epsilon(x)+\mu_0(x)y_0+T(x,y_0),\\
 \dot y_0&=A(x_0)y_0+R(x_0,y_0),
\end{align*}
neglecting the exponentially small terms. Here 
\begin{align}
\Lambda^\epsilon(x) &= X^\epsilon(x,\eta(x))=\mathcal O(\epsilon), \eqlab{Lambda}\\
\mu_0(x) &= \partial_y X^\epsilon(x,\eta(x)),\eqlab{mu0}
\end{align}
and
\begin{align}
 A(x) = - \partial_x \eta \partial_y X^\epsilon(x,\eta(x))+\partial_y Y(x,\eta(x)),\eqlab{A}
\end{align}
while $R=\mathcal O(y_0^2),T=\mathcal O(\epsilon y_0^2)$. We then seek a transformation of the slow variables of the form 
\begin{align}
x=x_0+\phi^\epsilon_0 (x_0) y_0,\eqlab{x0}
\end{align}
pushing the error $\gamma_0 = \Vert \mu_0 \Vert=\mathcal O(\epsilon)$ to higher order in $\epsilon$. Here $\phi^\epsilon_0\in \mathbb R^{n_s\times n_f}$ and the superscript $\epsilon$ is as above used to highlight that $\phi_0^\epsilon$ will be $\mathcal O(\epsilon)$ if the slow-fast system is written in the canonical slow-fast form \eqref{fs}. Applying the transformation in \eqref{x0} gives
\begin{align}
 \dot x_0 &=\Lambda^\epsilon(x_0) + \left\{   \partial_x \Lambda^\epsilon  (x_0) \phi^\epsilon_0(x_0) +\mu_0(x_0)-\phi^\epsilon_0(x_0) A(x_0) +\mu_1(x_0)\right\} y_0 +\mathcal O(\epsilon y_0^2).\eqlab{curly}
\end{align}
where $\mu_1$ is
\begin{align}
 \mu_1(x_0) = -\partial_x \phi^\epsilon_0(x_0) \Lambda^\epsilon(x_0).\eqlab{mu1}
\end{align}
Here $\partial_x \phi^\epsilon_0$ times $\Lambda$ is understood column-wise. In the \textsc{SOF} method one is looking for a solution $\phi^\epsilon_0=\phi^\epsilon$ that makes the curly brackets \eqref{curly} vanish:
\begin{align}
 \partial_x \Lambda^\epsilon  \phi^\epsilon +\mu_0-\phi^\epsilon A +\mu_1 = 0.\eqlab{phieqn}
\end{align}
As with the \textsc{SO} method this is then approached iteratively, letting first $\phi^\epsilon_0(x)\approx \mu_0(x) A(x)^{-1}=\mathcal O(\gamma_0)$ solve the linear equation 
\begin{align}
 \partial_x \Lambda^\epsilon   \phi^\epsilon_0+\mu_0-\phi^\epsilon_0 A = 0.\eqlab{phi0}
\end{align}
Then the new error is 
\begin{align*}
 \gamma_1\equiv \Vert \mu_1\Vert=\mathcal O(\epsilon \gamma_0).
\end{align*}
 using that $\Lambda^\epsilon=\mathcal O(\epsilon)$ cf. \eqref{Lambda} in \eqref{mu1}. This error is smaller than the previous one $\gamma_0=\mathcal O(\epsilon)$. 
Iterating this procedure one obtains the full \textsc{SOF} method. 
\begin{proposition}
(The \textsc{SOF} method \cite{kri4}) Provided $\epsilon$ is sufficiently small, then the function ${\phi^\epsilon}=\sum_{n=1}^{N(\epsilon)}\phi^\epsilon_n$, $N(\epsilon)=\mathcal O(\epsilon^{-1})$, where 
the $\phi^\epsilon_n$'s satisfy the linear equations
\begin{align*}
 \partial_x \Lambda^\epsilon &\phi^\epsilon_n +\mu_{n}-\phi^\epsilon_n A= 0,\eqlab{SOFEqns}\\
 \mu_n &=-\partial_x \phi^\epsilon_{n-1} \Lambda^\epsilon,\nonumber\\
 \phi^\epsilon_{-1}&\equiv 0,\nonumber
\end{align*}
with $\Lambda^\epsilon$ and $A$ as in \eqsref{Lambda}{A}, respectively,  
solves \eqref{phieqn} up to exponentially small error
\begin{align}
 \partial_x \Lambda^\epsilon  \phi^\epsilon +\mu_0-\phi^\epsilon A y_0 -\partial_x \phi^\epsilon \Lambda^\epsilon = \mathcal O(e^{-c/\epsilon}),
\end{align}
for some constant $c>0$ independent of $\epsilon$.  
\end{proposition}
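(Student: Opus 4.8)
The plan is to mirror the optimal-truncation argument used in \propref{modSOprop} for the \textsc{SO} method, now applied to the fiber-straightening equation \eqref{phieqn} rather than the invariance equation. The structure is identical: each iterate $\phi^\epsilon_n$ is obtained by solving a linear (Sylvester-type) equation, the resulting error is smaller than the previous one by a factor $\mathcal O(\epsilon)$, and the number of admissible iterations is limited by the shrinking domain of analyticity. Choosing the per-step domain loss proportional to $\epsilon$ will permit $N(\epsilon)=\mathcal O(\epsilon^{-1})$ steps, after which the residual in \eqref{phieqn} is exponentially small.

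First I would fix the linear step. At stage $n$ one must solve
\begin{align*}
 \partial_x \Lambda^\epsilon\,\phi^\epsilon_n - \phi^\epsilon_n A = -\mu_n
\end{align*}
for $\phi^\epsilon_n\in\mathbb R^{n_s\times n_f}$. Viewing the left-hand side as a linear operator $L(\phi)=\partial_x\Lambda^\epsilon\,\phi-\phi A$ on matrices, I would note that the unperturbed operator $\phi\mapsto-\phi A$ is boundedly invertible because $A$ is invertible with $\Vert A^{-1}\Vert\ll\epsilon^{-1}$ under \eqref{fastcond}; since $\Lambda^\epsilon=\mathcal O(\epsilon)$ by \eqref{Lambda}, a Cauchy estimate gives $\Vert\partial_x\Lambda^\epsilon\Vert=\mathcal O(\epsilon)$, so $L$ is a small perturbation of an invertible operator and is itself invertible for $\epsilon$ small. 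This yields both solvability and the leading-order estimate $\phi^\epsilon_n\approx\mu_n A^{-1}$, hence $\Vert\phi^\epsilon_n\Vert=\mathcal O(\gamma_n)$ with $\gamma_n\equiv\Vert\mu_n\Vert$.

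Next I would track the error. By definition $\mu_{n+1}=-\partial_x\phi^\epsilon_n\,\Lambda^\epsilon$, so applying the Cauchy estimate \eqref{cauchy} to bound $\partial_x\phi^\epsilon_n$ on a $\chi$-neighborhood of the slow variable shrunk by $\xi_n$, and using $\Lambda^\epsilon=\mathcal O(\epsilon)$, gives
\begin{align*}
 \gamma_{n+1}\le c_n\,\epsilon\,\xi_n^{-1}\gamma_n,\qquad \chi_{n+1}=\chi_n-\xi_n,
\end{align*}
exactly as in the \textsc{SO} estimate. Choosing $\xi_n=2c_n\epsilon$ then forces $\gamma_{n+1}\le\tfrac12\gamma_n$, so the errors decay geometrically.

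The main obstacle is the same bookkeeping that controls \propref{modSOprop}: one must show that the constants $c_n$ stay uniformly bounded, $c_n\le 2c_0$ say, so that the domain losses $\xi_n=\mathcal O(\epsilon)$ can be summed and $N(\epsilon)=\mathcal O(\epsilon^{-1})$ steps taken before the complex neighborhood of the slow variable is exhausted. The uniform bound on $c_n$ requires controlling how $A$, $\Lambda^\epsilon$ and the quadratic remainders $T,R$ transform under the successive near-identity maps $x=x_0+\phi^\epsilon_n(x_0)y_0$; since each $\phi^\epsilon_n=\mathcal O(\gamma_n)$ and the $\gamma_n$ are geometrically small, these transformations perturb the relevant operators only by $\mathcal O(\epsilon)$, keeping $A^{-1}$ boundedly invertible and $\partial_x\Lambda^\epsilon$ of size $\mathcal O(\epsilon)$ throughout. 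Granting this, summing $\xi_n$ over $n\le N(\epsilon)=\mathcal O(\epsilon^{-1})$ keeps the domain nonempty while $\gamma_{N}\le 2^{-N}\gamma_0=\mathcal O(e^{-c/\epsilon})$, which is precisely the claimed residual. The full details, including the Fenichel-type coordinates underlying the iteration, are in \cite{kri4}.
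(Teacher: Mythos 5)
Your proposal is correct and follows essentially the same route as the paper: the paper gives no standalone proof of this proposition (it cites \cite{kri4}), but the one-step estimate $\gamma_1=\mathcal O(\epsilon\gamma_0)$ sketched just before the statement, combined with the Neishtadt-type optimal-truncation argument used in the proof of \propref{modSOprop} (Cauchy estimates on domains shrunk by $\xi_n=\mathcal O(\epsilon)$, geometric decay $\gamma_{n+1}\le\tfrac12\gamma_n$, and $N(\epsilon)=\mathcal O(\epsilon^{-1})$ steps before the analyticity domain is exhausted), is exactly the argument you reconstruct. Your additional observation that the linear step is solvable because $\phi\mapsto-\phi A$ is boundedly invertible and $\partial_x\Lambda^\epsilon=\mathcal O(\epsilon)$ is a small perturbation matches the paper's implicit use of $\phi^\epsilon_n\approx\mu_n A^{-1}$.
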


As for the modified \textsc{SO} method it also here suffices to replace $A=A_0+a$ in \eqref{phi0} by $A_0$ since $\Vert a\Vert = \mathcal O(\epsilon\delta_0)$ is small and can therefore along with $\partial_x \Lambda^\epsilon \phi^\epsilon_n$ be combined into the error at the following step $\mu_{n+1} = -\partial_x \phi^\epsilon_{n} \Lambda^\epsilon+\partial_x \Lambda^\epsilon  \phi^\epsilon_n-\phi^\epsilon_n a$. Indeed, the last two error terms are by Cauchy estimates subordinate to the first error and the exponential estimates can therefore also be obtained in this case.
\begin{proposition}
(The modified \textsc{SOF} method) Provided $\epsilon$ is sufficiently small, then the function ${\phi^\epsilon}=\phi^\epsilon_0+\sum_{n=1}^{N(\epsilon)}\phi^\epsilon_n$, $N(\epsilon)=\mathcal O(\epsilon^{-1})$, where 
\begin{align}
\phi^\epsilon_n&=\mu_n A_0^{-1},\eqlab{SOFphin}\\
  \mu_n &=-\partial_x \phi^\epsilon_{n-1}\Lambda^\epsilon
+\partial_x \Lambda^\epsilon \phi^\epsilon_{n-1}-\epsilon \phi^\epsilon_{n-1} a,\nonumber\\
 a(x) &= -\partial_x {\eta}\partial_y X^\epsilon(x,\eta)+\partial_y Y_0(x,\eta)-A_0(x),\nonumber
 \end{align}
 for $n\ge 1$ and $A_0$ and $\mu_0$ as in \eqsref{A0}{mu0} respectively, solves \eqref{phieqn} up to exponentially small error:
\begin{align}
 \partial_x \Lambda^\epsilon \phi^\epsilon +\mu_0-\phi^\epsilon A y_0 -\partial_x \phi^\epsilon \Lambda^\epsilon = \mathcal O(e^{-c/\epsilon}),
\end{align}
for some constant $c>0$ independent of $\epsilon$.  
\end{proposition}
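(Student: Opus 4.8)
The plan is to run the same Neishtadt-type iteration used in the proof of \propref{modSOprop}, now on the slow-variable transformation rather than the fast one: I would show that the residual of \eqref{phieqn} contracts by a factor $\mathcal O(\epsilon)$ at each step while the complex analyticity domain is eroded by only $\mathcal O(\epsilon)$, so that $N(\epsilon)=\mathcal O(\epsilon^{-1})$ steps can be taken before the domain collapses, leaving an exponentially small residual. First I would pin down the bookkeeping role of $\mu_n$. Writing $A=A_0+a$ with $a$ as in \eqref{SOFphin}, the modified choice $\phi^\epsilon_n=\mu_n A_0^{-1}$ solves $\phi^\epsilon_n A_0=\mu_n$ exactly but, unlike the unmodified scheme of \eqref{phi0}, leaves the discrepancy $\phi^\epsilon_n(A-A_0)=\phi^\epsilon_n a$ together with $\partial_x\Lambda^\epsilon\phi^\epsilon_n$ uncancelled. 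These are exactly the two terms that, alongside the unmodified error $-\partial_x\phi^\epsilon_n\Lambda^\epsilon$, are folded into the next residual $\mu_{n+1}$ by the recursion in \eqref{SOFphin}. Since $\Lambda^\epsilon=\mathcal O(\epsilon)$ by \eqref{Lambda} and $\Vert a\Vert=\mathcal O(\epsilon\delta_0)$, all three folded contributions are small, and this smallness is precisely the gain driving the iteration.

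The second step is a telescoping identity. Using $\phi^\epsilon_k A_0=\mu_k$ for every $k\ge 0$ (including $\phi^\epsilon_0=\mu_0 A_0^{-1}$, the modified version of \eqref{phi0}) and substituting the recursion for $\mu_k$ into the full residual
\begin{align*}
 E^\epsilon \equiv \partial_x\Lambda^\epsilon\phi^\epsilon + \mu_0 - \phi^\epsilon A - \partial_x\phi^\epsilon\Lambda^\epsilon,\qquad \phi^\epsilon=\sum_{k=0}^{N(\epsilon)}\phi^\epsilon_k,
\end{align*}
all intermediate contributions cancel in consecutive pairs and only the top-index terms survive, giving $E^\epsilon=\mu_{N(\epsilon)+1}$ up to a term of order $\Vert a\Vert\,\Vert\phi^\epsilon_{N(\epsilon)}\Vert$ of the same (exponentially small) size. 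This reduces the entire statement to a one-sided decay estimate for $\mu_{N(\epsilon)+1}$; the summation itself is routine, but this identity is the conceptual core, as it converts the claim into a contraction problem.

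For the quantitative decay I would introduce nested neighborhoods $\mathcal U+i(\chi-\sigma_n)$ with $\sigma_n=\sum_{k=1}^n\xi_k$ and estimate $\mu_n$ on the $n$-th one via the Cauchy estimate \eqref{cauchy}. The crucial observation is that $\Lambda^\epsilon$, $A_0^{-1}$ and $a$ are fixed data, determined before the $\phi$-iteration begins, so their derivatives do not degrade across steps; the only freshly constructed object differentiated at step $n$ is $\phi^\epsilon_{n-1}$. Hence the dominant term is $-\partial_x\phi^\epsilon_{n-1}\Lambda^\epsilon$, for which Cauchy gives $\Vert\partial_x\phi^\epsilon_{n-1}\Vert\le\xi_n^{-1}\Vert\phi^\epsilon_{n-1}\Vert$ and, with $\Vert\Lambda^\epsilon\Vert=\mathcal O(\epsilon)$ and $\Vert\phi^\epsilon_{n-1}\Vert=\mathcal O(\Vert A_0^{-1}\Vert\,\Vert\mu_{n-1}\Vert)$, a bound $\mathcal O(\epsilon\xi_n^{-1})\Vert\mu_{n-1}\Vert$. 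The remaining terms $\partial_x\Lambda^\epsilon\phi^\epsilon_{n-1}$ and $\epsilon\phi^\epsilon_{n-1}a$ carry an extra factor $\epsilon$ (respectively $\Vert a\Vert=\mathcal O(\epsilon\delta_0)$) and no $\xi_n^{-1}$ amplification, so they are subordinate. Choosing $\xi_n$ proportional to $\epsilon$ turns $\epsilon\xi_n^{-1}$ into a fixed small constant, yielding $\Vert\mu_n\Vert\le\tfrac12\Vert\mu_{n-1}\Vert$ exactly as in \propref{modSOprop}.

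The hard part, as in the modified \textsc{SO} method, will be the uniform control of the Cauchy constants $c_n$ so that all step sizes $\xi_n$ may be taken of a single order $\mathcal O(\epsilon)$: one must verify that $A_0^{-1}$, the perturbation $a$, and the norms feeding the Cauchy estimates stay uniformly bounded throughout, i.e. that the bound $c_n\le 2c_0$ from \propref{modSOprop} carries over verbatim. Granting this, $\sum_{n=1}^{N(\epsilon)}\xi_n=\mathcal O(N(\epsilon)\epsilon)$ remains below $\chi$ precisely when $N(\epsilon)=\mathcal O(\epsilon^{-1})$, so the analyticity strip survives all iterations and the geometric contraction gives $\Vert\mu_{N(\epsilon)+1}\Vert\le 2^{-N(\epsilon)}\Vert\mu_0\Vert=\mathcal O(e^{-c/\epsilon})$. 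Combined with the telescoping identity $E^\epsilon=\mu_{N(\epsilon)+1}$, this is the asserted exponential bound.
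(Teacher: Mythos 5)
Your proposal is correct and follows essentially the same route as the paper: the paper's own (sketched) argument likewise folds the uncancelled terms $\partial_x\Lambda^\epsilon\phi^\epsilon_n$ and $-\phi^\epsilon_n a$ into the next error $\mu_{n+1}$, notes via Cauchy estimates that these are subordinate to the dominant term $-\partial_x\phi^\epsilon_n\Lambda^\epsilon$, and then invokes the Neishtadt-type contraction of \propref{modSOprop} (step sizes $\xi_n\sim\epsilon$, $N(\epsilon)=\mathcal O(\epsilon^{-1})$ steps before the analyticity domain is exhausted) to get the exponentially small residual. Your telescoping identity and domain-erosion bookkeeping simply make explicit what the paper compresses into a reference to that earlier proof.
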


%

Geometrically, the function $\phi^\epsilon$ gives through 
 \begin{eqnarray}
 \textnormal{Rg}\,\left(
\begin{pmatrix}
 {\phi^\epsilon}(x)\\
I_f+\partial_x {\eta}(x){\phi^\epsilon}(x)
\end{pmatrix}+\mathcal O(e^{-c_2/\epsilon})\right),\eqlab{tsp}
\end{eqnarray}
an exponentially accurate approximation of the tangent spaces to the fibers at $y=\eta(x)$ \cite{kri4}. Here $I_f=\text{identity}\in \mathbb R^{n_f\times
n_f}$.

The following section combines the two iterative methods to obtain the \textsc{SO}-\textsc{SMST} method for the approximation of trajectories near a saddle-type slow manifold.
\section{The \textsc{SO}-\textsc{SMST}  method}\seclab{nonstiff}
The outcome of \textsc{SO} and \textsc{SOF} are the functions $\eta$ and $\phi^\epsilon$ respectively. The properties of these functions are such that if the following transformation:
\begin{align}
 (x_0,y_0) \mapsto (x=x_0+\phi^\epsilon(x_0)y_0,y=\eta(x)+y_0),\eqlab{sofTransformation}
\end{align}
is applied to \eqref{ss} then one obtains the following equations of motion
\begin{align}
 \dot x_0 &=\Lambda^\epsilon (x_0)+\mathcal O(\epsilon y_0^2),\eqlab{x0eqn}\\
 \dot y_0 &=A(x_0) y_0+\mathcal O(y_0^2).\nonumber
\end{align}
Recall that $\Lambda^\epsilon(x_0)=X^\epsilon(x_0,\eta(x_0))=\mathcal O(\epsilon)$ cf. \eqref{Lambda}. The case where $X^\epsilon=\epsilon X$ will again be the primary focus. 
In \eqref{x0eqn} the exponentially small terms have been ignored. The remainder $\mathcal O(\epsilon y_0^2)$ in \eqref{x0eqn} shall also be ignored and we will here just consider
\begin{align}
 \dot x_0= \Lambda^\epsilon (x_0),\eqlab{x0eqn2}
\end{align}
or
\begin{align}
 x_0'=\Lambda(x_0),\eqlab{x0eqn3}
\end{align}
in terms of the slow time $\tau=\epsilon t$ and where $\Lambda(x_0) = X(x_0,\eta(x_0))$ . This formally decouples the slow variables from the fast ones. Cf. \eqref{tsp} it corresponds to projecting along the tangent space of the fibers based at $(x,\eta(x))$. A simpler but less accurate approach to obtain a formal decoupling of the equations would be to base the projection on \eqref{fs}$_{\epsilon=0}$ and the tangent spaces of the fibers at $\epsilon=0$. This corresponds to ignoring $\phi^\epsilon$ above in the transformation above and instead just consider 
\begin{align}
&(x_0,y_0) \mapsto (x=x_0,y=\eta(x)+y_0) \text{ and decouple the } \eqlab{Naive}\\
&\text{equations by ignoring a remainder of the form $\mathcal O(\epsilon y_0)$}.\nonumber
\end{align}
This approach is, as highlighted by the orders $\mathcal O(\epsilon y_0^2)$ and $\mathcal O(\epsilon y_0)$ in \eqref{x0eqn} and \eqref{Naive} respectively, less accurate. It assumes that the fibers are vertical. See also Fig. 4.1 in \cite{kri4}. The approach \eqref{Naive} is therefore particularly inaccurate in comparison with \eqref{sofTransformation} if the slow and fast variables have not been properly identified. The error in \eqref{Naive} is then $\mathcal O(y_0)$ rather than $\mathcal O(\epsilon y_0)$. See \cite[Section 8]{kri4} and \secref{Lindemann} below.

The error from replacing \eqref{x0eqn} with \eqref{x0eqn2} will be further quantified in \appref{Error}. However, within this approximation, the fast variables can be solved for using
\begin{align}
\dot y &=Y(x,y),\eqlab{y0eqn2}\\
x &= x_0+\phi^\epsilon(x_0)(I_f+\partial_x \eta(x_0) \phi^\epsilon(x_0))^{-1} (y-\eta(x_0)),\eqlab{xx0}
\end{align}
which is a non-autonomous system once $x_0=x_0(\tau)$ has been obtained from \eqref{x0eqn3}. 
The equation for $x=x(x_0,y)$ \eqref{xx0} has been obtained by inserting $x=x_0+\phi^\epsilon(x_0)y_0$ into $y_0=y-\eta(x)$ and Taylor expanding about $y_0=0$. This introduces an error of $\mathcal O(\epsilon^3 y_0^2)$ which is subordinate to the remainder $\mathcal O(\epsilon y_0^2)$ which was ignored in \eqref{x0eqn}.

It will also be useful to invert $x=x_0+\phi^\epsilon(x_0)y_0$ for $x_0$ (see \secref{fhn}). By Taylor expansion about $y_0=0$ the following approximation
\begin{align}
 x_0 = x-\phi^\epsilon(x) y_0+\mathcal O(\epsilon^2 y_0^2), \eqlab{xx0invert}
\end{align}
is obtained where $\phi^\epsilon=\mathcal O(\epsilon)$.

The main purpose of this paper, is to use this principle near a saddle type slow manifold to construct the type of trajectories that are computed by the \textsc{SMST} algorithm. Consider e.g. a base trajectory $x_0=x_0(\tau)$ solving \eqref{x0eqn3} with $x_{0}(0)=x_{00}$ and $x_{0}(T)=x_{0T}$. This will be obtained by applying a quadrature to \eqref{x0eqn3}. We will return to this in \secref{rungeKutta}. The aim is then to compute an approximation of a trajectory connecting to such base trajectory, in the sense that it decays to the base trajectory exponentially fast at one end and escapes from it exponentially fast at the other end. This is done as it is done in the \textsc{SMST} algorithm \cite{guc3} by specifying the stable components $y_{s0}=\pi_s^{z}(y-\eta(x(0)))$ at $t=0$ and unstable ones $y_{uT}=\pi_u^{z}(y-\eta(x(T)))$ at the other end $t=T/\epsilon$. In particular, the approximation
$$ y-\eta(x)=(I_f+\partial_x \eta(x_0) \phi^\epsilon(x_0))^{-1} (y-\eta(x_{0}))+\mathcal O(\epsilon^2 y_0^2),$$
also used in \eqref{xx0}, is used to write these components as 
\begin{align*}
 y_{s0}=\pi_s^z(I_f+\partial_x \eta(x_{00}) \phi^\epsilon(x_{00}))^{-1} (y-\eta(x_{00})),
\end{align*}
and
\begin{align*}
 y_{uT}=\pi_u^z(I_f+\partial_x \eta(x_{0T}) \phi^\epsilon(x_{0T}))^{-1} (y-\eta(x_{0T})),
\end{align*}
respectively. Recall here the definitions of $\pi_{s,u}^z$ in \eqref{pis}. 
In contrast to the \textsc{SMST}  algorithm, however, collocation is only performed on the fast $y$-space as the base trajectory $x_0=x_0(\tau)$ solving \eqref{x0eqn3} has been obtained by direct integration. Moreover, one only needs to consider time intervals of order $t=\mathcal O(1)$ in each end. This means that the vector-field in this collocation problem has no $\epsilon^{-1}$ factor and hence the Jacobian will be well-conditioned. The length of the time intervals can be estimated through the eigenvalues of $\partial_y Y(x,\eta(x))$. Suppose that $r_0=\Vert y_{s0}\Vert$ is small and that $\lambda_s>0$ is a lower estimate of the absolute values of the real parts of the eigenvalues of $\partial_y Y(x,\eta(x))$ with negative real parts, then
\begin{align*}
 t_0 = - \lambda_s^{-1} \log\left(\frac{\mbox{tol}}{r_0}\right),
\end{align*}
is an estimate for how long it takes $y_0$ to decrease below a given tolerance $\mbox{tol}$ ($\ll \epsilon r_0^2$ cf. \eqref{epsr2} below). At $t=t_0$ we then enforce the condition that the ``unstable components'' of $y=y(t_0)$ vanish. That is $$\pi_u^z(y-\eta(x_0(\epsilon t_0)))=0.$$  
At the other end, we then let $r_T=\Vert y_{uT}\Vert$ and suppose that $\lambda_u>0$ is a lower estimate of the real parts of the eigenvalues of $\partial_y Y(x,\eta(x))$ with positive real part. Then 
\begin{align*}
 t_1 = - \lambda_u^{-1} \log\left(\frac{\mbox{tol}}{r_T}\right),
\end{align*}
is an estimate for how long it takes $y_0$ to decrease below, now in backward time, a given tolerance $\mbox{tol}$ ($\ll \epsilon r_T^2$). At $t=T/\epsilon-t_1$ one therefore enforce the condition that the ``stable components'' of $y=y(T/\epsilon-t_1)$ vanish. That is $$\pi_s^{z}(y-\eta(x_0(T-\epsilon t_1))).$$ This defines two boundary value problems on the fast space. They are solved by the same collocation principle as used in the \textsc{SMST}  algorithm on the full space using divisions of the fast time intervals $t\in [0,t_0]$ and $t\in [T/\epsilon-t_1,T/\epsilon]$ by a time step $\Delta t$. For $t\in (t_0,T/\epsilon-t_1)$, one sets $y(t) = \eta(x_0(t))$, that is $y_0=0$. Finally, $x=x(\tau)$ is obtained from \eqref{xx0}. 

\begin{remark}\remlab{eps3}
 A consequence of the analysis in \appref{Error} (see also \remref{eps3app}) is that for the trajectories computed in \cite{guc3} where the stable and unstable components are taken from the critical manifold, setting \eqref{scomp}$_{\epsilon=0}$ and \eqref{ucomp}$_{\epsilon=0}$, respectively, to $0$, the error of the \textsc{SO-SMST} method is of order $\mathcal O(\epsilon^3)$. 
\end{remark}
%



%
%

\section{Numerical implementation of the iterative methods}\seclab{numsom}
If the non-linear equation for the critical manifold $y=\eta_0(x)$ can be solved explicitly, then \textsc{SO} and \textsc{SOF} can be implemented into a computer algebra system (CAS), such as Maple or Mathematica, to obtain very accurate closed-form approximations of the slow manifold and the tangent spaces to the fibers. There are other methods that could also be used to achieve this. If an explicit small parameter $\epsilon$ can be identified, then such accurate closed-form approximations are even obtainable using direct asymptotic expansions. But whereas closed-form approximations could potentially be useful in some specific cases, they have clear disadvantages in general. Firstly, the number of terms to include to obtain a desired accuracy depends in a non-trivial way on the position in phase space. Secondly, the expressions are typically very lengthy and just the evaluation of such expressions will involve many operations, which if combined with numerical integration could be costly. Finally, it is highly inflexible: If the model is slightly modified then one needs to redo the CAS-computations. 

A numerical implementation of the \textsc{SO} and \textsc{SOF} methods circumvents the highlighted issues of a CAS implementation. The only obstacle is the fact that one needs to approximate derivatives of the approximations: see $\partial_x \eta_{n-1}$ in \eqref{etan} and $\partial_x \phi^\epsilon_{n-1}$ in \eqref{SOFphin}, to obtain improved approximations. For this, the differential operator $\partial_x$ that appears in these expressions can be replaced by a finite difference operator $\delta_x^h$ satisfying
\begin{align}
 (\partial_x - \delta_x^h) f = \mathcal O(h^p),\eqlab{cond1}
\end{align}
for all smooth $f$. As an example, one could take 
\begin{align}
 \delta_{x_i}^h f(x) = \frac{f(x+he_i)-f(x-he_i)}{2h},\,i=1,\ldots,n_s,\eqlab{dxhns1}
\end{align}
with $(e_i)_j=\delta_{ij}$, $\delta_{ij}$ being Kronecker's delta, and set
\begin{align*}
 \delta_x^h f = (\delta_{x_1}^h f\,\cdots\, \delta_{x_{n_s}}^h f).
\end{align*}
Then $p=2$ since:
\begin{align}
 \Vert (\partial_{x_i} -\delta_{x_i}^h) f(x)\Vert \le \frac{\sup_{t\in [0,1]} \Vert \partial_{x_i}^3 f(x+th e_i)\Vert }{6}h^2. \eqlab{fd2}
\end{align}
Cauchy-type estimates also apply to $\delta_x^h$ in the sense that
\begin{align}
 \Vert \delta_x^h f\Vert_{\chi-\xi}\le \frac{C_{p}\Vert f \Vert_{\chi}}{\xi},\quad C_p\ge 1,\eqlab{cond2}
 \end{align}
 provided $h<\xi$ is sufficiently small and that $f$ is analytic. For \eqref{dxhns1}, for example, with $h\le \xi/2$ we have
 \begin{align*}
 \Vert \delta_{x_i}^h f\Vert_{\chi-\xi} &\le \frac{3\Vert f\Vert_\chi}{\xi},
 \end{align*}
 using \eqref{fd2} and a Cauchy estimate of $\Vert\partial_{x_i}^3 f\Vert_{\chi-\xi+h}$.
Therefore
 \begin{align*}
  \Vert \delta_x^h f\Vert_{\chi-\xi}\le \frac{3n_s \Vert f\Vert_\chi}{\xi},
 \end{align*}
 and $C_p=3n_s>1$ in this case. The discretized version of the invariance equation
 \begin{align}
  -\epsilon \delta_x^h \eta^h X(x,\eta^h)+Y(x,\eta^h)=0,\eqlab{dsom}
 \end{align}
can then be solved by the \textsc{SO} principle to obtain an approximate solution $\eta^h$ with exponential small error:
\begin{align}
  -\epsilon \delta_x^h \eta^h(x) X(x,\eta^h(x))+Y(x,\eta^h(x))=\mathcal O(e^{-c_p/\epsilon}).\eqlab{dxheta}
 \end{align}
 Here $c_p$ is independent of $\epsilon$ and $h$.
 \begin{proposition}\proplab{etah}
  Consider a finite difference operator $\delta_x^h$ satisfying \eqsref{cond1}{cond2}. Provided $\epsilon$ is sufficiently small, then applying the \textsc{SO} method to \eqref{dsom} gives an approximate solution $\eta^h$ that satisfies
  \begin{align}
   -\partial_x \eta^h X^\epsilon(x,\eta^h(x)) + Y(x,\eta^h(x)) = \mathcal O(\epsilon \Vert (\partial_x -\delta_x^h) \eta^h \Vert+e^{-c_p/\epsilon})=\mathcal O(\epsilon h^p+e^{-c_p/\epsilon}).\eqlab{dsomr}
  \end{align}
 \end{proposition}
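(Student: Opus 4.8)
The plan is to observe that the discretized invariance equation \eqref{dsom} is, structurally, nothing but the original invariance equation \eqref{inveqn} with the differential operator $\partial_x$ replaced by the bounded linear operator $\delta_x^h$, and then to run the modified \textsc{SO} method of \propref{modSOprop} verbatim on this modified equation. The only places where $\partial_x$ enters the proof of \propref{modSOprop} are (i) through linearity, in the definition of the corrections $\eta_n$ and the residuals $\rho_{n-1}$, and (ii) through the Cauchy estimate \eqref{cauchy}, which is what forces each new residual to be $\mathcal O(\epsilon)$ times the previous one. Since $\delta_x^h$ is linear and, by hypothesis \eqref{cond2}, satisfies a Cauchy-type bound of exactly the same form (with an $\mathcal O(1)$ constant $C_p\ge 1$ in place of $1$), the Neishtadt iteration carries over. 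This produces an analytic $\eta^h$, given by a sum of $N(\epsilon)=\mathcal O(\epsilon^{-1})$ terms, whose discretized residual is exponentially small, i.e.\ \eqref{dxheta} holds:
\begin{align*}
 -\epsilon\, \delta_x^h \eta^h(x)\, X(x,\eta^h(x)) + Y(x,\eta^h(x)) = \mathcal O(e^{-c_p/\epsilon}).
\end{align*}

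With \eqref{dxheta} established, the passage to the \emph{continuous} residual is a one-line add-and-subtract. Using $X^\epsilon=\epsilon X$ I would write
\begin{align*}
 -\partial_x \eta^h X^\epsilon(x,\eta^h) + Y(x,\eta^h) = \Big(-\epsilon\, \delta_x^h \eta^h\, X(x,\eta^h) + Y(x,\eta^h)\Big) - \epsilon\,(\partial_x-\delta_x^h)\eta^h\, X(x,\eta^h).
\end{align*}
The first bracket is $\mathcal O(e^{-c_p/\epsilon})$ by the previous step, while the second term is bounded by $\epsilon\,\Vert (\partial_x-\delta_x^h)\eta^h\Vert$ times the supremum of $X$, which is finite since $X$ is analytic on a compact set. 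This already yields the first equality in the statement.

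For the second equality I would invoke the approximation property \eqref{cond1} of the finite difference operator, applied to $\eta^h$. The point to check is that the implied constant is uniform in $\epsilon$: the bound in \eqref{cond1} depends on higher derivatives of $\eta^h$, but $\eta^h$ is analytic on a fixed complex neighborhood of $\mathcal U$ and satisfies $\eta^h-\zeta_0=\mathcal O(\delta_0)$, so it is a bounded analytic function whose derivatives are controlled there by Cauchy estimates, uniformly in $\epsilon$. Hence $\Vert(\partial_x-\delta_x^h)\eta^h\Vert=\mathcal O(h^p)$ with an $\epsilon$-independent constant, and substituting into the first equality gives the claimed $\mathcal O(\epsilon h^p + e^{-c_p/\epsilon})$.

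The main obstacle is the first step: confirming that the entire Neishtadt--Cauchy scheme of \propref{modSOprop} genuinely survives the replacement $\partial_x\mapsto\delta_x^h$. The delicate part is that the constant $C_p\ge 1$ in \eqref{cond2} rescales the domain losses $\xi_n$ incurred at each iterate; one must verify that this only changes them by an $\mathcal O(1)$ factor, so that the number of steps before the analyticity strip is exhausted remains $\mathcal O(\epsilon^{-1})$ and the exponential estimate is preserved. One must also confirm that each iterate stays analytic---which it does, being assembled from $A_0^{-1}$, $X^\epsilon$, $Y$ and the shift-and-rescale operator $\delta_x^h$, all of which preserve analyticity---so that \eqref{cond1} and \eqref{cond2} are legitimately applicable throughout.
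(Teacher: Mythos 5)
Your proposal is correct and follows essentially the same route as the paper: the paper's proof is exactly your add-and-subtract step, writing $\partial_x \eta^h = \delta_x^h\eta^h + (\partial_x-\delta_x^h)\eta^h$ in \eqref{dsomr} and invoking \eqsref{cond1}{dxheta}, with \eqref{dxheta} itself justified (as you do) by running the \textsc{SO}/Neishtadt iteration with $\delta_x^h$ in place of $\partial_x$, the Cauchy-type bound \eqref{cond2} playing the role of \eqref{cauchy}. Your more careful verification that the constant $C_p\ge 1$ only rescales the domain losses by an $\mathcal O(1)$ factor is a detail the paper leaves implicit, but it is the same argument.
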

 \begin{proof}
We write
\begin{align*}
 \partial_x \eta^h = \delta_x^h \eta^h-(\partial_x^h -\delta_x^h )\eta^h
\end{align*}
in \eqref{dsomr} and use \eqsref{cond1}{dxheta} to estimate the error.
 \end{proof}

 From $Y(x,\eta_0(x))=0$ one can obtain $\partial_x \eta_0 = -(\partial_y Y)^{-1} \partial_x Y$ in the first step of the iteration. The error from replacing $\partial_x$ with $\delta_x^h$ does then not appear before the second step. This gives rise to the improved order $\mathcal O(\epsilon^2 h^p)$ in \eqref{dsomr}. 
 \begin{corollary}\corlab{etah2}
 Suppose $\eta_0=\eta_0(x)$ is known. Then, provided $\epsilon$ is sufficiently small, applying the following procedure:
 \begin{align}
  \eta_n^h &= -A_{0}^{-1}\rho_{n-1},\eqlab{etanh}\\
  \rho_{n-1}&=-(\partial_x \eta_0(x)+\delta_x^h (\eta^{n-1,h}(x)-\eta_0(x))) X^\epsilon(x,\eta^{n-1,h})+Y(x,\eta^{n-1,h}),\eqlab{split}\\
  A_{0}(x) &=-\partial_x \eta_0\partial_y X^\epsilon (x,\eta_0)+\partial_y Y(x,\eta_0),\nonumber\\
    \eta^{n,h}&=\eta_0+\sum_{k=1}^{n}\eta_k^{h},\nonumber
  \end{align}
  generates an approximate solution $\eta^{h} = \eta_0+\sum_{n=1}^{N(\epsilon)}\eta_n^{h}$, $N(\epsilon)= \mathcal O(\epsilon^{-1})$, satisfying
  \begin{align}
   -\partial_x \eta^{h} X^\epsilon(x,\eta^{h}) + Y(x,\eta^{h}) = \mathcal O(\epsilon \Vert (\partial_x -\delta_x^h) (\eta^h-\eta_0) \Vert+e^{-c_p/\epsilon})=\mathcal O(\epsilon^2 h^p +e^{-c_p/\epsilon}).\eqlab{dsomr2}
  \end{align}
    The derivative $\partial_x \eta_0$ is obtained through $Y(x,\eta_0)=0$:
  \begin{align*}
  \partial_x \eta_0&=-(\partial_y Y(x,\eta_0))^{-1} \partial_x Y(x,\eta_0).\nonumber
  \end{align*}
 \end{corollary}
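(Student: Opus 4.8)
The plan is to treat this as a refinement of \propref{etah}, exploiting the single structural change in \eqref{etanh}--\eqref{split}: the exact derivative $\partial_x\eta_0$ is used for the critical-manifold part, while the finite-difference operator $\delta_x^h$ is applied only to the correction $\eta^{n-1,h}-\eta_0$. Since $\eta_0$ is known and $Y(x,\eta_0)=0$, implicit differentiation gives $\partial_x\eta_0=-(\partial_y Y(x,\eta_0))^{-1}\partial_x Y(x,\eta_0)$ exactly, so no $\mathcal O(h^p)$ defect is ever incurred from $\eta_0$; the only finite-difference error originates from $\eta^{h}-\eta_0$, which the iteration keeps at size $\mathcal O(\epsilon)$. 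This extra factor of $\epsilon$ is precisely what upgrades the $\mathcal O(\epsilon h^p)$ bound of \propref{etah} to $\mathcal O(\epsilon^2 h^p)$.

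First I would establish the exponential convergence of the discretized iteration, i.e. that \eqref{etanh}--\eqref{split} drives the residual $\rho_{n-1}$ down to $\mathcal O(e^{-c_p/\epsilon})$ after $N(\epsilon)=\mathcal O(\epsilon^{-1})$ steps. This reuses the Neishtadt-type argument of \propref{modSOprop}: each correction satisfies $\eta_n^h=-A_0^{-1}\rho_{n-1}$ with $\sup_x\|\eta_n^h\|=\mathcal O(\delta_{n-1})$, and after one application of the finite-difference Cauchy estimate \eqref{cond2} the new residual is bounded by $c_n(\epsilon\xi_n^{-1}\delta_{n-1}+\cdots)$; choosing $\xi_n\sim\epsilon$ yields $\delta_n\le\tfrac12\delta_{n-1}$, so one may take $\mathcal O(\epsilon^{-1})$ steps before the analyticity strip is exhausted. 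The only change from the continuous case is that the genuine Cauchy constant is replaced by $C_p$ from \eqref{cond2}, which still obeys the required uniform bounds; this simultaneously certifies that each $\eta_n^h$, and hence $\eta^h-\eta_0=\sum_{n\ge1}\eta_n^h$, is $\mathcal O(\epsilon)$ and analytic on a slightly reduced domain.

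With this in hand I would convert the discrete residual into the continuous invariance residual. Writing
\begin{align*}
 \partial_x\eta^h=\partial_x\eta_0+\delta_x^h(\eta^h-\eta_0)+(\partial_x-\delta_x^h)(\eta^h-\eta_0)
\end{align*}
and substituting into $-\partial_x\eta^h X^\epsilon(x,\eta^h)+Y(x,\eta^h)$, the first two terms reproduce exactly the discrete residual, which is $\mathcal O(e^{-c_p/\epsilon})$ by the previous step, leaving
\begin{align*}
 -\partial_x\eta^h X^\epsilon+Y=-\big((\partial_x-\delta_x^h)(\eta^h-\eta_0)\big)X^\epsilon(x,\eta^h)+\mathcal O(e^{-c_p/\epsilon}).
\end{align*}
Here $X^\epsilon(x,\eta^h)=\mathcal O(\epsilon)$ by \eqref{XepsSmall} together with the $\mathcal O(\epsilon)$-closeness of $\eta^h$ to $\eta_0$, while $(\partial_x-\delta_x^h)(\eta^h-\eta_0)=\mathcal O(\epsilon h^p)$: the consistency bound \eqref{cond1}--\eqref{fd2} gives a factor $h^p$ times higher derivatives of $\eta^h-\eta_0$, and these are $\mathcal O(\epsilon)$ by Cauchy estimates since $\eta^h-\eta_0=\mathcal O(\epsilon)$ is analytic. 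Multiplying the two $\mathcal O(\epsilon)$ factors against $h^p$ yields $\mathcal O(\epsilon^2 h^p)$, and combining with the exponential term gives \eqref{dsomr2}.

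I expect the main obstacle to be the bookkeeping of the second paragraph rather than the final estimate: one must verify that using the split operator (exact on $\eta_0$, finite-difference on the correction) does not disturb the Neishtadt-type contraction, in particular that the finite-difference Cauchy estimate \eqref{cond2} with constant $C_p$ can be substituted for the genuine Cauchy estimate throughout the induction of \propref{modSOprop} while keeping $c_p$ independent of both $\epsilon$ and $h$. Once the iteration is known to converge exponentially and each correction is certified $\mathcal O(\epsilon)$, the gain from $\mathcal O(\epsilon h^p)$ to $\mathcal O(\epsilon^2 h^p)$ reduces to the observation that the finite-difference defect now acts only on the $\mathcal O(\epsilon)$ correction and not on the $\mathcal O(1)$ critical manifold.
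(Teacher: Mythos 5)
Your proposal is correct and follows essentially the same route as the paper: the exponentially small residual is inherited from the Neishtadt-type argument with the finite-difference Cauchy estimate \eqref{cond2} standing in for the genuine one, and the upgrade from $\mathcal O(\epsilon h^p)$ to $\mathcal O(\epsilon^2 h^p)$ is exactly the paper's observation that, because $\partial_x\eta_0$ is available exactly from $Y(x,\eta_0)=0$, the finite-difference defect acts only on the $\mathcal O(\epsilon)$ correction $\eta^h-\eta_0$. The paper compresses this into the remark preceding the corollary together with the splitting used in the proof of \propref{etah}; your write-up simply makes those steps explicit.
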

 Note how $\partial_x \eta^{n-1,h}$ is approximated as $\partial_x \eta_0+\delta_x^h (\eta^{n-1,h}-\eta_0)$ in \eqref{split}. We further stress the simplicity of this method: It only requires the first partial derivatives of the vector-field.

 It is easy to obtain a similar result for the discretization of \textsc{SOF} method:
\begin{proposition}\proplab{sofmod}
Assume that the conditions \eqsref{cond1}{cond2} hold true and let $\eta^h$ be given as in \propref{etah}. Then provided $\epsilon$ is sufficiently small, the function ${\phi}^{\epsilon,h}=\sum_{n=0}^{N(\epsilon)}\phi_n^{\epsilon,h}$, $N(\epsilon)=\mathcal O(\epsilon^{-1})$, where 
\begin{align*}
\phi_n^{\epsilon,h}&=\mu_n A_0^{-1},\\
  \mu_n &=-\delta_x^h \phi_{n-1}^\epsilon X^\epsilon(x,\eta^h)\\
  &+\left(\partial_x X^\epsilon(x,\eta^h)+\partial_y X^\epsilon(x,\eta^h(x)) \delta_{x}^h {\eta}^h(x)\right)\phi_{n-1}^{\epsilon,h}\\
  &-\phi_{n-1}^{\epsilon,h} a,\\
 A_0(x) &= -\partial_{x} {\eta_0} \partial_y X^\epsilon(x,\eta_0)+\partial_y Y_0(x,\eta_0),\\
 a(x) &= -\delta_{x}^h {\eta^h} (x)\partial_y X_0^\epsilon(x,\eta^h(x))+\partial_y Y_0(x,\eta^h)-A_0(x).
 \end{align*}
solves \eqref{phieqn} up to the error 
\begin{align}
\mathcal O(\epsilon \Vert (\partial_x - \delta_x^h)\phi^{\epsilon,h} \Vert +e^{-c_p/\epsilon}).\eqlab{errorPhinh}
\end{align}
\end{proposition}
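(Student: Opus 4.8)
The plan is to mirror the proof of \propref{etah} and the geometric-decay argument of the modified \textsc{SO} method in \propref{modSOprop}. By construction the iteration \eqref{SOFphin} is the discretization of the modified \textsc{SOF} method obtained by replacing every $\partial_x$ with the finite-difference operator $\delta_x^h$ and the slow manifold $\eta$ with its computed approximation $\eta^h$. I would therefore first show that this discrete iteration solves the \emph{discretized} analogue of \eqref{phieqn} --- the one in which $\partial_x\phi^\epsilon$ is replaced by $\delta_x^h\phi^{\epsilon,h}$ and the derivative $\partial_x\eta$ appearing inside $\partial_x\Lambda^\epsilon$ and $A$ is replaced by $\delta_x^h\eta^h$ --- up to an exponentially small residual $\mathcal O(e^{-c_p/\epsilon})$, just as the discrete \textsc{SO} method solves its discrete invariance equation in \eqref{dxheta}. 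I would then convert this discrete identity into the continuous equation \eqref{phieqn} by the device used in \propref{etah}, writing $\partial_x = \delta_x^h + (\partial_x-\delta_x^h)$ and bounding the resulting difference terms.

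For the first part I would reproduce the geometric-decay argument of \propref{modSOprop}, now with $\delta_x^h$ in place of $\partial_x$ and $\eta^h$ in place of $\eta$. Writing $\gamma_n = \Vert\mu_n\Vert_{\nu_n}$ for the per-step error on a complex neighbourhood of width $\nu_n$, the recursion \eqref{SOFphin} sets $\phi^{\epsilon,h}_n = \mu_n A_0^{-1}$, so $\Vert\phi^{\epsilon,h}_n\Vert = \mathcal O(\gamma_n)$, and $\mu_n$ is assembled from $\phi^{\epsilon,h}_{n-1}$ through the three contributions in \eqref{SOFphin}. The decisive one is $-\delta_x^h\phi^{\epsilon,h}_{n-1}\,X^\epsilon(x,\eta^h)$: applying the discrete Cauchy estimate \eqref{cond2} to $\delta_x^h\phi^{\epsilon,h}_{n-1}$ and using $X^\epsilon(x,\eta^h)=\Lambda^\epsilon=\mathcal O(\epsilon)$ cf. \eqref{Lambda} bounds it by $\mathcal O(\epsilon\,\xi_n^{-1}\gamma_{n-1})$ on the shrunken domain $\nu_n = \nu_{n-1}-\xi_n$. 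The remaining two contributions, $(\partial_x X^\epsilon + \partial_y X^\epsilon\,\delta_x^h\eta^h)\phi^{\epsilon,h}_{n-1}$ and $-\phi^{\epsilon,h}_{n-1}a$, carry no inverse power of $\xi_n$ and are $\mathcal O(\epsilon\gamma_{n-1})$ because $\partial_x\Lambda^\epsilon=\mathcal O(\epsilon)$ and $\Vert a\Vert=\mathcal O(\epsilon)$, hence subordinate. Choosing $\xi_n=\mathcal O(\epsilon)$ makes $\epsilon\xi_n^{-1}=\mathcal O(1)$ with a constant that can be tuned below $\tfrac12$, so each step contracts $\gamma_n$ by a fixed factor. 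As in \propref{modSOprop} the constants stay uniformly bounded, the total shrinkage $\sum_n\xi_n$ remains $\mathcal O(1)$, and one affords $N(\epsilon)=\mathcal O(\epsilon^{-1})$ steps; constant-factor contraction over that many steps leaves a discrete residual of size $\mathcal O(e^{-c_p/\epsilon})$.

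For the conversion I would write the continuous residual
\begin{align*}
 E = \partial_x\Lambda^\epsilon\,\phi^{\epsilon,h} + \mu_0 - \phi^{\epsilon,h}A - \partial_x\phi^{\epsilon,h}\,\Lambda^\epsilon,
\end{align*}
with $\Lambda^\epsilon$, $\mu_0$ and $A$ evaluated at the computed $\eta^h$, and substitute $\partial_x\phi^{\epsilon,h} = \delta_x^h\phi^{\epsilon,h} + (\partial_x-\delta_x^h)\phi^{\epsilon,h}$ together with the analogous split of the $\partial_x\eta^h$ factors hidden in $\partial_x\Lambda^\epsilon$ and in $A$. The pieces assembled purely with $\delta_x^h$ reconstitute exactly the discrete residual of the previous paragraph and contribute $\mathcal O(e^{-c_p/\epsilon})$. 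The leading correction is $-(\partial_x-\delta_x^h)\phi^{\epsilon,h}\,\Lambda^\epsilon$, which is $\mathcal O(\epsilon\Vert(\partial_x-\delta_x^h)\phi^{\epsilon,h}\Vert)$ because $\Lambda^\epsilon=\mathcal O(\epsilon)$; the corrections coming from $(\partial_x-\delta_x^h)\eta^h$ carry an additional factor $\partial_y X^\epsilon=\mathcal O(\epsilon)$ and a factor $\phi^{\epsilon,h}=\mathcal O(\epsilon)$, hence are $\mathcal O(\epsilon^2 h^p)$ and subordinate. Invoking \eqref{cond1} to rewrite $\Vert(\partial_x-\delta_x^h)\phi^{\epsilon,h}\Vert=\mathcal O(h^p)$ then yields the claimed bound \eqref{errorPhinh}.

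The step I expect to be the main obstacle is the geometric decay of the first part, specifically verifying that the discretization preserves the constant-factor contraction uniformly in $n$. Relative to the basic \textsc{SOF} iteration, the modified recursion carries the two extra terms $(\partial_x X^\epsilon + \partial_y X^\epsilon\,\delta_x^h\eta^h)\phi^{\epsilon,h}_{n-1}$ and $-\phi^{\epsilon,h}_{n-1}a$, and one must check that both stay subordinate to the driving term $\mathcal O(\epsilon\xi_n^{-1}\gamma_{n-1})$ rather than merely $\mathcal O(\gamma_{n-1})$; this hinges on $\Vert a\Vert=\mathcal O(\epsilon)$, which itself uses that $\eta^h$ is an accurate slow manifold in the sense of \propref{etah} and \corref{etah2}, and on $\partial_x\Lambda^\epsilon=\mathcal O(\epsilon)$. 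The second delicate point is purely quantitative: the discrete Cauchy estimate \eqref{cond2} costs a domain shrinkage $\xi_n$ at every step, and one must confirm, exactly as in \propref{modSOprop}, that the $\xi_n$ can be chosen $\mathcal O(\epsilon)$ and uniformly, so that $\mathcal O(\epsilon^{-1})$ iterations are affordable before the strip of analyticity of width $\nu_0$ is exhausted. Analyticity of $X^\epsilon$ and $Y$ and the uniform constant $C_p$ in \eqref{cond2} are what make this bookkeeping go through.
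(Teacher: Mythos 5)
Your proposal is correct and follows essentially the same route as the paper: the paper's proof simply says to proceed as in \propref{etah} --- i.e., split $\partial_x = \delta_x^h + (\partial_x - \delta_x^h)$ and use the exponentially small discrete residual together with \eqref{cond1} --- noting only that $\partial_x\Lambda^\epsilon$ is replaced by $\partial_x X^\epsilon(x,\eta^h)+\partial_y X^\epsilon(x,\eta^h)\,\delta_x^h\eta^h$. Your two-stage argument (discrete residual made exponentially small via the Neishtadt-type contraction with the discrete Cauchy estimate \eqref{cond2}, then conversion to the continuous equation \eqref{phieqn} with the leading correction $-(\partial_x-\delta_x^h)\phi^{\epsilon,h}\,\Lambda^\epsilon$) is exactly the argument the paper's terse proof delegates to \propref{etah} and \propref{modSOprop}, just written out in full.
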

\begin{proof}
 One can proceed as in \propref{etah}. Note that $\partial_x \Lambda^\epsilon=\partial_x X^\epsilon(x,\eta^h)+\partial_y X^\epsilon(x,\eta^h) \partial_x {\eta}^h(x)$ has been replaced by
 \begin{align*}
  \partial_x X^\epsilon(x,\eta^h)+\partial_y X_0(x,\eta^h) \delta_{x}^h {\eta}^h.
 \end{align*}
\end{proof}

\begin{remark}\remlab{errorPhinhRemark}
If $X^\epsilon=\epsilon X$ and the slow and fast variables have been properly identified, then $\Vert \phi^\epsilon \Vert=\mathcal O(\epsilon)$ cf. \eqref{phi0} and the order in \eqref{errorPhinh} will be $\mathcal O(\epsilon^2 h^p+e^{-c/\epsilon})$ as in \corref{etah2}. If $X^\epsilon = \mathcal O(1)$ and only satisfies \eqref{XepsSmall} then $\phi^\epsilon=\mathcal O(1)$ and the error in \eqref{errorPhinh} is therefore only $O(\epsilon h^p+e^{-c/\epsilon})$ which is slightly less accurate. To improve it by a factor of $\epsilon$ one could do as in \corref{etah2} and replace $\delta_x^h \phi^{\epsilon,h}$ by $\partial_x \phi_0^\epsilon+\delta_x^h(\phi^{\epsilon,h}-\phi_0^{\epsilon})$ and use that $\partial_x \phi_0^\epsilon$ can be obtained analytically from \eqref{phi0}.
\end{remark}

The \textsc{SO-SMST} method requires the propagation of the slow variables on the slow manifold. For this the discretized \textsc{SO} method will be integrated into a Runge-Kutta quadrature scheme as explained in the following section.
\subsection{Modified Runge-Kutta scheme and $h$-grid}\seclab{rungeKutta}
On the slow manifold, the motion of the slow variables is given in terms of the reduced system:
\begin{align}
 x' =\Lambda(x)\equiv X(x,\eta(x)).\eqlab{reducedEqn}
\end{align}
Recall that $()'$ denotes differentiation with respect to the slow time $\tau=\epsilon t$. 
The solution of this reduced system can be approximated by applying a quadrature scheme. A classical $4$th order Runge-Kutta scheme will be used. The modifications from one scheme to another is straightforward and not important for what will be presenting. 

Starting from $x(\tau)=x_0$ the $4$th order Runge-Kutta method approximates $x(\tau+\Delta \tau)=x_1$ as
\begin{align*}
 x_1 = x_0+\frac{1}{6}\left(\kappa_1 + 2\kappa_2 + 2\kappa_3 +\kappa_4\right),
\end{align*}
where
\begin{align*}
 \kappa_1 &= \Delta \tau X(x_0,\eta(x_0)),\\
 \kappa_2 &= \Delta \tau X(x_0+0.5 \kappa_1,\eta(x_0+0.5 \kappa_1)),\\
 \kappa_3&=\Delta \tau X(x_0+0.5 \kappa_2,\eta(x_0+0.5\kappa_2)),\\
 \kappa_4&=\Delta \tau X(x_0+\kappa_3,\eta(x_0+\kappa_3)).
\end{align*}
See e.g. \cite{but1}. Here $\Delta \tau$ is the time step on the slow time scale. The local error is $\mathcal O(\Delta \tau^5)$ while the accumulated error is $\mathcal O(\Delta \tau^4)$. 
The Runge-Kutta scheme will therefore require the determination of $\eta(x)$ at the following different $x$-values: 
\begin{align}
x=x_0,\quad x_0+0.5 \kappa_1,\quad x_0+0.5 \kappa_2,\quad \mbox{and}\quad x_0+\kappa_3.\eqlab{rkpts}
\end{align} 
This is where the discretized \text{SO} method will be used. To explain the construction of the finite difference operator $\delta_x^h$ \eqref{cond1}, consider for example the determination of $\eta(x_0)$. A grid is introduced around $x_0$, and $\delta_x^h$ is then determined by Lagrange interpolation polynomials derived from function values at the $3^{n_s}$ points:
\begin{align}
 x_0+e_h\quad \mbox{with}\quad (e_h)_i = 0,\,\mbox{or}\, \pm h\quad \text{for}\quad i=1,\ldots,n_s.\eqlab{discretized}
\end{align}
This gives $p=2$ in \eqsref{dsomr}{dsomr2}. The following is important: Since $h$ is small $A_0=A_0(x)$ can be taken to be a constant on the $h$-grid. The error from this can be collected into $a_n=\mathcal O(\epsilon\delta_0)$ cf. \eqref{ean} and does therefore not change the result. The LU-decomposition of $A_0$ can therefore be stored outside the iteration in $n$. Cf. \eqref{dsomr} one can by letting $\epsilon h^2 \sim \Delta \tau^5$ or $h^2 \sim \epsilon^{-1} \Delta \tau^5$ match the order of the Runge-Kutta scheme with the order of the approximation of the slow manifold $\eta$. If $\eta_0$ is used explicitly as described in \corref{etah2} then one can instead let 
\begin{align}
h^2 \sim \epsilon^{-2} \Delta \tau^5.\eqlab{hgrid}
\end{align}

This quadrature scheme for the propagation will be referred to as the \textit{modified Runge-Kutta scheme}. For moderate values of $n_s$, say $1$, $2$ or $3$, the cost involved in each time step is comparable to the cost of a single step in an implicit method of the same order applied to the full system. Indeed, for both methods the computational cost is expected to be dominated by the cost required to obtain a solution of a linear equation. The modified Runge-Kutta scheme requires the solution \eqref{etanh} while an implicit method requires the solution of another linear equation on the full space in the application of the Newton method. For larger values of $n_s$ the reduced quadrature suffers from having to resolve $\partial_x$ using $3^{n_s}$ number of points in the $h$-grid. 

\begin{remark}\remlab{slowManifoldGrid}
 Alternative to the method outlined above, one could compute the slow manifold on a larger grid and then interpolate to obtain the values of $\kappa_i$, $i=1,2,3,4$, needed in the Runge-Kutta scheme. This, however, involves unnecessarily many computations. The direct use of the \textsc{SO} method in the forward integration only involves computations of the slow manifold where it is needed for the propagation of the variables.
\end{remark}

The following section combines several examples for testing and demonstrating the \textsc{SO-SMST} method. 

\section{Examples}\seclab{examples}
This section includes five different examples. 
\begin{itemize}
 \item In \secref{toy} a toy model is considered in order to test the iterative methods and demonstrate their stated properties;
 \item \Secref{SOSMSTExact} includes the boundary value problem \eqref{bctoy} where the \textsc{SO-SMST} methods gives the desired solution up to exponentially small terms;
 \item \Secref{num} considers a nonlinear model of reciprocal inhibition. A boundary value problem with fixed boundaries is considered. The results from applying the \textsc{SO-SMST} method to this problem are compared to results obtained from the \textsc{SMST} method. It is demonstrated that there is no issues with obtaining a solution using the \textsc{SO-SMST} method for $\epsilon\rightarrow 0$. 
  \item In \secref{fhn} the FitzHugh-Nagumo model is considered. A homoclinic solution is computed and it is shown how the \textsc{SO-SMST} can be combined with other methods to compute a full orbit. The projection based on the tangent spaces to the fibers through the function $\phi^\epsilon$ is also compared with the result of just using the tangent spaces with $\epsilon=0$ as explained in \eqref{Naive}. An improvement in accuracy by a factor of $10^{-3}$ is observed when the projection is based on $\phi^\epsilon$ without any detectable difference in computational time. 
 \item In \secref{Lindemann} the \textsc{SO-SMST} method is successfully applied to the Lindemann mechanism \cite{calder2011properties,gou1,kri4}, which is an example of a system where the slow and fast variables have not been properly identified. Applying the ``naive'' projection as described in \eqref{Naive} gives rise to $\mathcal O(1)$-errors.
\end{itemize}

\subsection{Testing the iterative methods: a toy example with a saddle-type slow manifold}\seclab{toy}
 Consider the following toy-problem:
 \begin{align}
  \dot x &= \epsilon \begin{pmatrix}
             \cos x_1 +y_1 +y_2\cos x_2 \\
             - \sin x_2+y_2 +  y_1\sin x_1
            \end{pmatrix},\eqlab{toy}\\
            \dot y &=\begin{pmatrix}
            \cos x_2-y_1\\
           -\sin x_1 +y_2
            \end{pmatrix}.
            \end{align}
            From the Jacobian matrix $\partial_y Y=\text{diag}\,(-1,1)$ it follows that the slow manifold is of saddle type. Since the problem \eqref{toy} is linear in the fast variables the \textsc{SO} method can then be used to compute $\eta=(\eta_1,\eta_2)$ explicitly using Maple. Terms up to and including order $\epsilon^{8}$ will be used in the following. In \figref{etalinh} such a reference CAS-solution is compared with a numerical solution $\eta^h$ obtained using the discretized \textsc{SO} method, see \propref{etah} and \corref{etah2}, at $x=(-0.5,-0.7)$. The finite difference operator $\delta_x^h$ was second order ($p=2$) and based on Lagrange interpolation, as explained after \eqref{discretized}. In both figures $h=\epsilon$. Figure (a) is obtained using \propref{etah} whereas figure (b) is obtained using \corref{etah2} and
\begin{align*}
 \eta_0  =\begin{pmatrix}
           \cos x_2\\
           \sin x_1      
          \end{pmatrix},
\end{align*}
explicitly. From the log-log scale in \figref{etalinh} we numerically determine the orders of the approximations to be $\mathcal O(\epsilon^3)$ and $\mathcal O(\epsilon^4)$. This is in agreement with the analysis above, see \eqref{dsomr}$_{h=\epsilon}$ and \eqref{dsomr2}$_{h=\epsilon}$, respectively, with $p=2$.

\begin{figure}[h!]
\begin{center}
\subfigure[]{\includegraphics[width=.475\textwidth]{./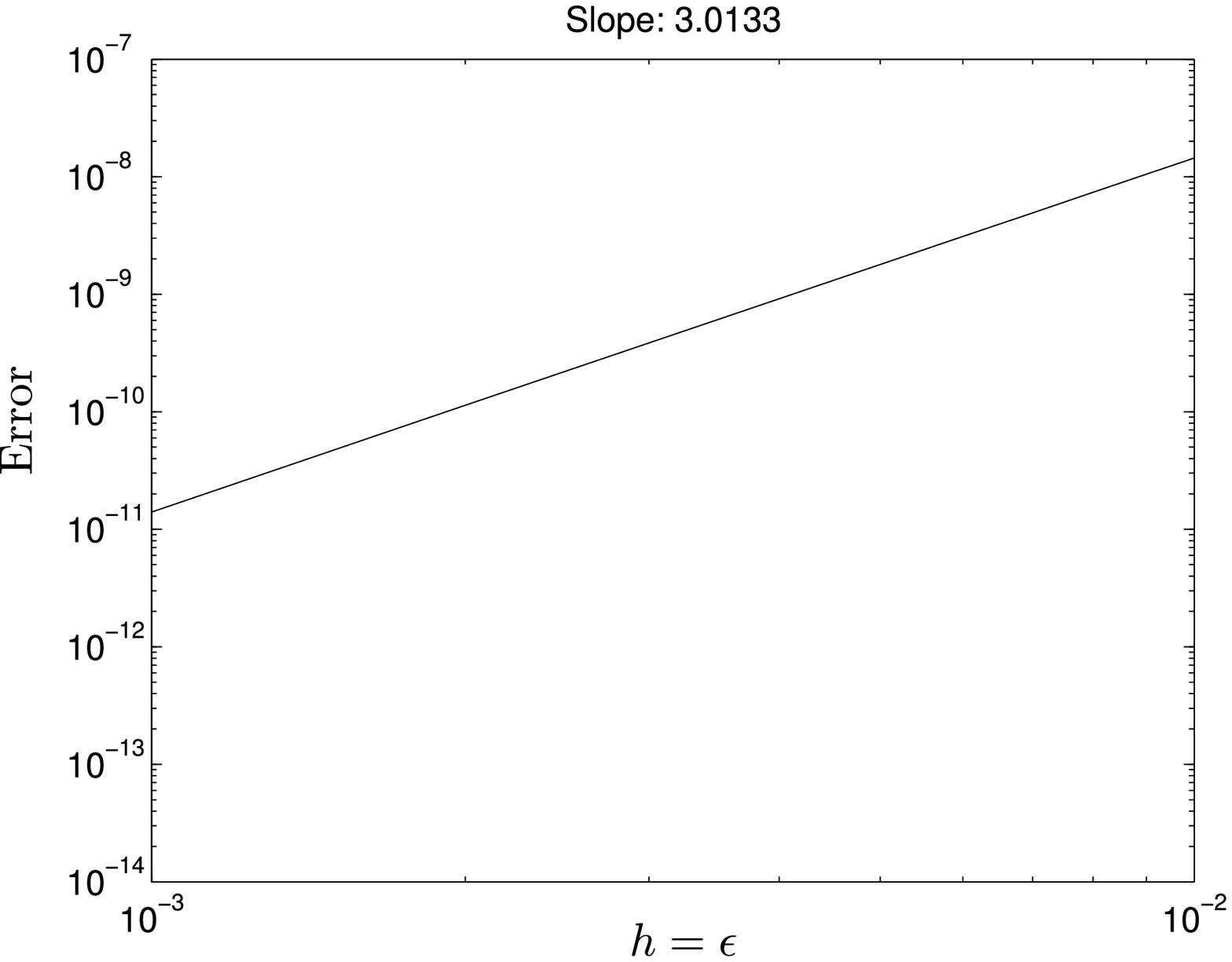}}
\subfigure[]{\includegraphics[width=.475\textwidth]{./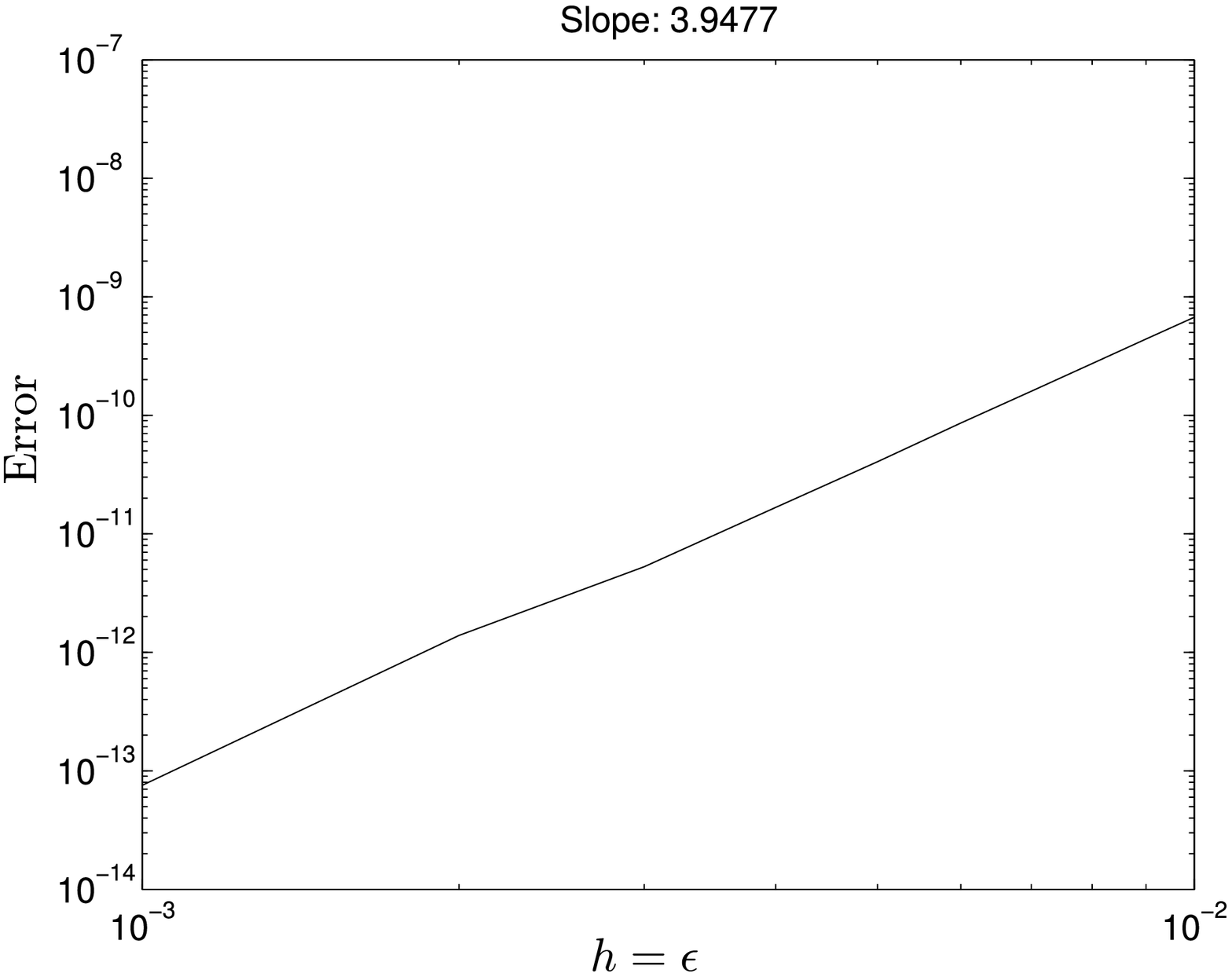}}
\end{center}
\caption{The error $\Vert \eta^h-\eta\Vert$ for the problem \eqref{toy} at $x=(-0.5,-0.7)$ for $h=\epsilon$ and as a function of $\epsilon$.  The approximation $\eta^h$ in (a) is obtained using \propref{etah} while $\eta^h$ in (b) is is based on \corref{etah2} and the explicit use of $\partial_x \eta_0$. The finite difference operator is second order ($p=2$) with respect to the grid size $h$. The reference solution $\eta$ is obtained using Maple (accurate up to $\mathcal O(\epsilon^9)$). The slopes of $\approx 3$ and $\approx 4$ correspond to orders of $\mathcal O(\epsilon^3)$ and $\mathcal O(\epsilon^4)$, respectively, which are in agreement with the analysis.}
\figlab{etalinh}
\end{figure}
In \figref{rk4test} the results of applying the modified Runge-Kutta scheme to
\begin{align*}
  x' &=  \begin{pmatrix}
             \cos x_1 +\eta_1 +\eta_2\cos x_2 \\
             - \sin x_2+\eta_2 + \eta_1\sin x_1 
            \end{pmatrix}
\end{align*}
for different values of $\Delta \tau$ and $\epsilon = 10^{-3}$, is compared with a high-precision reference solution obtained using Matlab's ode45 applied to \eqref{reducedEqn}. The integration is initialized $x(0)=(                                          -0.5,                                                                                                              -0.7)$ and integrated up until $\tau = 10$.
The absolute and relative tolerances of ode45 were set to $10^{-12}$ and $\eta=(\eta_1,
            \eta_2
           )$ from the Maple computation, again including terms up to order $\epsilon^{8}$, was used in the ode45 solver to obtain an accurate reduction to the slow manifold. In the modified Runge-Kutta scheme the method described in \corref{etah2} was used with $\eta_0$ and $\partial_x \eta_0$ used explicitly, stopping the \textsc{SO} iteration when the error 
           \begin{align}
            \Vert -\delta_x^h \eta X^\epsilon(x,\eta)+Y(x,\eta)\Vert, \eqlab{error}
           \end{align}
           had reached below a tolerance $\mbox{tol}$, which was set to be $10^{-12}$. The grid size was set to be $$h=\min\{10^{-2},0.1 \Delta \tau^5/\epsilon^2\}.$$ 
           The factor of $0.1$ was introduced as a ``safety factor'' aiming to ensure that the error from the approximation of $\eta$ was subordinate to the error of the Runge-Kutta scheme, see also \eqref{hgrid}. 
%
           \figref{rk4test} (a) compares the reference ode45 solutions $x_1$ $(-)$ and $x_2$ ($--$) with the solutions ($\diamond$) obtained by the modified Runge-Kutta scheme for $\Delta \tau = 0.5$. It is observed that the $\diamond$'s agree with the accurate reference solutions. The maximal deviation was $3\times 10^{-3}$ for this value of $\Delta \tau$. \figref{rk4test} (b) shows the result of direct simulation for $16$ different initial conditions that are obtained as displacements by $\pm 10^{-4},\,\pm 10^{-5},\ldots,\,\pm 10^{-11}$ from the slow manifold along its unstable directions. Matlab's ode15s was used with tolerances set to $10^{-12}$ for the propagation on the full space. Of all the pairs, only for the one with $\pm 10^{-11}$ do the trajectories jump in the same direction. This gives reason to believe that the slow manifold is 
correct up to $\pm 10^{-10}$ but not more accurate than $\pm 10^{-11}$.


\begin{figure}[h!]
\begin{center}
\subfigure[]{\includegraphics[width=.45\textwidth]{./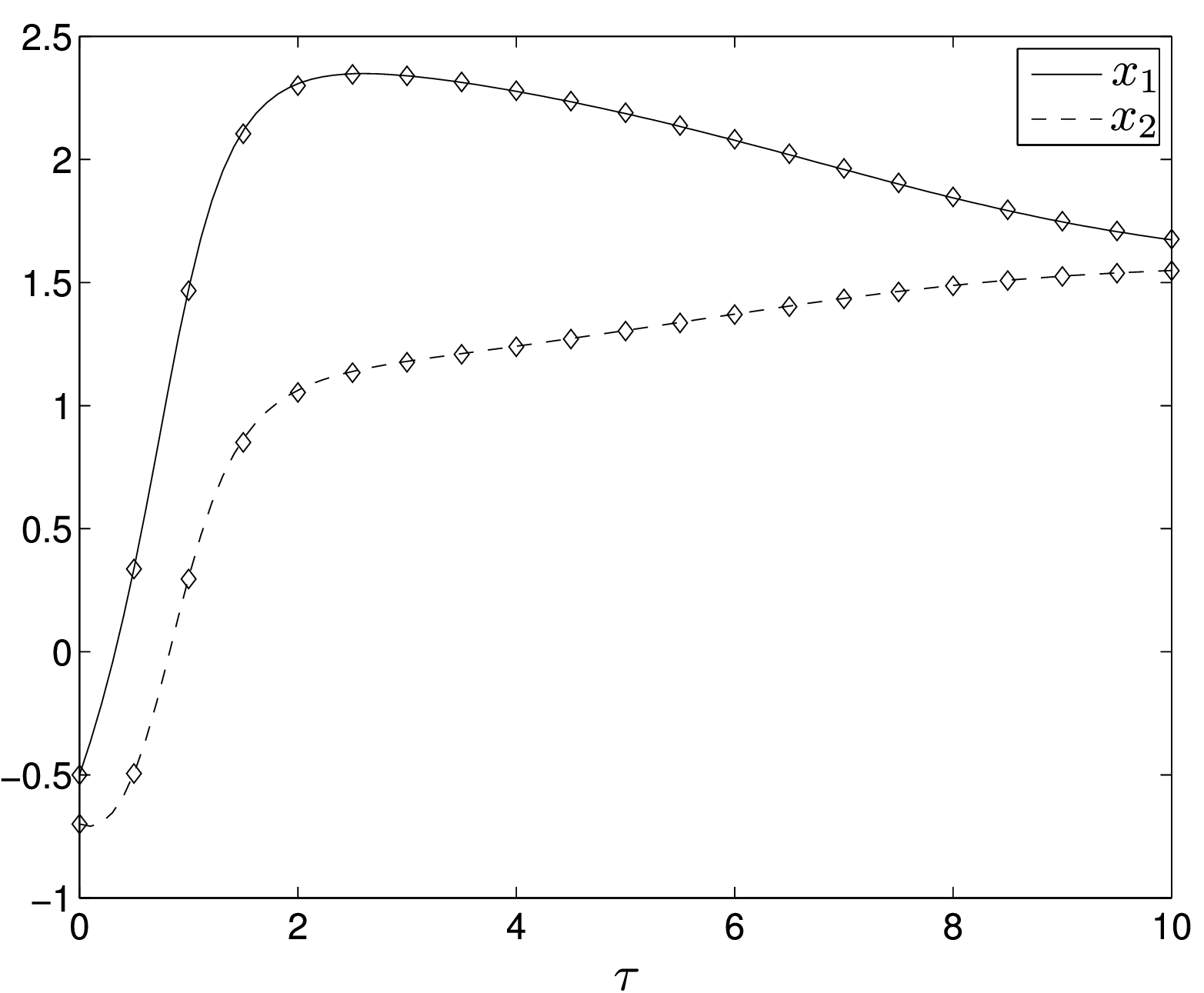}}
\subfigure[]{\includegraphics[width=.5\textwidth]{./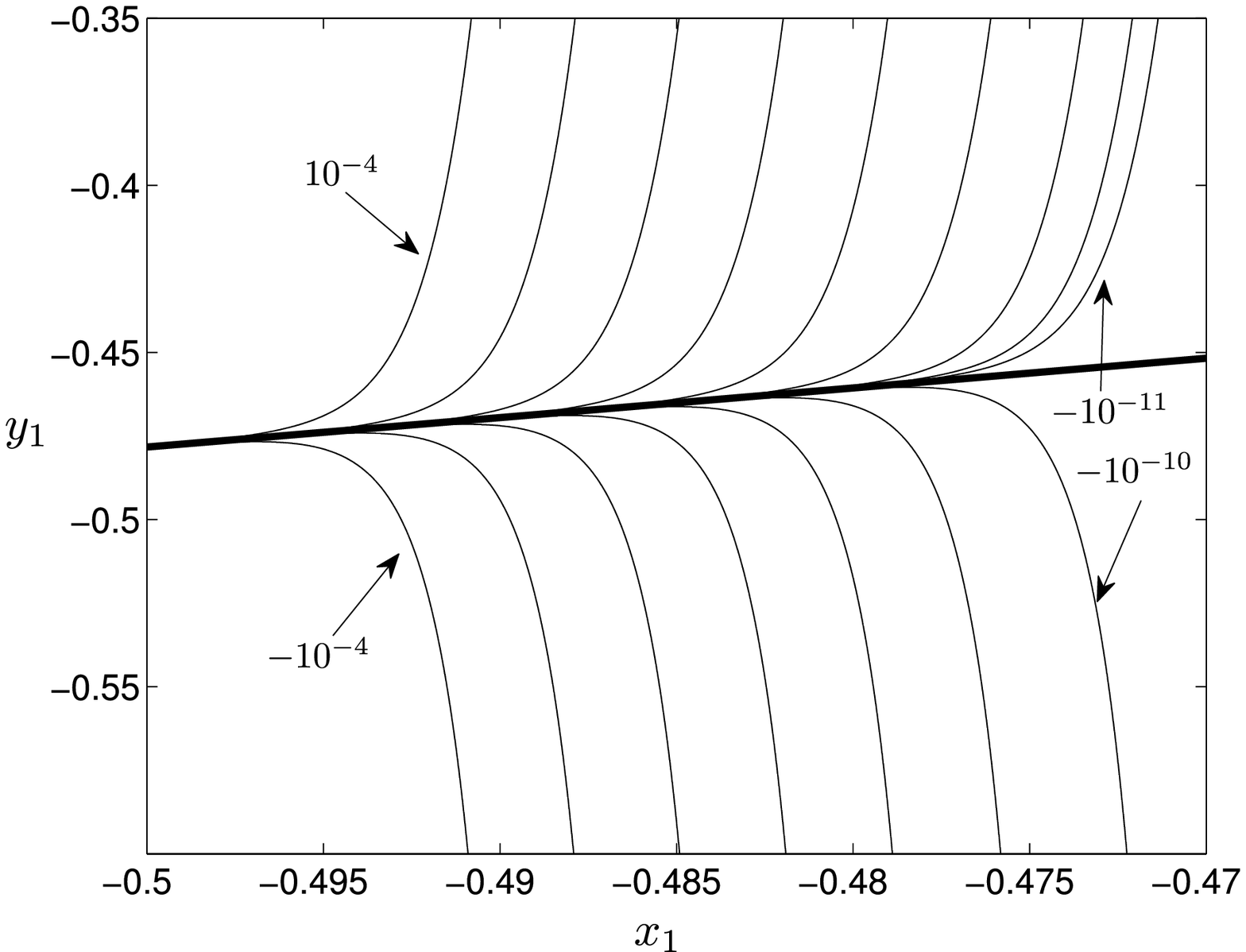}}
\end{center}
\caption{The results of applying the modified Runge-Kutta scheme to the problem \eqref{toy}. Figure (a) compares the solution obtained by the modified Runge-Kutta scheme $(\diamond)$ with an accurate reference solution $(-,--)$. Figure (b) shows the result of an accurate direct simulation on the full system for 16 initial conditions displaced by distances of $\pm 10^{-4},\,\pm 10^{-5},\ldots,\,\pm 10^{-11}$ from the slow manifold along its unstable directions. Of all the pairs, only for the one with $\pm 10^{-11}$ do the trajectories jump in the same direction. }
\figlab{rk4test}
\end{figure}

 The function $\phi^\epsilon$ can also be computed explicitly for the toy problem \eqref{toy}:
 \begin{align*}
  \phi^\epsilon = \epsilon\begin{pmatrix}
                           -1 & \cos x_2\\
                           -\sin x_1 & 1
                          \end{pmatrix}+\mathcal O(\epsilon^2).
 \end{align*}
 Again Maple is used with terms up to order $\epsilon^8$. In \figref{philinh} this is compared with a numerical solution $\phi^{\epsilon,h}$ at $x=(-0.5,-0.7)$ taking again $h=\epsilon$. In agreement with the analysis, cf. \eqref{errorPhinh}$_{h=\epsilon}$ with $p=2$, the slope in the log-log scale is $\approx 4$. 
\begin{figure}[h!]
\begin{center}
{\includegraphics[width=.575\textwidth]{./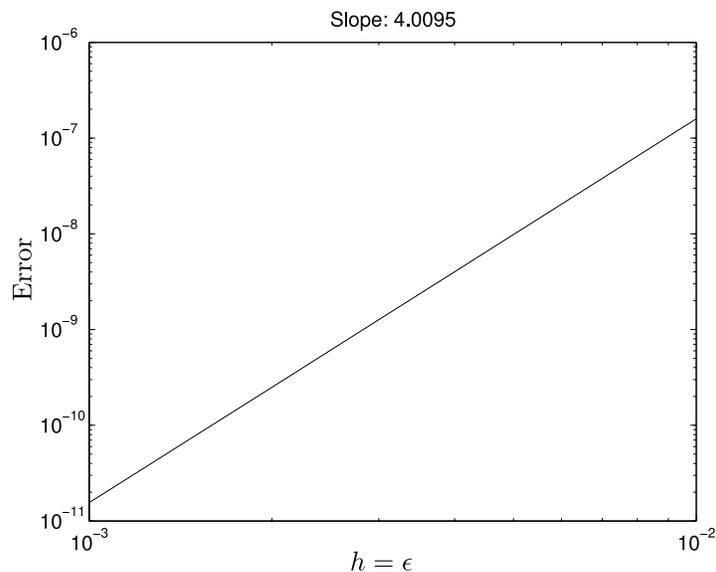}}
\end{center}
\caption{Comparison of $\phi^{\epsilon,h}$ (obtained using \propref{sofmod} with $\delta_x^h$ a second order finite difference) with $\phi^\epsilon$ (obtained using Maple, accurate up to order $\mathcal O(\epsilon^9)$) for the toy problem \eqref{toy}. The grid size $h$ is set equal to $\epsilon$ and the difference $\Vert \phi^{\epsilon,h}-\phi^\epsilon\Vert$ is computed for different values of $\epsilon$. The computations are based on $x=(-0.5,-0.7)$. The slope $\approx 4$ corresponds to an order of $\mathcal O(\epsilon^4)$ which is in agreement with \propref{sofmod} and \eqref{errorPhinh}$_{h=\epsilon}$.}
\figlab{philinh}
\end{figure}

\subsection{An example where SO-SMST gives the result up to exponentially small terms}\seclab{SOSMSTExact}
 Consider the following linear, singular perturbed boundary value problem:
 \begin{align}
  \epsilon u''(\tau) + u'(\tau) = 1,\quad u(0)=1=u(1),\eqlab{shi1}
 \end{align}
 taken from \cite{shi}.
 Setting $$x=u,\quad \text{and} \quad y=u',$$ gives the following slow-fast system
 \begin{align*}
  \dot x & = \epsilon  y\\
 \dot y &=1-y,
 \end{align*}
 with respect to the fast time $t=\epsilon^{-1}\tau$. Here $y=1$ is a normally hyperbolic invariant manifold. The \textsc{SOF} method gives $\eta=1$ and
 $\phi^\epsilon = -\epsilon$ both exact in one step. Since the problem is linear and $\phi^\epsilon$ is independent of $x$ this also implies that the \textsc{SO}-\textsc{SMST} method gives 
 \begin{align*}
 x(\tau) = \tau+e^{-\tau/\epsilon},
 \end{align*}
 which agrees with the true solution of \eqref{shi1}
 \begin{align*}
x(\tau) = \tau+\frac{e^{-\tau/\epsilon}-e^{-1/\epsilon}}{1-e^{-1/\epsilon}},
 \end{align*}
 up to exponentially small terms. 
%
This is not the case for the classical \textsc{SMST} method. See \cite{shi}. 


\subsection{Numerical results for a model for reciprocal inhibition}\seclab{num}
To demonstrate the \textsc{SO-SMST} method further we consider a model for a pair of neurons coupled by reciprocal inhibition \cite{row1}:
\begin{align*}
 \dot q_1 &= \epsilon (-q_1 + sv_1),\\
 \dot q_2 &= \epsilon (-q_2 + sv_2),\\
 \dot v_1 &=-\left(v_1 -a\tanh \left(\frac{\sigma_1 v_1}{a}\right)+q_1 +\omega f(v_2)(v_1-r)\right),\\
 \dot v_2 &=-\left(v_2 -a\tanh \left(\frac{\sigma_2 v_2}{a}\right)+q_2 +\omega f(v_1)(v_2-r)\right),
\end{align*}
with 
\begin{align*}
 f(x) = \left(1+\exp(-4\gamma (x-\theta))\right)^{-1}.
\end{align*}
Here the fast variables $v_1$ and $v_2$ are interpreted as the membrane potential of two neurons coupled synaptically through the terms involving $f$. The slow variables $q_1$ and $q_2$ represent the gating of membrane channels in the neurons. The model does not incorporate the fast membrane currents, and in that sense it is slightly caricatural. However, further reduced models have been used to study reciprocal inhibition of a pair of neurons \cite{ski1,rin2}. The model was also considered in \cite{guc3}, the paper presenting the \textsc{SMST} algorithm. The following parameter values:
\begin{align*}
 \omega = 0.03,\,\gamma = 10,\,r=-4,\,\theta = 0.01333,\,a=1,\,s=1,\,\sigma_1 = 3\quad \mbox{and}\quad \sigma_2 = 1.2652372051,
\end{align*}
also used in \cite{guck4,guc3}, will be used henceforth.

\textbf{Computation of base trajectory}.
\figref{risegB} shows two projections (thick lines) in (a) and (b) of a trajectory segment on the slow manifold, which includes the segment B' in Fig. 6 (c) in \cite{guck4}, which was computed using the modified Runge-Kutta scheme with the discretized \textsc{SO} method based on \corref{etah2}. The time $T$ was set to $0.5$. In forward time the flow is from the lower left to the upper right. Here $\Delta \tau = 10^{-2}$ and $h=10^{-4}<\epsilon^{-1}\Delta \tau^{5/2} =0.01$. {To compute such trajectories using the \textsc{SMST} algorithm it is expected that one has to introduce some sort of continuation to \textit{pull out the transitions at the ends} \cite{langfield2014}.} The segment computed here is much longer than the one in \cite{guck4}. To realize this one can for example compare \figref{risegB} (b) with 
Fig. 6 (c) in \cite{guck4}. It took $0.01$ seconds to compute the 
trajectory in Matlab on an Intel Core i7-3520M 2.90 GHz processor. This time includes the computation of $\phi^\epsilon$ which will be used in the following subsection.
Trajectories, with initial conditions that are displayed from the slow manifold by distances of $\pm 10^{-10}$ along the stable and unstable manifold, are displayed using thinner lines at the tip of this base trajectory. These were obtained from direct integration using Matlab's ode15s with tolerances set to $\text{tol}=10^{-10}$. 

\begin{figure}[h!]
\begin{center}
\subfigure[]{\includegraphics[width=.495\textwidth]{./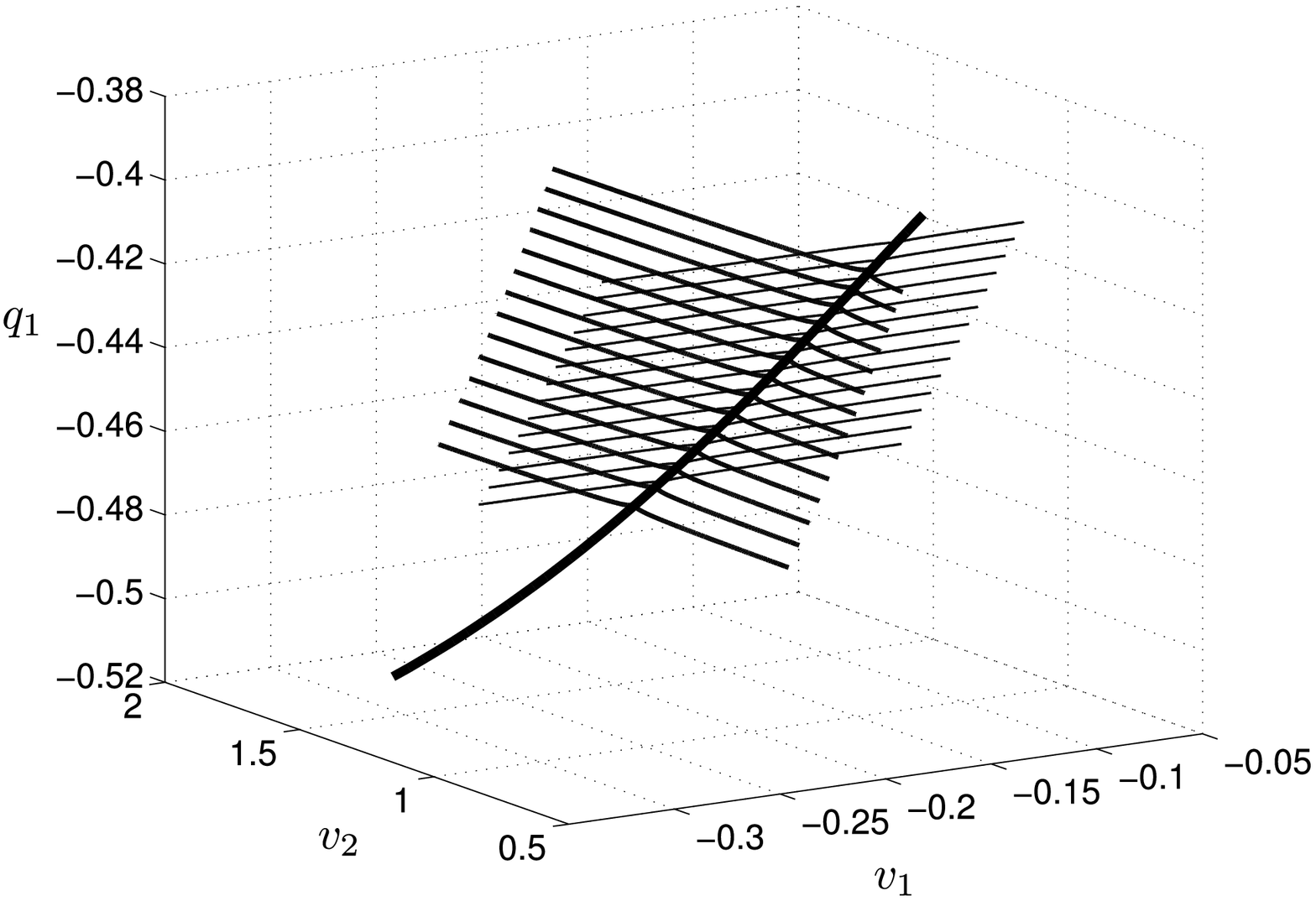}}
\subfigure[]{\includegraphics[width=.475\textwidth]{./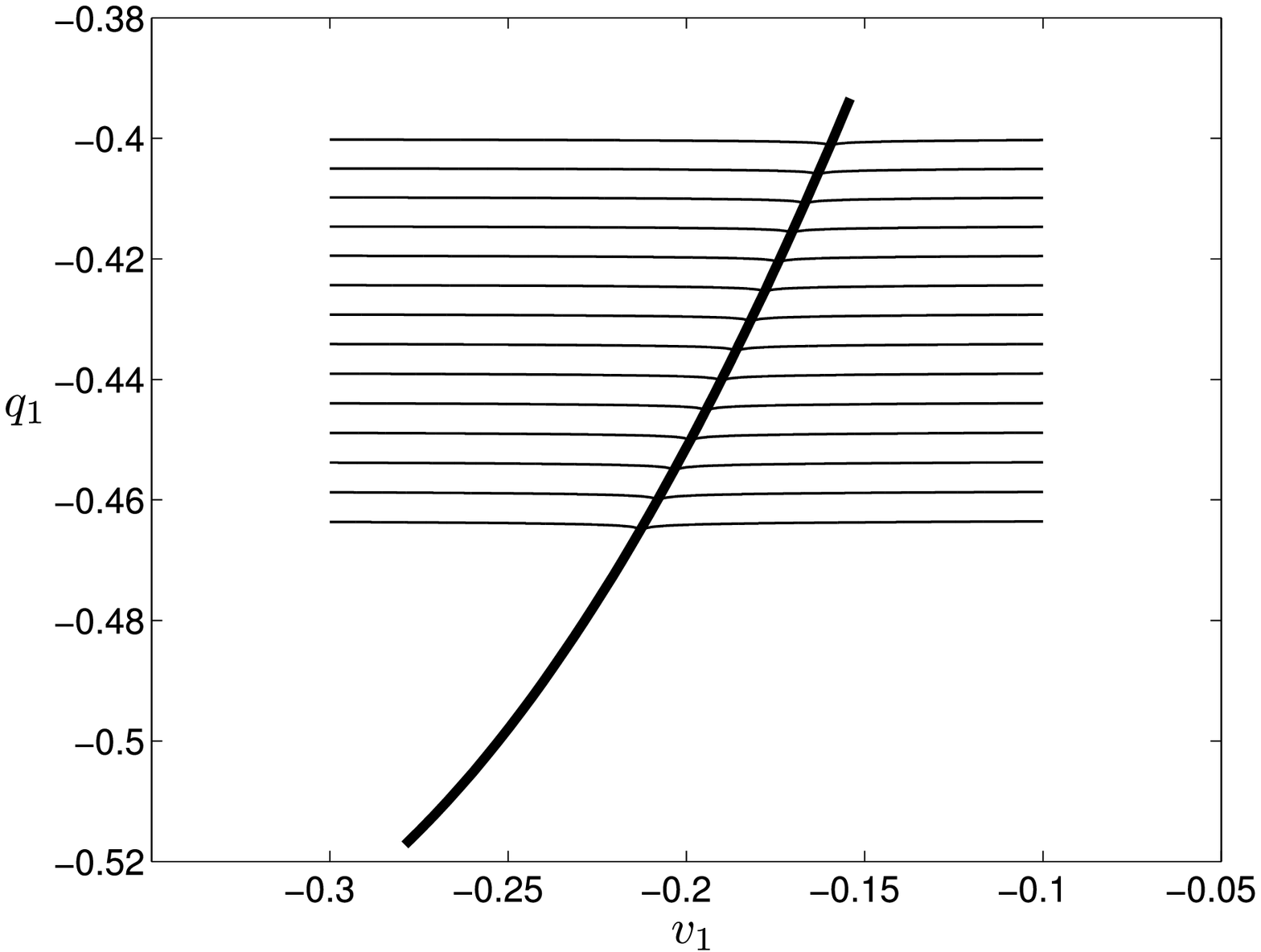}}
\caption{The thick lines in (a) and (b) represent two projections of a trajectory on the slow manifold for the model of reciprocal inhibition. These trajectories were obtained using the modified Runge-Kutta scheme with the \textsc{SO} method based on \corref{etah2}. Trajectories, with initial conditions that are displaced from the slow manifold by distances of $\pm 10^{-10}$ along the stable and unstable manifold, are displayed using thinner lines at the tip of this base trajectory. The thicker of the two sets of thinner lines, going in the $v_2$-direction, corresponds to trajectories on the stable manifold. The set of lines going in the $v_1$ direction corresponds to trajectories on the unstable manifold. These are obtained by direct backward and forward integration respectively. \figlab{risegB}}
\end{center}

\end{figure}

\textbf{Computation of transients}. 
Next, trajectories that connect to the trajectory $z=z(\tau)$ in \figref{risegB} (b) near its starting point 
\begin{align*}
(q_{1},q_{2})(0)&= (-0.51723351869,-0.73434299772),\\(v_{1},v_{2})(0) &= (-0.27894449516,1.71095643157),
\end{align*}
and leave it near its end point
\begin{align*}
(q_{1},q_{2})(T) &= (-0.39340933174,0.00310289762),\\(v_{1},v_{2})(T)& = (-0.15410414452,0.72034762953),
\end{align*}
 were computed using the \textsc{SO}-\textsc{SMST} method described in \secref{nonstiff}. 
An example is shown in \figref{risegB2} (a) as a projection onto the $(v_1,v_2,q_1)$-space. The trajectory was obtained using the \textsc{SO}-\textsc{SMST} method with $\Delta t= \Delta \tau = 0.01$. The value of $v_2$ is fixed at $\tau=0$ to $1.71095643157$ while $v_1$ is fixed to be $-0.025410414452$ at $\tau=T$. This gives a distance of $r\approx 0.1$ from the slow manifold at both ends. In (b) this is compared with an accurate reference solution obtained using the \textsc{SMST} algorithm by plotting the Euclidean norm of the difference of the two solutions as a function of time. There is a good agreement between the two trajectories, the maximal error being $\approx 7.5\times 10^{-6}$ at $\tau=T$. This value is also consistent with \propref{est}: Here $\epsilon=10^{-3}$ and $r= 0.1$ so $\epsilon r^2 = 10^{-5}$. The computation of the approximation using the principle in \secref{nonstiff}, which is visualized using the projections in \figref{risegB2} (a) took 
\begin{align}
1.6\,\text{seconds}.\eqlab{time}
\end{align}
The $1.6$ 
seconds include the time required for the propagation of the base trajectory and the time for the collocation on the fast space. \figref{v1v2} displays $v_1=v_1(\tau)$ in (a) and $v_2=v_2(\tau)$ in (b). Here the fast transients are clearly visible. Finally, if the resulting time mesh from the \textsc{SO}-\textsc{SMST} method, $t$-fine at the ends, $\tau$-fine in-between, is used in the \textsc{SMST} collocation method then one obtains an accuracy of $3.0 \times 10^{-8}$ but it took about twice as long ($3.1$ seconds to be precise). If one continues in this way for smaller values of $\epsilon$ while fixing $r=0.1$, computing trajectories using the \textsc{SO}-\textsc{SMST} method, and then using the resulting time mesh in the \textsc{SMST} collocation method. The time used for the collocation method was still about twice as long, but more importantly the \textsc{SMST} method did not converge for smaller values of $\epsilon$ than $\epsilon=5\times 10^{-6}$. The two methods used the same Matlab collocation code. As 
opposed to the considerations in \cite{guc2}, the distance $r$ has been fixed from the slow manifold while decreasing $\epsilon$. It would be interested to perform a more detailed comparison of the two methods in future research.
\begin{figure}[h!]
\begin{center}
\subfigure[]{\includegraphics[width=.5\textwidth]{./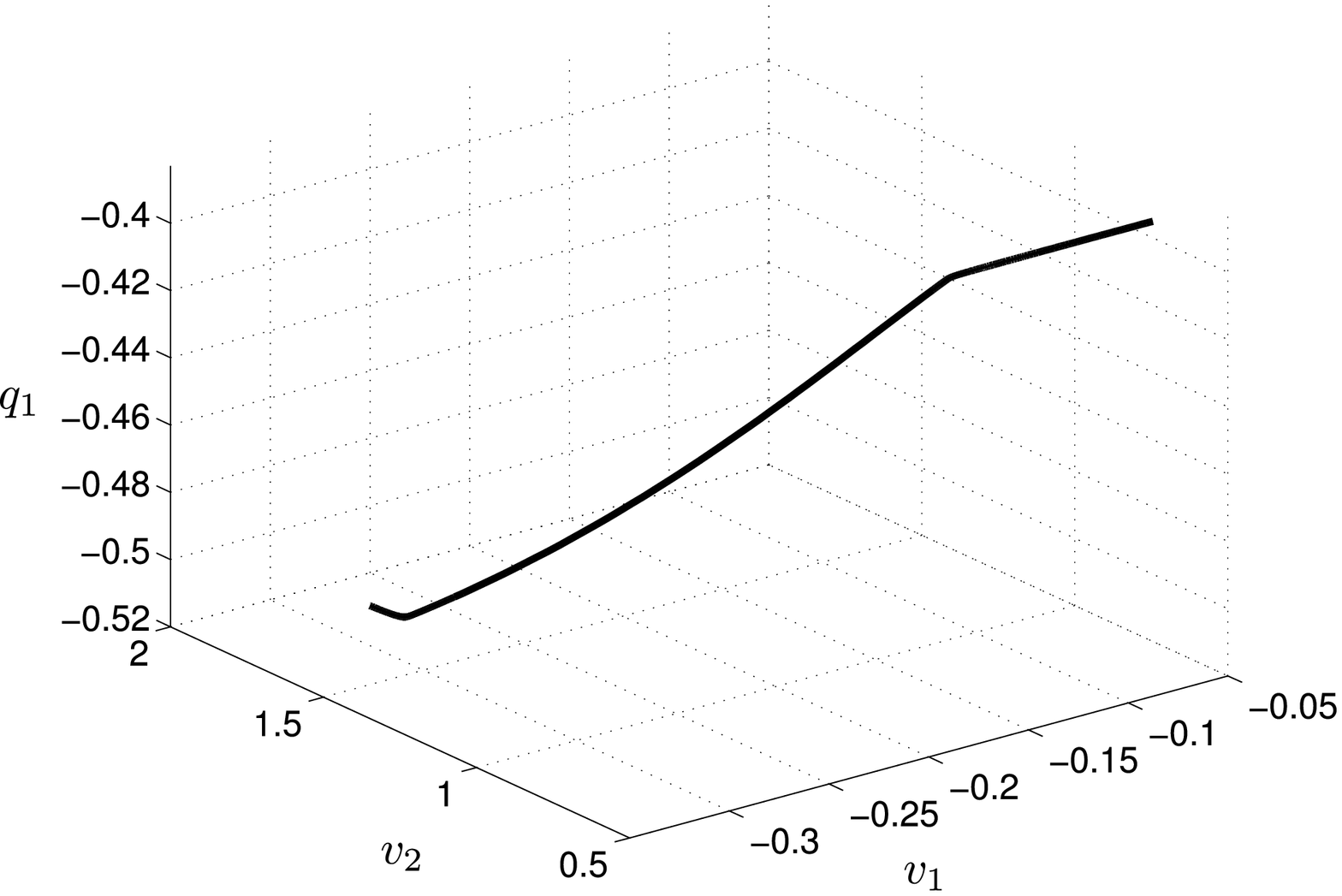}}
\subfigure[]{\includegraphics[width=.475\textwidth]{./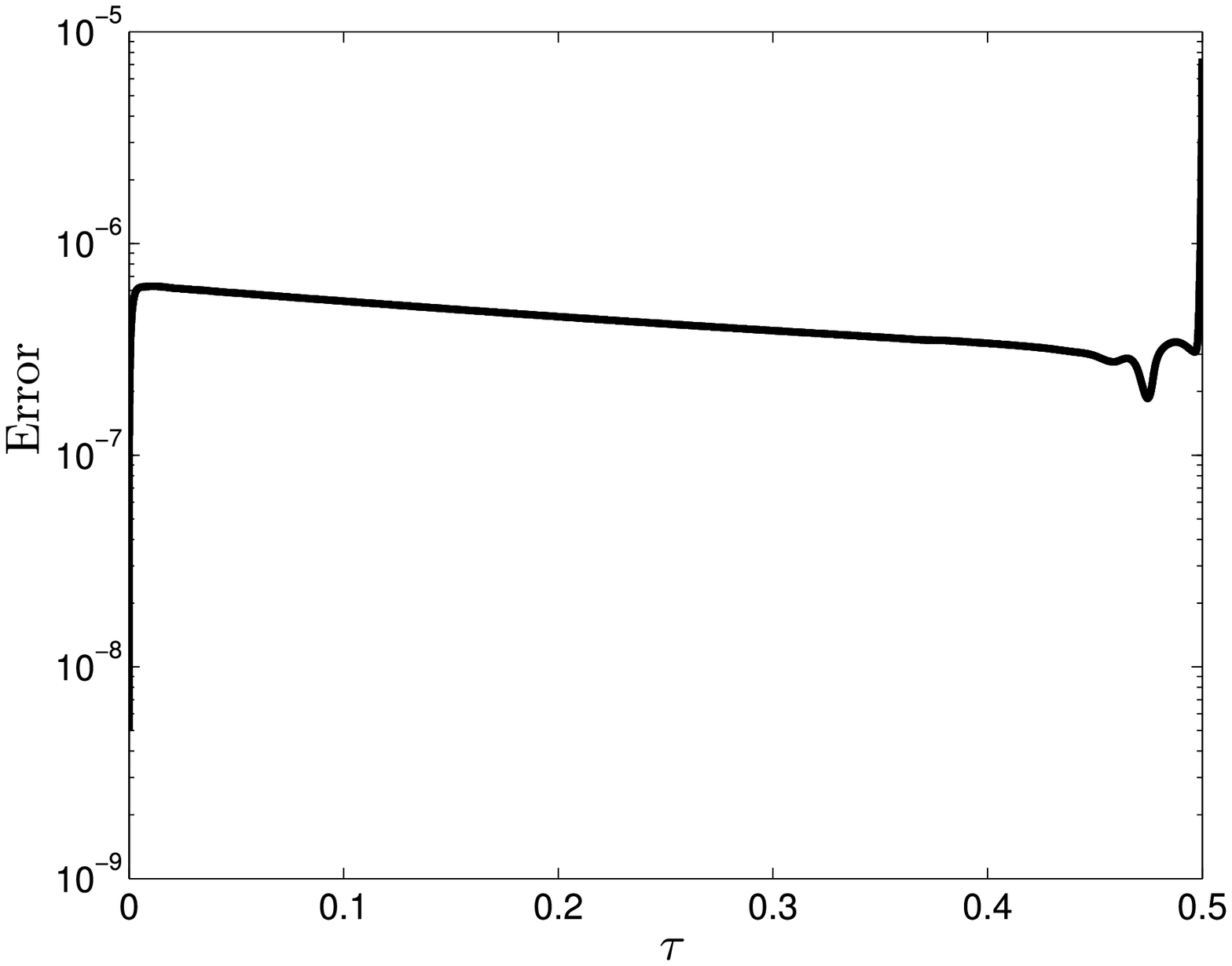}}
\caption{In (a): A trajectory computed using the \textsc{SO-SMST} principle in \secref{nonstiff}. The fast connections are clearly visible. In (b): The accuracy of the trajectory in (a) is analyzed by computing an accurate reference solution using the \textsc{SMST} algorithm. \figlab{risegB2}}
\end{center}
\end{figure}

\begin{figure}[h!]
\begin{center}
\subfigure[]{\includegraphics[width=.475\textwidth]{./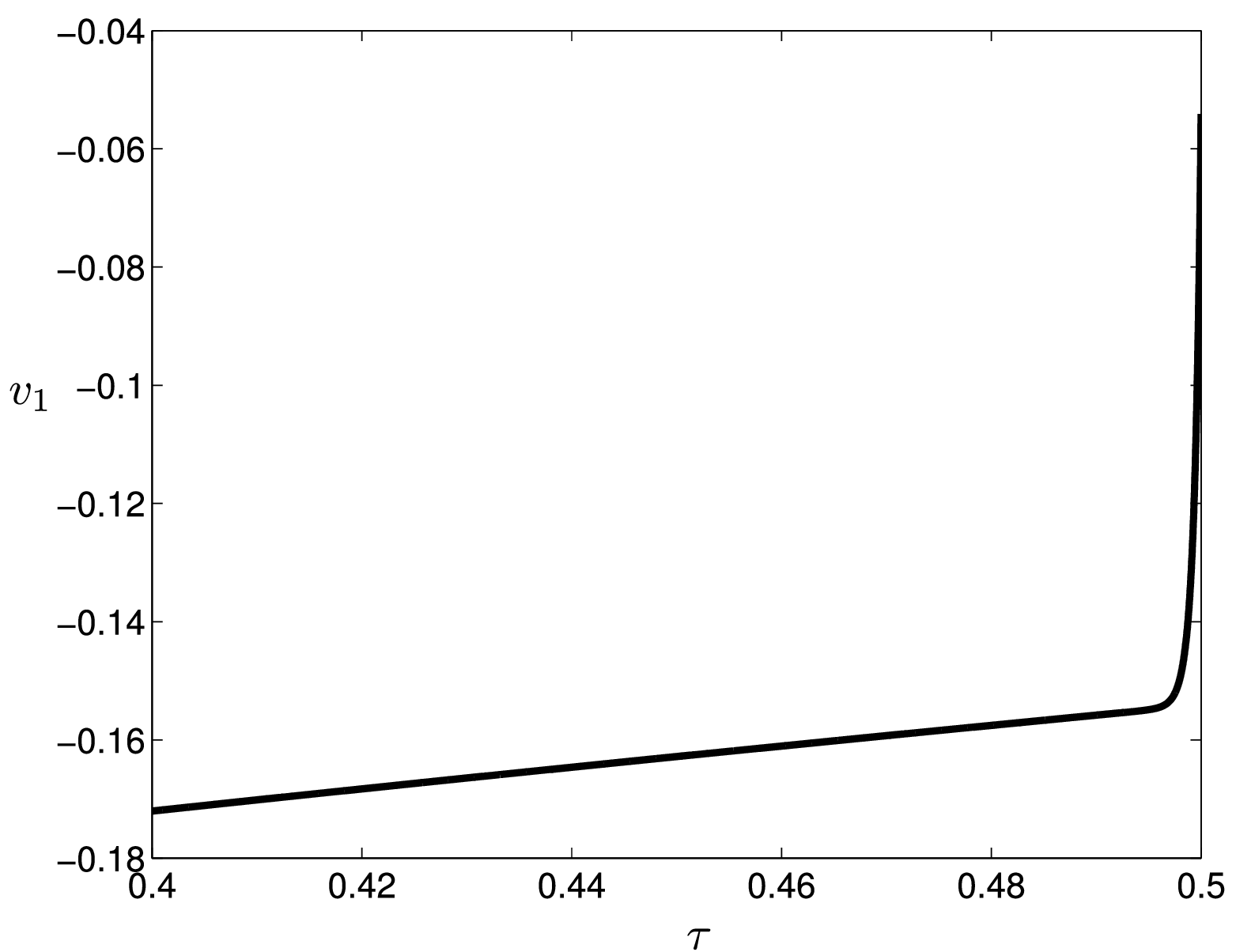}}
\subfigure[]{\includegraphics[width=.475\textwidth]{./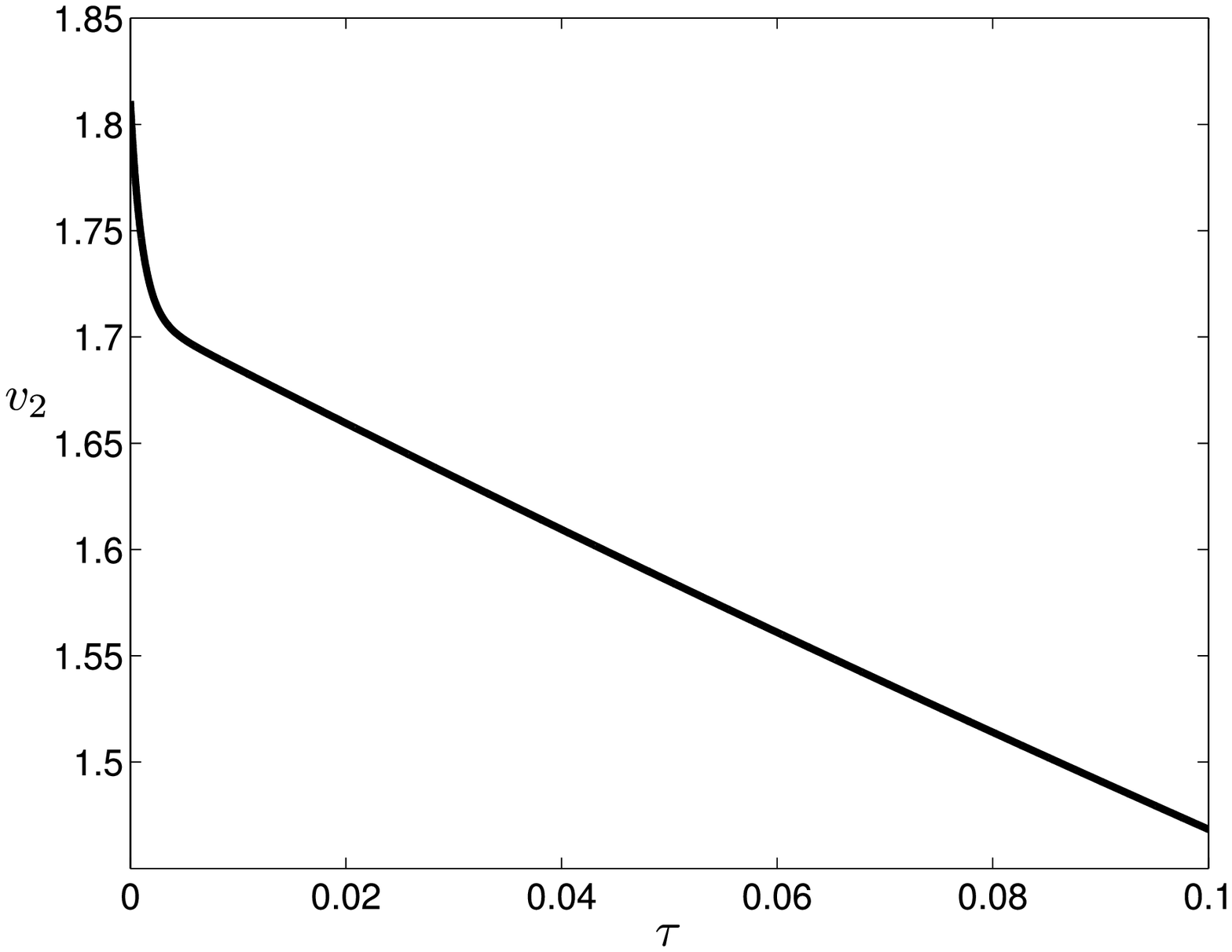}}
\caption{(a) $v_1=v_1(\tau)$ and (b) $v_2=v_2(\tau)$ for the trajectory shown in \figref{fhn} (a). The $v_2$-direction is the stable direction whereas the $v_1$-direction is the unstable direction. The fast connection to the slow manifold is clearly visible in these diagrams. The variable $v_2$ decreases quickly initially whereas $v_1$ grows fast near the end $\tau=0.5$. \figlab{v1v2}}
\end{center}
\end{figure}

\figref{exit2} (a) shows a comparison of solutions obtained using the \text{SO-SMST} method with accurate solutions obtained using the \textsc{SMST} algorithm for three different values of $r=0.1,\,0.5$ and $1$. Only the last part of the trajectories are visualized using a projection onto the $(q_1,v_1)$-plane. The thick lines are the \text{SO-SMST} solutions while the thinner ones are those obtained using \textsc{SMST}  method. The error increases with increasing $r$. In (b) the square of $r$ in \eqref{epsr2} is verified by computing the slope $\approx 2$ of the maximal error as a function of $r$ on a logarithm scale. Here the maximal error is understood as the maximum over $\tau\in [0,T]$ of the Euclidean distances between the trajectories.

\begin{figure}[h!]
\begin{center}
\subfigure[]{\includegraphics[width=.51\textwidth]{./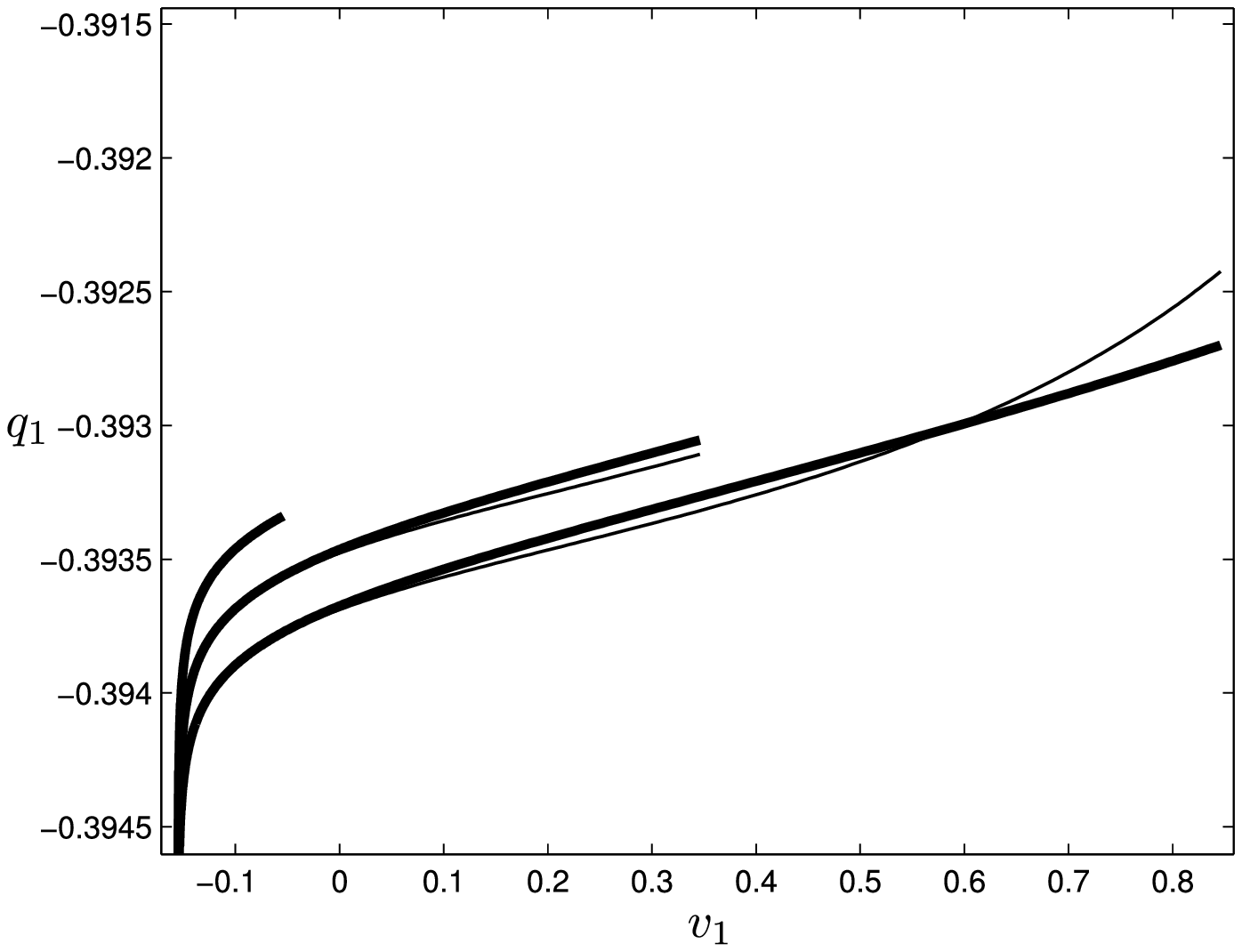}}
\subfigure[]{\includegraphics[width=.466\textwidth]{./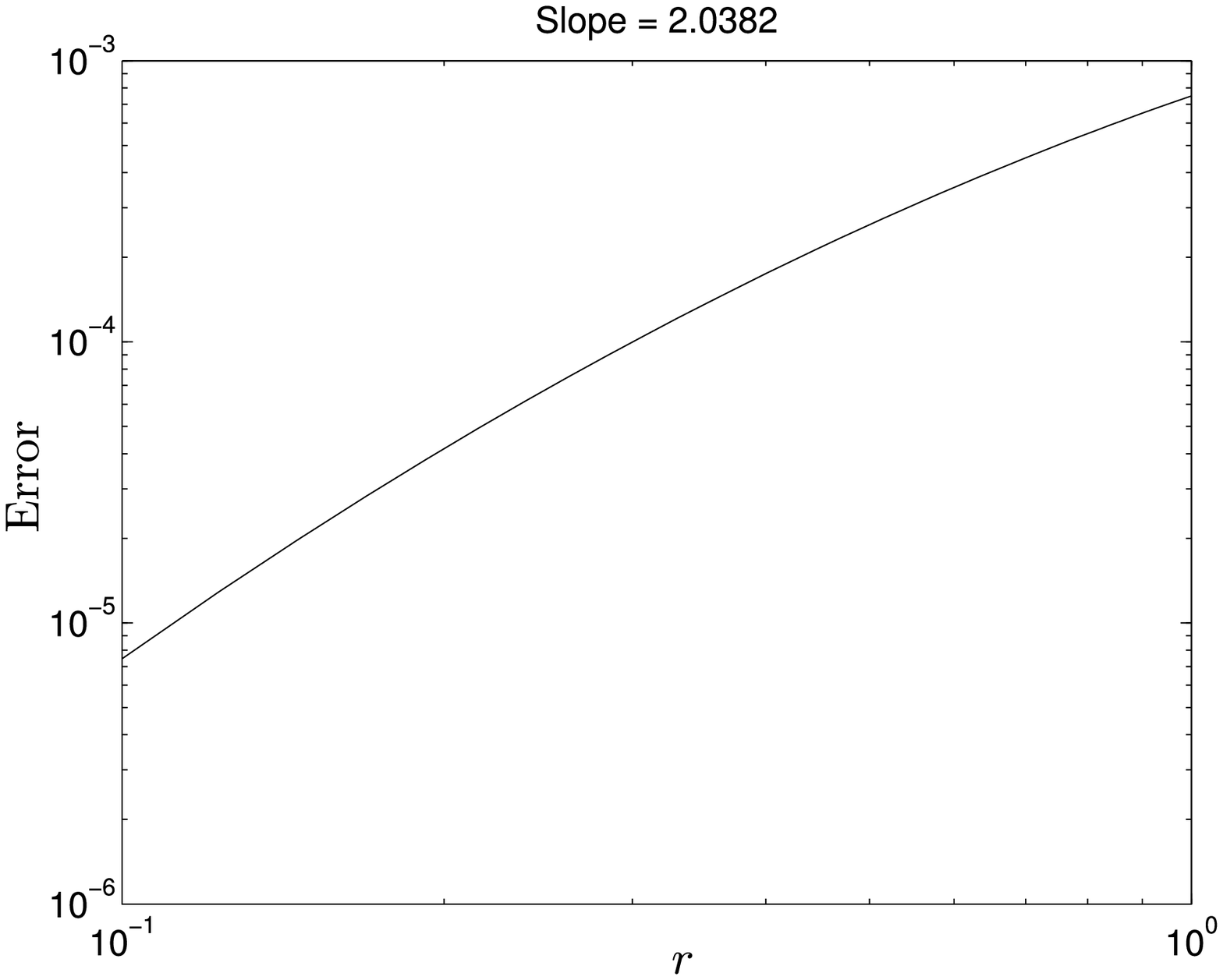}}
\caption{(a): Transients for three different values of $r=0.1,\,0.5$ and $1$. The thick lines are obtained using the \textsc{SO-SMST} principle described in \secref{nonstiff} while the thinner lines are due to the \textsc{SMST} algorithm. (b): The maximal error as a function of $r$. The slope on the logarithm scale is $\approx 2.0$. \figlab{exit2}}
\end{center}
\end{figure}

By applying the \textsc{SO}-\textsc{SMST} procedure, the computation of trajectories near a saddle type slow manifold, has been split into two non-stiff subproblems and as such the singular nature of the original problem has been removed. Therefore no numerical issues appear when $\epsilon$ becomes extremely small. On the contrary, the solution becomes more accurate. \figref{epsv} shows the result of computing similar trajectories to the ones above for extremely small values of $\epsilon$. The distance to the slow manifold has been fixed to $r=0.1$ in both ends. In (a) the exit trajectories are shown in the $(v_1,q_1)$-plane while (b) shows the the time required for different values of $\epsilon$. Notice the $\approx 1.6$ seconds from above \eqref{time} for $\epsilon=10^{-3}$ and furthermore that all the time is taken up by the collocation on the fast space (compare $\diamond$'s with $\circ$'s). The time ($\approx 0.01$ seconds) required for the computation of the base trajectory ($\diamond$'s in \figref{epsv}) is not visible on this scale. 
\begin{figure}[h!]
\begin{center}
\subfigure[]{\includegraphics[width=.492\textwidth]{./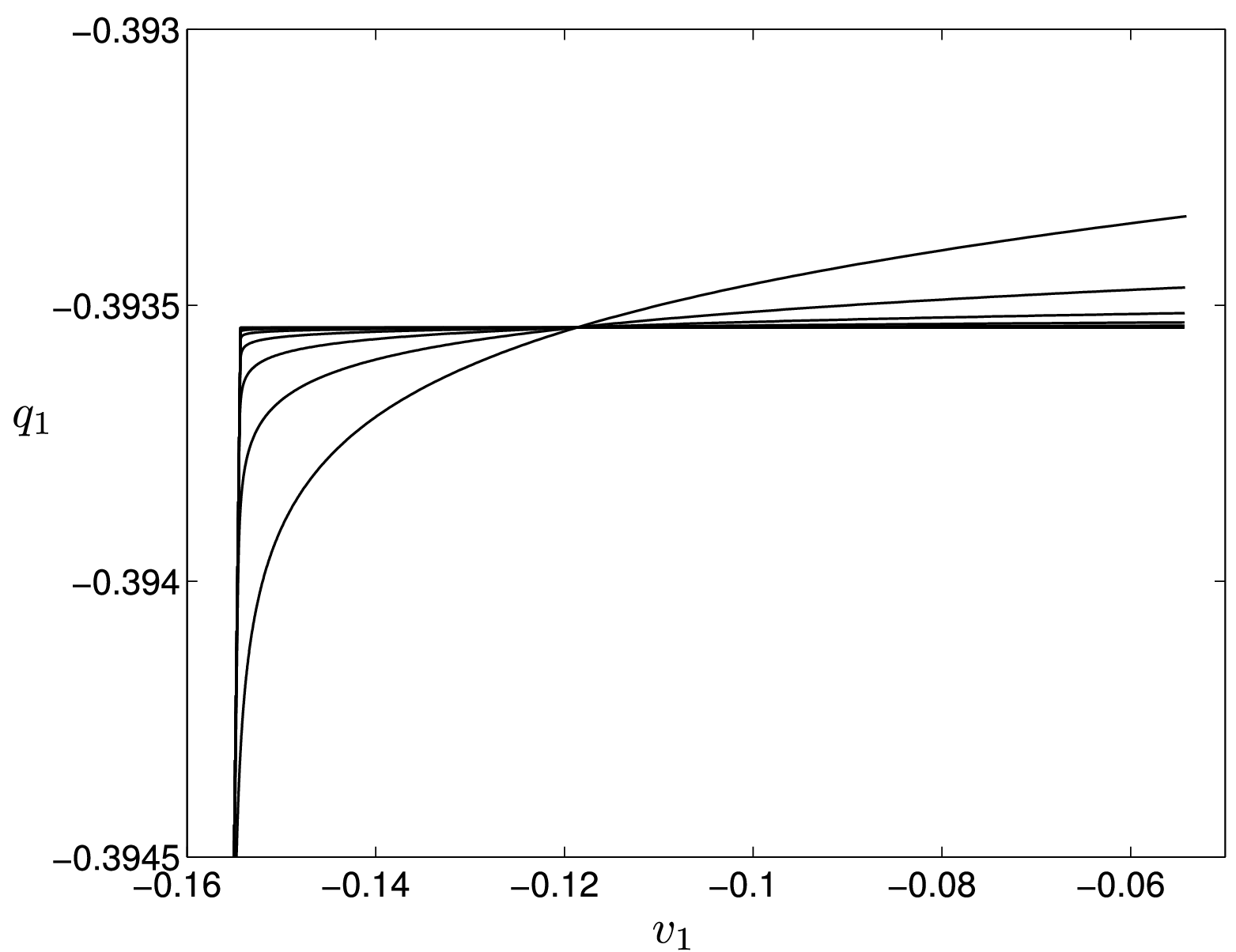}}
\subfigure[]{\includegraphics[width=.475\textwidth]{./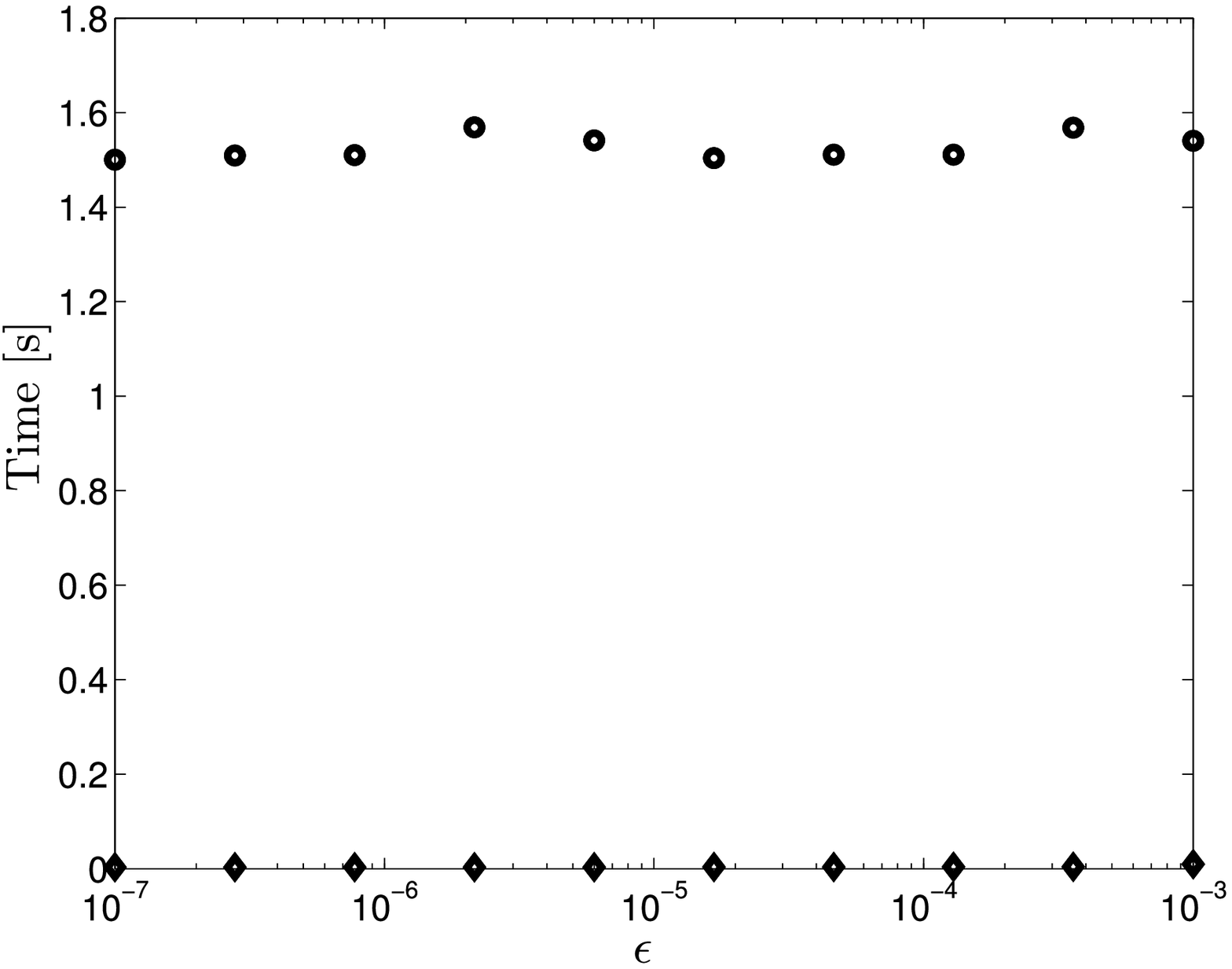}}
\caption{The results of computing canards and their transients by the \textsc{SO}-\textsc{SMST}  method for extremely small values of $\epsilon$. Figure (a) shows the different exit trajectories projected onto the $(v_1,q_1)$-plane. In (b) the time used by the \textsc{SO-SMST} method is illustrated for different values of $\epsilon$. The circles ($\circ$) give the total time, while the diamonds ($\diamond$) give the time required to compute the base trajectory. \figlab{epsv}}
\end{center}
\end{figure}

\subsection{Numerical results for the FitzHugh-Nagumo model}\seclab{fhn}
The FitzHugh-Nagumo model is a PDE model for the membrane potential of a nerve axon which is derived as a simplification of the Hodgin-Huxley model:
\begin{align*}
\partial_t u =\epsilon (v-\gamma u),\quad \partial_t v = d \partial_s^2 v+f_a(v)-u+p, \quad s\in \R^3.
\end{align*}
with $f_a(u) = u(u-a)(1-u)$ and parameters $p,\gamma,\,d$ and $a$. When looking for traveling wave solutions of the form $u(t,s) = x(s+ct)$, $v(t,s)=y_1(s+ct)$, $y_2=y_1'$ 
one obtains the following finite dimensional slow-fast system
\begin{align*}
 \dot x &=\epsilon (y_1-\gamma x),\\
 \dot y_1 &=y_2,\\
 \dot y_2&=\frac{1}{d} (cy_2-f_a(y_1)+x-p).
\end{align*}
Here $c$ is the wave speed.
Geometric singular perturbation theory has been successfully used to analyze this system, see e.g. \cite{jon1,guc2,kru1} and references therein. In particular, the Exchange Lemma has been applied to prove the existence of homoclinic orbits including both fast and slow segments. Homoclinic orbits correspond, by the traveling wave ansatz, to traveling pulse solutions of the PDEs. Such trajectories will be computed in this section using the \textsc{SO-SMST} method. In this section it will be illustrated how the \textsc{SO-SMST} method can be combined with direct integration for computation of a full orbit.  As in \cite{guc3}, attention is restricted to $a=1/10$ and $d=5$, and $f\equiv f_{1/10}$ for simplicity. 

To explain an example of a homoclinic orbit it is first pointed out that the critical manifold is one-dimensional and of the form
\begin{align*}
 M_0 = \{y_2=0,\,x=f(y_1)+p\}=M_0^l\cup \{z_{lm}\}\cup M_0^m \cup \{z_{mr}\}\cup M_0^r.
\end{align*}
It has three different normal hyperbolic components $M_0^l$, $M_0^m$ and $M_0^r$ that are separated by two fold points $z_{lm}\approx (-0.0024+p,0.049,0)$ and $z_{mr}\approx (0.13+p,0.68,0)$. These objects are all contained within the plane $y_2=0$. An example for $p=0$ is shown in \figref{M0} (a). Both $M_0^l$ and $M_0^r$ are of saddle-type whereas $M_0^m$ is repelling. For $\epsilon$ sufficiently small Fenichel's theory imply that $M_0^l\backslash B_\rho(z_{lm})$, $M_0^m\backslash \{B_\rho(z_{lm})\cup B_\rho(z_{mr})\}$ and $M_0^r\backslash B_\rho(z_{mr})$ all perturb to some $M^l$, $M^m$ and $M^r$. Small neighborhoods $B_\rho(z_{lm})$ and $B_\rho (z_{rm})$ of the fold points $z_{lm}$ and $z_{mr}$, respectively, have been removed from have been removed from $M_0^l$ and $M_+^r$ since normal hyperbolicity is violated there. 

For $p=0$ the point $q=(0,0,0)$ is the unique equilibrium and the results of e.g. \cite{jon1, kru1} show that for $\epsilon$ sufficiently small there exists $c_*$, $z_r^{\text{approach}}$, $z_r^{\text{exit}}$ and $z_l^{\text{approach}}$ so that for $c=c_*$ there is a 
homoclinic 
connection to $q$ composed of four segments: 
\begin{itemize}
\item[(i)] a fast segment along the strong unstable manifold of $q$ connecting to $M^r$ close to $z_r^{\text{approach}}\in M^r$; 
\item[(ii)] a slow segment on $M^r$ initiated near $z_r^{\text{approach}}$ and terminated near $z_r^{\text{exit}}\in M^r$; 
\item[(iii)] a fast segment leaving $M^r$ near $z_r^{\text{exit}}$ and approaching $M^l$ near $z_l^{\text{approach}}\in M^l$; 
\item[(iv)] a slow segment on $M^l$ initiated near $z_l^{\text{approach}}$ and eventually terminating at $q=(0,0,0)$. 
\end{itemize}
This orbit is obtained by transversality (using the Exchange Lemma and Fenichel's theory) from a singular orbit whose projection onto the $(x,y_1)$-plane is shown in \figref{M0} (b). 
\begin{figure}[h!]
\begin{center}
\subfigure[]{\includegraphics[width=.475\textwidth]{./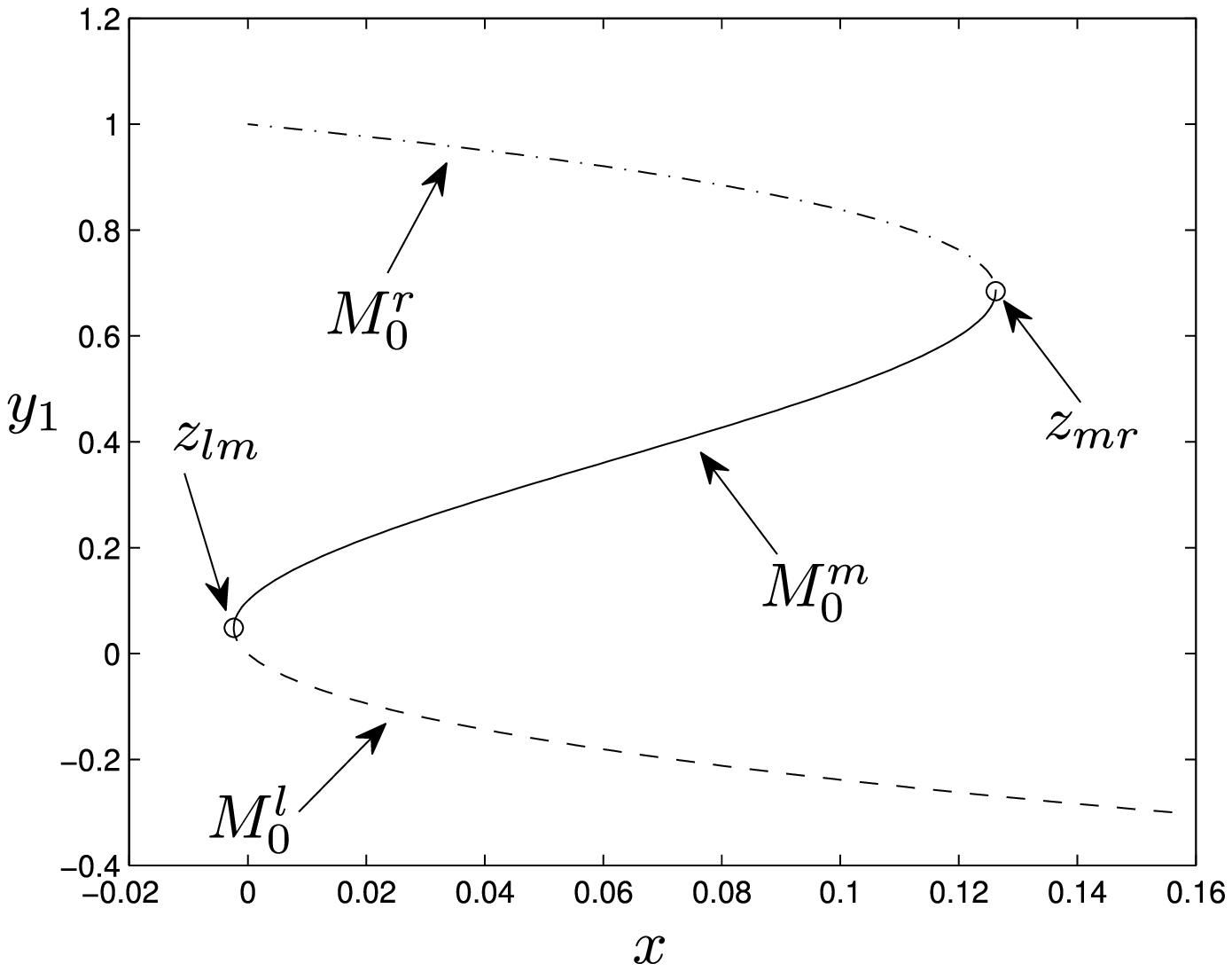}}
\subfigure[]{\includegraphics[width=.475\textwidth]{./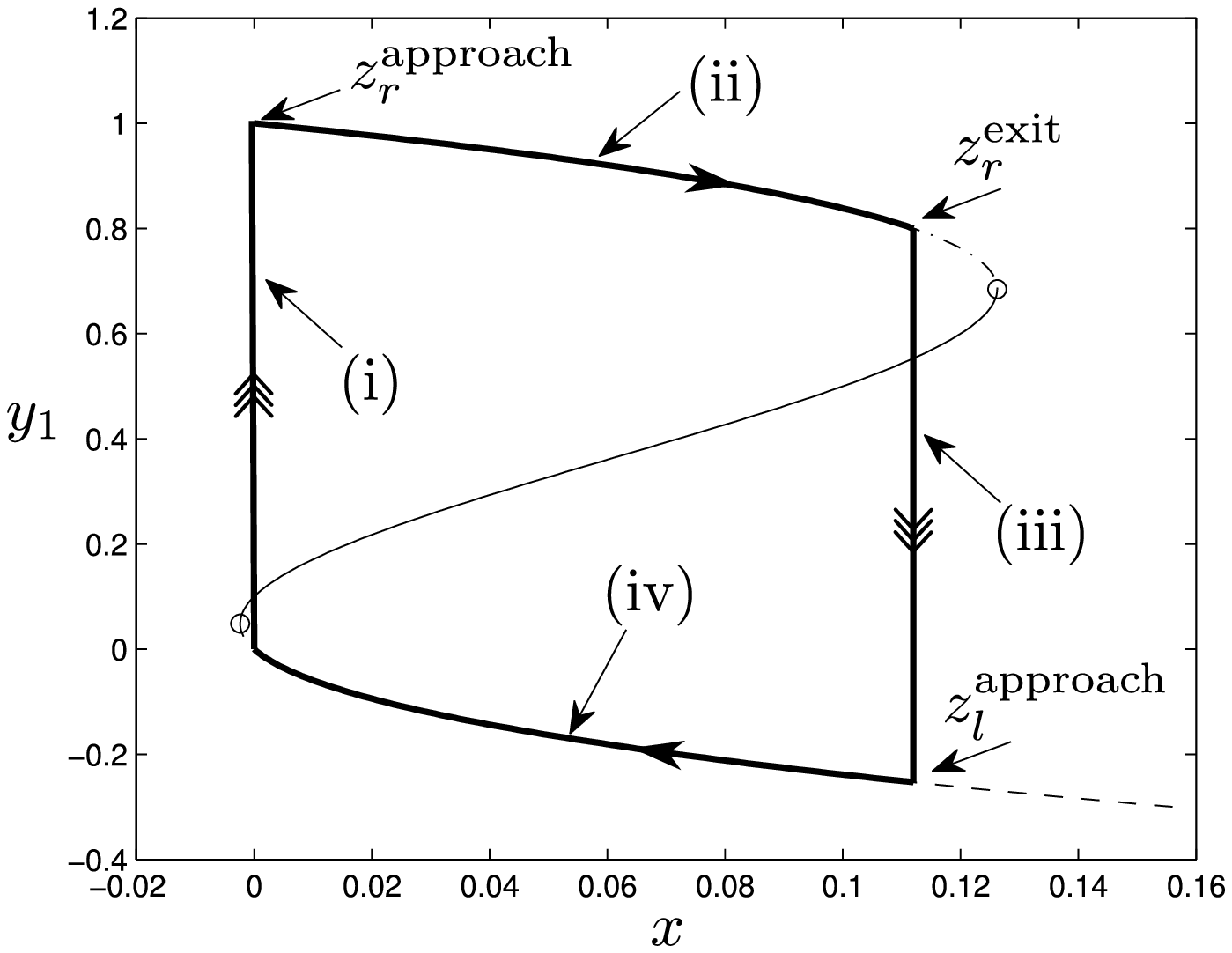}}
\caption{(a): The critical manifold $M_0\subset \{(x,y_1,y_2)\vert y_2=0\}$, and its three hyperbolic components $M_0^l$, $M_0^m$ and $M_0^r$ for $p=0$, within the plane $(x,y_1)$. The points $z_{lm}$ and $z_{rm}$ are fold points. (b): The singular homoclinic orbit is contained within the $(x,y_1)$-plane and composed of segments (i), (ii), (iii) and (iv). A true homoclinic connection can be established for $\epsilon>0$ but small by transversality from this singular homoclinic orbit. \figlab{M0}}
\end{center}
\end{figure}

We consider $p=0$ and compute the homoclinic connection to the equilibrium at $(0,0,0)$ as follows:
\begin{itemize}
 \item[$1^\circ$] Determination of $c$ and the strong unstable manifold of the equilibrium $(0,0,0)$: As in \cite{guc2} it is used that the stable manifold $W^s(M^r)$ acts a separatrix in phase space. The resulting trajectory is terminated at $y_2=0$. This fast segment is denoted by $\gamma_1$.
 \item[$2^\circ$] Computation of $M^r$ and the connection of $\gamma_1$ to $M^r$: Using the function $\phi^\epsilon$, the end-point of $\gamma_1$ is projected onto $M^r$ using \eqref{xx0invert} neglecting terms of order $\mathcal O(\epsilon^2 y_0^2)$, with $y_0$ measuring the deviation from $M^r$. The modified Runge-Kutta scheme is then used to compute $M^r=\{y=\eta_r(x)\}$. The connection from the end of $\gamma_1$ to $M^r$ is computed using the \textsc{SO-SMST} algorithm. This slow segment is denoted by $\gamma_2$.
 \item[$3^\circ$] Computation of $M^l$: It is obtained as a graph $y=\eta_l(x)$ by using the modified Runge-Kutta scheme in backwards integration of $q=(0,0,0)$.
 \item[$4^\circ$] Computation of $W^u(M^r)\cap W^s(M^l)$: For this the Newton's method is used to obtain a root of the function:
 $$F(x_b^r,x_b^l)=(x^r,y_2^r)(x_b^r)-(x^l,y_2^l)(x_b^l),$$
 $(x^r,y_2^r)$ being the intersection of a trajectory on $W^u(M^r)$, obtained by forward integration, that was initiated at a point that was displayed from $(x_b^r,\eta_r(x_b^r))$ on $M^r$ by an amount of $10^{-6}$ along the unstable direction, with the plane $y_1=1/2$. Similarly $(x^l,y_2^l)$ is the intersection of a trajectory on $W^s(M^l)$, obtained by backward integration, that was initiated at a point that was displayed from $(x_b^l,\eta_l(x_b^l))$ on $M^l$ by an amount of $10^{-6}$ along the stable direction, with the plane $y_1=1/2$. The Jacobian is computed through the variational equations. The derivatives $\partial_x \eta_r$ and $\partial_x \eta_l$ are obtained from the \textsc{SO} method. The resulting trajectory segment is denoted by $\gamma_3$. It connects $\gamma_2$ from the point of departure $(x_b^r,\eta(x_b^r))$ with $M^l$ through the entrance $(x_b^l,\eta_l(x_b^l))$. 
 \item[$5^\circ$] The final slow segment $\gamma_4$ is taken from $M^l$ from the entrance point $(x_b^l,\eta(x_b^l))$ to $(0,0,0)$.
\end{itemize}
The union of the segments $\gamma_1, \gamma_2, \gamma_3,$ and $\gamma_4$ forms a homoclinic orbit. The result is shown in \figref{fhn}. From here it is also clear that the homoclinic has segments near the end of segment (iv) and near the end of segment (ii) that are relatively close to the fold points $z_{lm}$ and $z_{mr}$. 
We obtain $c=1.2462875$ for $p=0$.  
The result of step $2^\circ$ for $\epsilon=10^{-3}$ is shown in \figref{fhnxy1_closeup} using a close-up. There is an error in the connection with $\gamma_1$ to $\gamma_2$ due to \eqref{xx0invert} that is not visible in this diagram. It is too small: $5\times 10^{-9}$.

A simpler alternative to the projection method used here, that is based on the determination of the function $\phi^\epsilon$, would be to use the ``naive'' fiber projection: $(x,y)\mapsto (x,\eta(x))$. See also \eqref{Naive}. In general this projection is $\mathcal O(\epsilon r)$-close to the correct one. The number $r$ again measures the deviation from the slow manifold. If this naive projection is applied here then one obtains a slightly larger error of $2\times 10^{-6}$ in the connection. There is an improvement by factor of $10^{-3}$ using the more accurate \textsc{SOF} projection without any detectable increase in time.

\begin{figure}[h!]
\begin{center}
\subfigure[]{\includegraphics[width=.475\textwidth]{./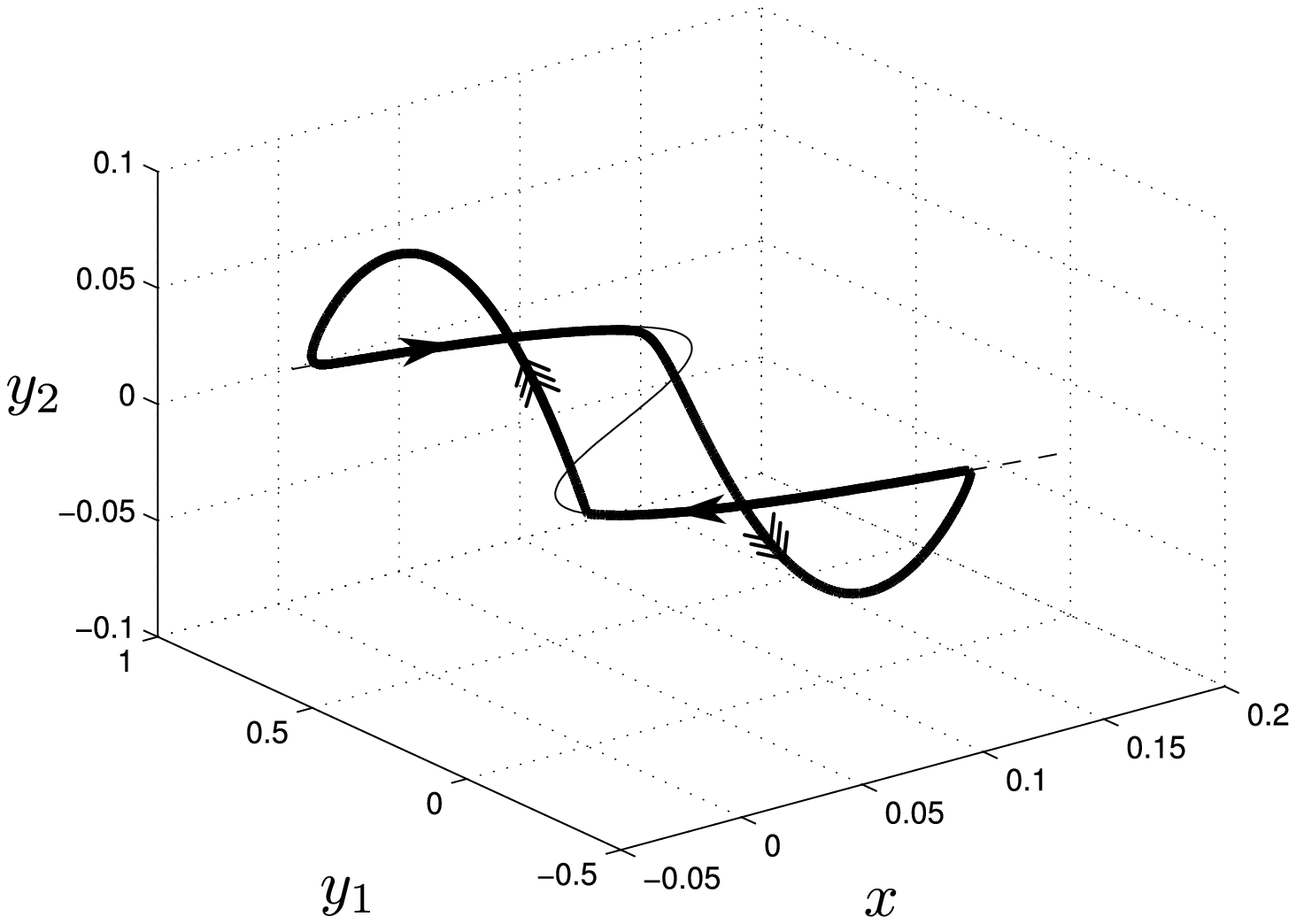}}
\subfigure[]{\includegraphics[width=.475\textwidth]{./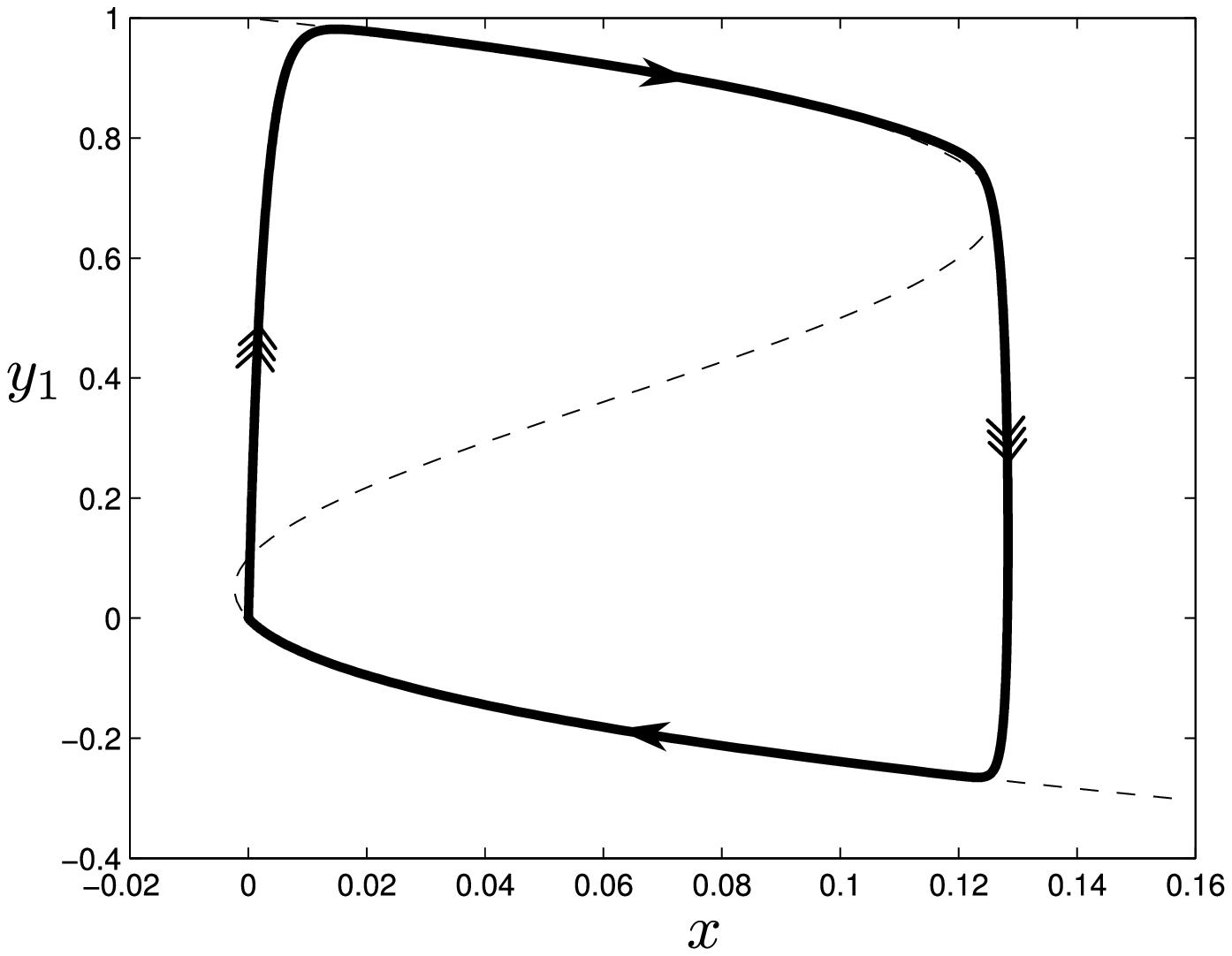}}
\caption{The homoclinic orbit for $p=0$, $c=1.2462875$ and $\epsilon=10^{-3}$ computed using the method described by $1^\circ,2^\circ,\ldots,5^\circ$.  Figure (b) shows the projection of the homoclinic onto the $(x,y_1)$-plane. The thin dotted lines indicate the critical manifold. \figlab{fhn}}
\end{center}
\end{figure}

\begin{figure}[h!]
\begin{center}
\subfigure[]{\includegraphics[width=.675\textwidth]{./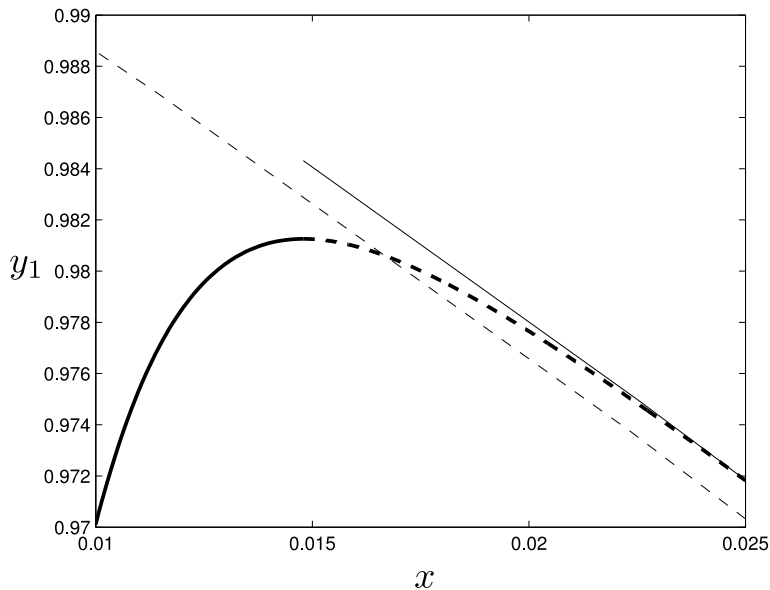}}
\caption{The result of the step $2^\circ$ for $\epsilon=10^{-3}$ and $p=0$. The full thick line shows the incoming trajectory $\gamma_1$ from step $1^\circ$. The thin line shows the base trajectory on $M^r$ obtained by projection through $\phi$. The thick dotted line is $\gamma_2$, the result of applying the \textsc{SO-SMST} algorithm to connect $\gamma_1$ to $M^r$. There is a discrepancy at the connection of $\gamma_1$ with $\gamma_2$ due to the errors associated with the projection described in \eqref{xx0invert}. It is however small and not visible being only $5\times 10^{-9}$. \figlab{fhnxy1_closeup}}
\end{center}
\end{figure}

\subsection{The Lindemann mechanism: An example not in the canonical slow-fast form}\seclab{Lindemann}
In this section we finally consider the Lindemann mechanism 
\begin{eqnarray}
 \dot x &=&X^\epsilon (x,y)=-x(x-y),\eqlab{lm}\\
 \dot y &=&Y(x,y)=x(x-y)-\epsilon y,\nonumber
\end{eqnarray}
also considered in \cite{gou1,kri4}. Here $x,\,y\ge 0$. It is an example of a slow-fast system where the slow and fast variables have not been properly identified and it is used as a caricature of an $\epsilon$-free system. Setting $(w,z)=(x+y,2y)$ gives a system in the canonical slow-fast form: 
\begin{eqnarray}
 \dot w &=&\epsilon W(w,z)=- \frac12 \epsilon z,\eqlab{wzeqn}\\
 \dot z &=& Z(w,z)=2w^2-(3w+\epsilon)z+z^2,\nonumber
\end{eqnarray}
The graph $z=w$, which corresponds to $y=x$ in the original variables, is then an normally attracting critical manifold. Using the original variables in \eqref{lm} it is easy to realise the existence of a unique equilibrium at $(x,y)=0$. This equilibrium is non-hyperbolic even for $\epsilon>0$: the eigenvalues are $0$ and $-\epsilon$. In \cite{calder2011properties} it is, nevertheless, shown that the origin attracts all of the first quadrant $x,\,y\ge 0$ for all $\epsilon>0$. 

In \cite{kri4} it was shown that the two iterative methods, \textsc{SO} and \textsc{SOF}, are both successful in approximating the slow manifold and the tangent spaces of the fibers. What proves crucial to this, is 
\begin{itemize}
\item[(a)] \textsc{SO} and \textsc{SOF} makes no explicit reference to $\epsilon$. These methods only involve the vector-fields $X^\epsilon$ and $Y$ and their first partial derivatives. 
\item[(b)] The variable $x$ can still parametrize the critical manifold. As opposed to \eqref{fs}, where normally hyperbolicity always implies that the critical manifold can be written as graph over the slow variables, this does not need to hold true if the slow and fast variables have not been properly identified. 
\end{itemize}
It was demonstrated in \cite{kri4} that \textsc{SO} and \textsc{SOF} performed better than the alternative CSP method when applied to \eqref{lm}. In particular, Fig. 10 in \cite{kri4} shows that $n$ applications of the CSP method and the \textsc{SOF} method give approximations of the tangent spaces of the fibers accurate to order $\mathcal O(\epsilon^{n-1})$ and $\mathcal O(\epsilon^{n+1})$, respectively. 

The system \eqref{lm} is not an example with a saddle-type slow manifold. Nevertheless, the \textsc{SO-SMST} method will still be applied in order to demonstrate its use on $\epsilon$-free systems. \figref{LindemannOrder} shows a comparison of accurate closed-form solutions for $\eta=\eta(x)$ and $\phi^\epsilon=\phi^\epsilon(x)$ obtained using Maple with solutions $\eta^{h}$ and $\phi^{\epsilon,h}$ obtained using the discretized iterative methods in \corref{etah2} and \propref{sofmod}. The comparison was made for $x=1$, grid size $h=\epsilon$ and varying values of $\epsilon$. The operator $\delta_x^h$ was again based on classical second order finite differences. The errors are seen to give approximately straight lines in the log-log scale. The slopes were $\approx 5$ and $\approx 4$ for the approximations of $\eta$ and $\phi^\epsilon$, respectively. Since $p=2$ one would expect from \eqref{dsomr}$_{h=\epsilon}$ a slope of $\approx 4$ for the determination of $\eta$. The improved slope of $\approx 5$ is due to the fact that the $\mathcal O(\epsilon)$-term in the asymptotic expansion for $\eta$:
\begin{align*}
 \eta(x) = x-\frac12 \epsilon+\mathcal O(\epsilon^2),
\end{align*}
is constant. The error $(\partial_x -\delta_x^h) (\eta^h-\eta_0)$ is therefore $\mathcal O(\epsilon^2)$ and the error in \eqref{dsomr}$_{h=\epsilon}$ should in this case with $p=2$ be $\mathcal O(\epsilon^3 h^2)$, ignoring the exponentially small terms. Moreover, the order of $\approx 4$ for the determination of $\phi^\epsilon$ is not in agreement with \eqref{errorPhinh}$_{h=\epsilon}$ since 
\begin{align}
\phi^\epsilon=-\frac12 +\mathcal O(\epsilon),\eqlab{phiEpsLindemann}
\end{align}
is not small. See also \remref{errorPhinhRemark}. However, since the zeroth order term in the expansion of $\phi^\epsilon$ in \eqref{phiEpsLindemann} is constant, the error from replacing $\partial_x$ by $\delta_x^h$ is therefore $(\partial_x-\delta_x^h)\phi^{\epsilon,h}=\mathcal O(\epsilon)$ and the total error is therefore $\mathcal O(\epsilon^2 h^2)$, again ignoring the exponentially small terms, which is in agreement with \figref{LindemannOrder} (b).

\begin{figure}[h!]
\begin{center}
\subfigure[]{\includegraphics[width=.475\textwidth]{./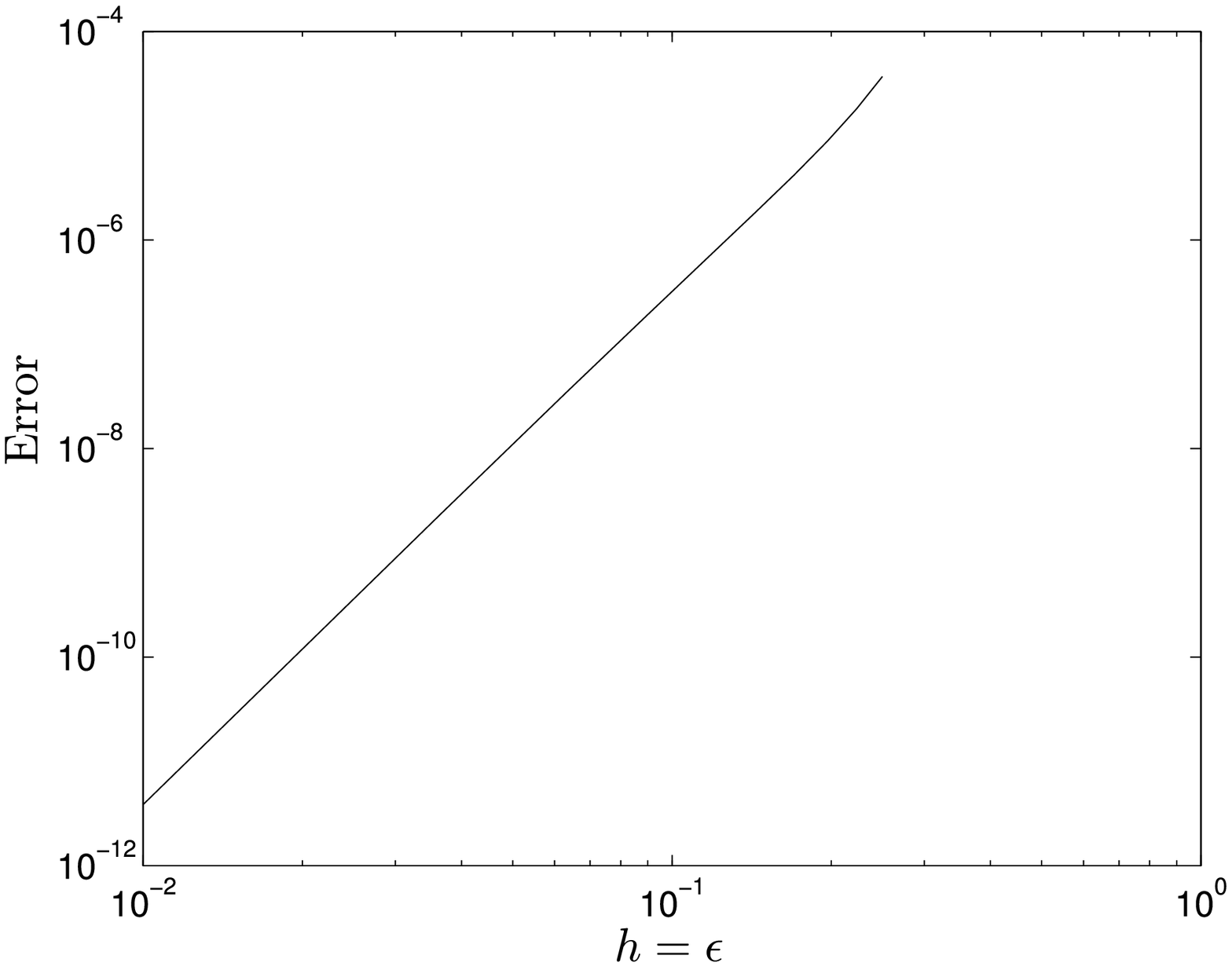}}
\subfigure[]{\includegraphics[width=.475\textwidth]{./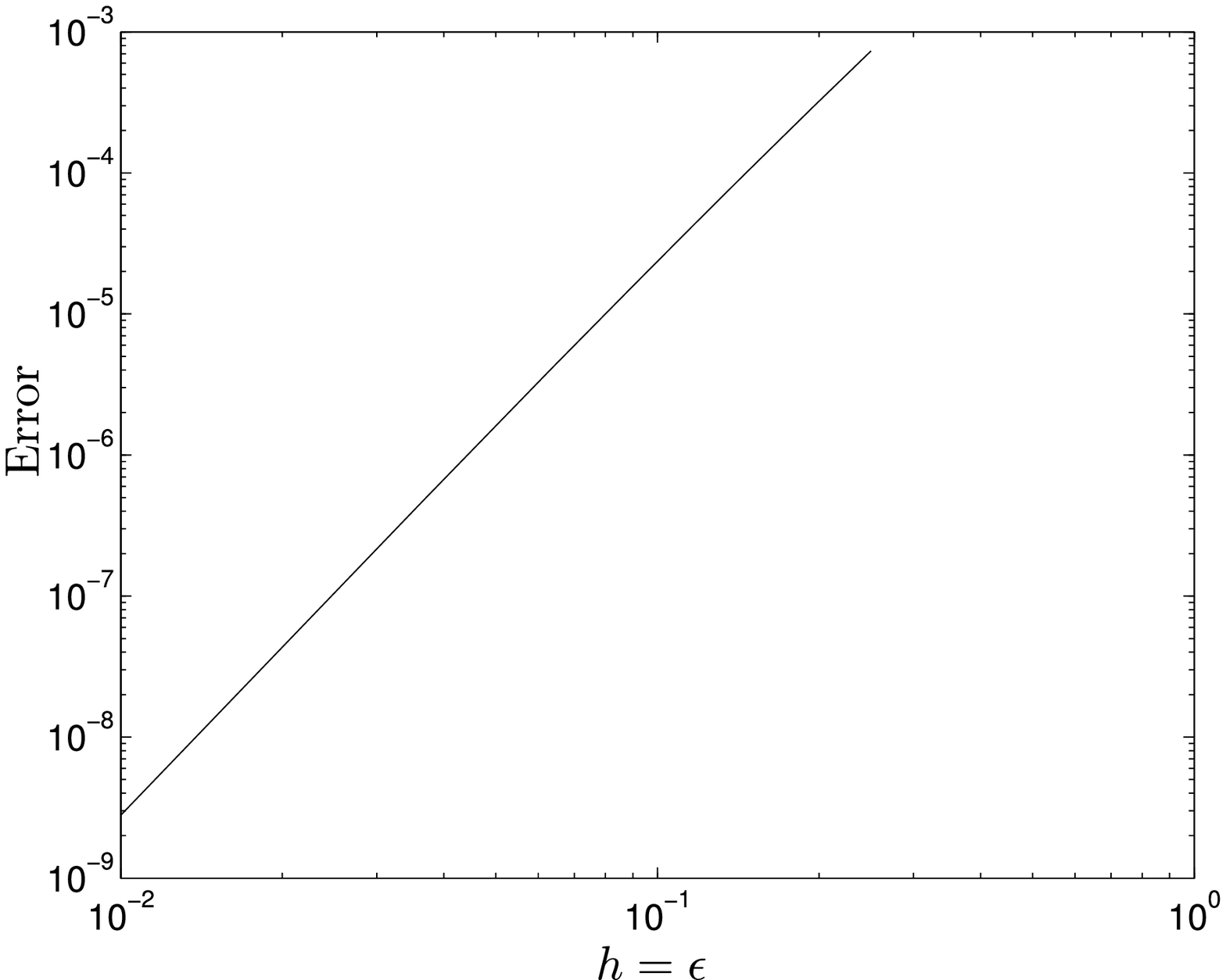}}
\end{center}
\caption{The errors $\Vert \eta^h-\eta\Vert$ (a) and $\Vert \phi^{\epsilon,h} - \phi^\epsilon\Vert$ (b) for the Lindemann mechanism \eqref{lm} at $x=1$ for $h=\epsilon$ and as a function of $\epsilon$. The slopes are $\approx 5$ and $\approx 4$. There is an improvement with respect to the estimates in \eqref{dsomr}$_{h=\epsilon}$ and \eqref{errorPhinh}$_{h=\epsilon}$ for this example. This is due to the fact that for this example the finite difference operator $\delta_x^h$ resolve the derivatives of the first terms in the asymptotic expansions of $\eta$ and $\phi^\epsilon$ exactly. \figlab{LindemannOrder}}
\end{figure}

\figref{LindemannTest} (a) shows a trajectory computed using the modified Runge-Kutta scheme (thick line) for $\Delta \tau=0.01$, $\epsilon=0.1$ and $h=10^{-3}$. The thinner lines show the result of accurate backwards integration of initial conditions that were displayed by an amount of $\pm 10^{-4},\,\pm 10^{-5},\,\ldots,\,\pm 10^{-9}$ from the slow manifold along the stable direction. Of all the pairs only for the one with $\pm 10^{-9}$ do the trajectories jump in the same direction. The slow manifold is therefore expected to be correct up to $\pm 10^{-8}$ but not more accurate than $\pm 10^{-9}$. \figref{LindemannTest} (b) shows a connection (full thick line) to the base trajectory in \figref{LindemannTest} (a) (dotted line in \figref{LindemannTest} (b)) obtained using the \textsc{SO-SMST} method with $\Delta t=\Delta \tau=0.01$. Using $\diamond$'s this solution is compared with a solution obtained by direct forward integration. There is a good agreement between the two solutions. \figref{LindemannTest} (c) shows the maximal error between  accurate reference solutions, obtained using accurate forward integration, and trajectories computed using the \textsc{SO-SMST} method as function of the distance $r$ from the slow manifold. The slope of the straight line in the logarithmic scale is $\approx 2$ in agreement with \eqsref{epsr2}{Dy0}. The ``naive'' projection described in \eqref{Naive} assumes that the fast fibers are vertical. Applying this principle to this example, will therefore lead to an $\mathcal O(1)$-error with no improvement for $\epsilon\rightarrow 0$. 

\begin{figure}[h!]
\begin{center}
\subfigure[]{\includegraphics[width=.475\textwidth]{./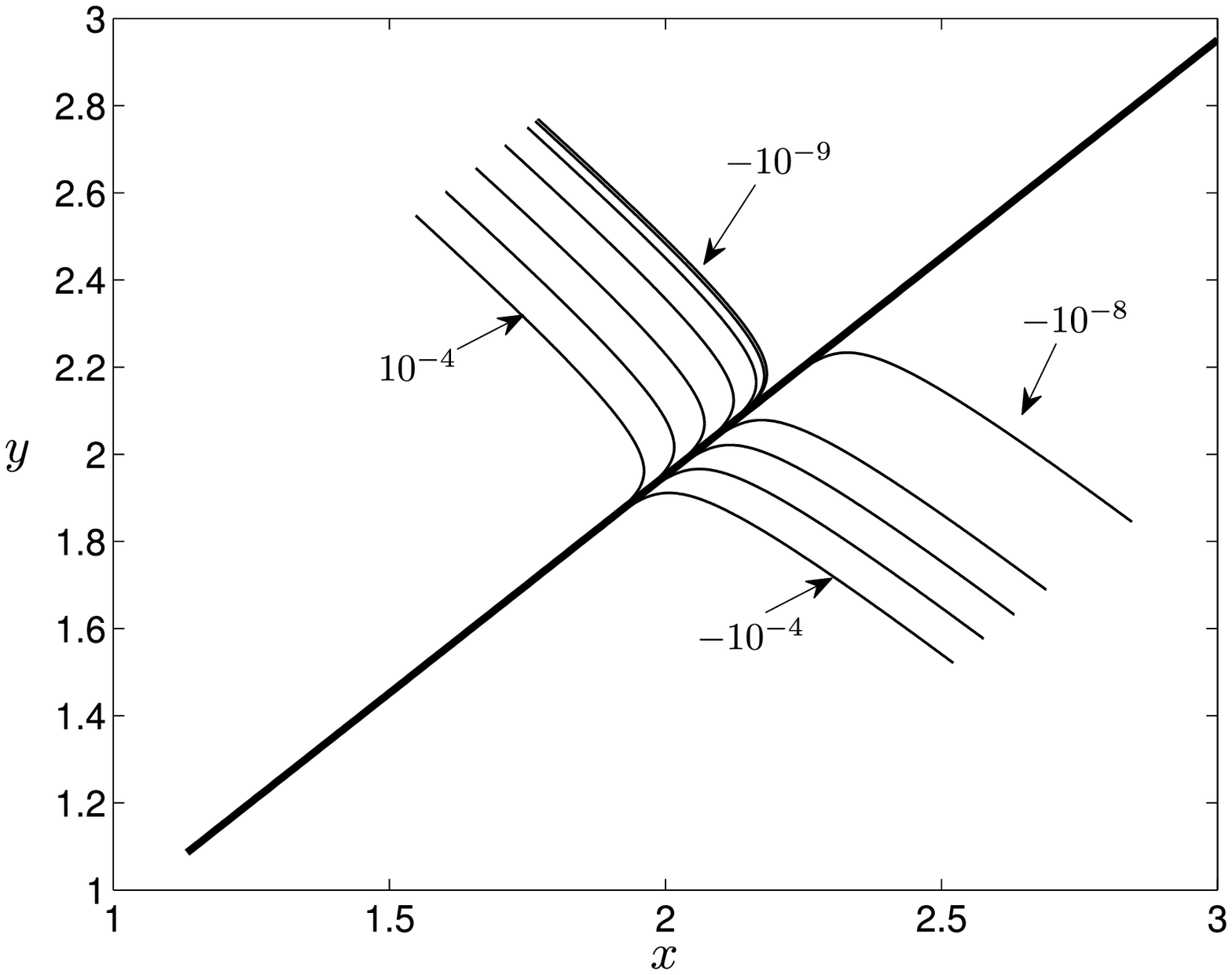}}
\subfigure[]{\includegraphics[width=.475\textwidth]{./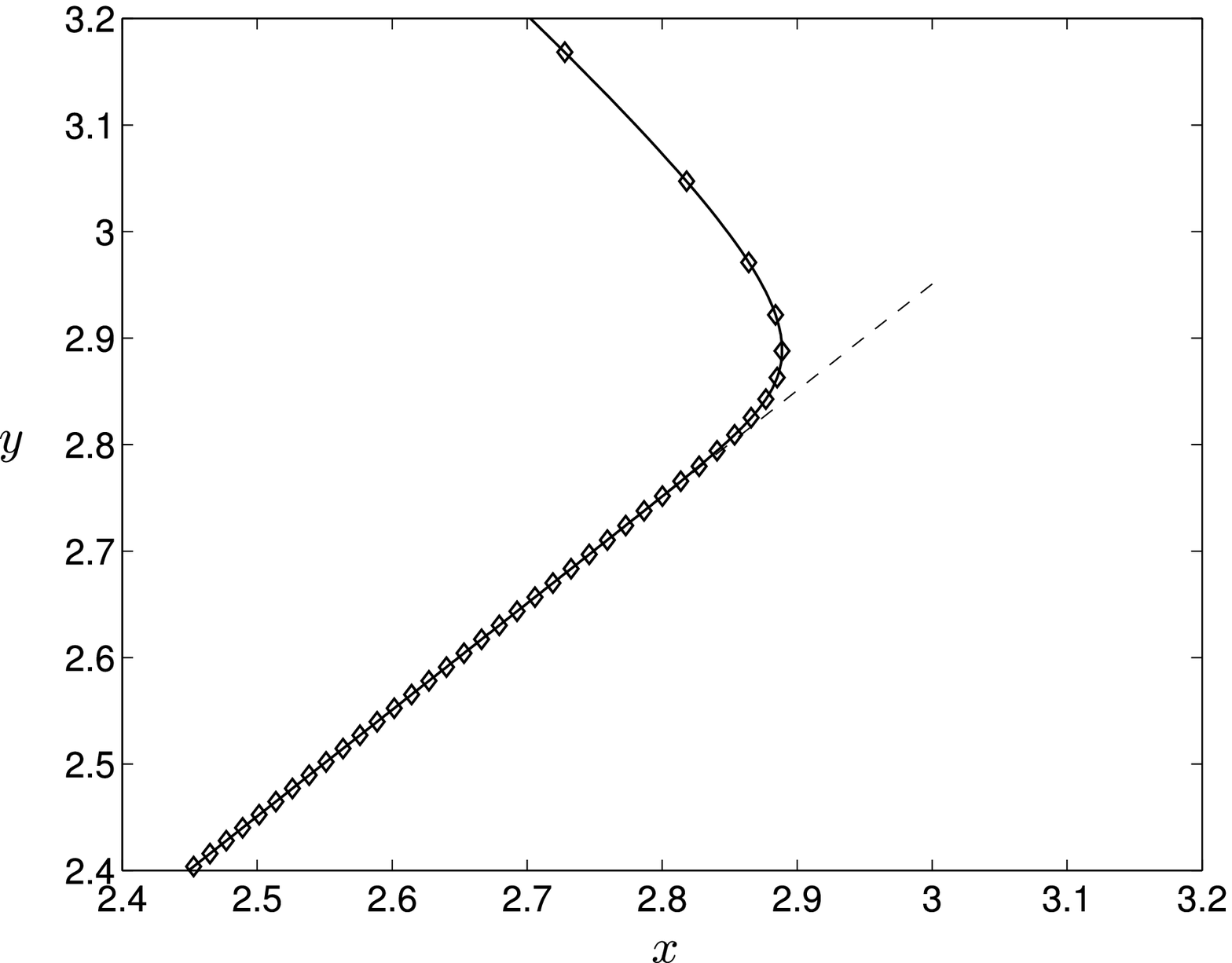}}
\subfigure[]{\includegraphics[width=.475\textwidth]{./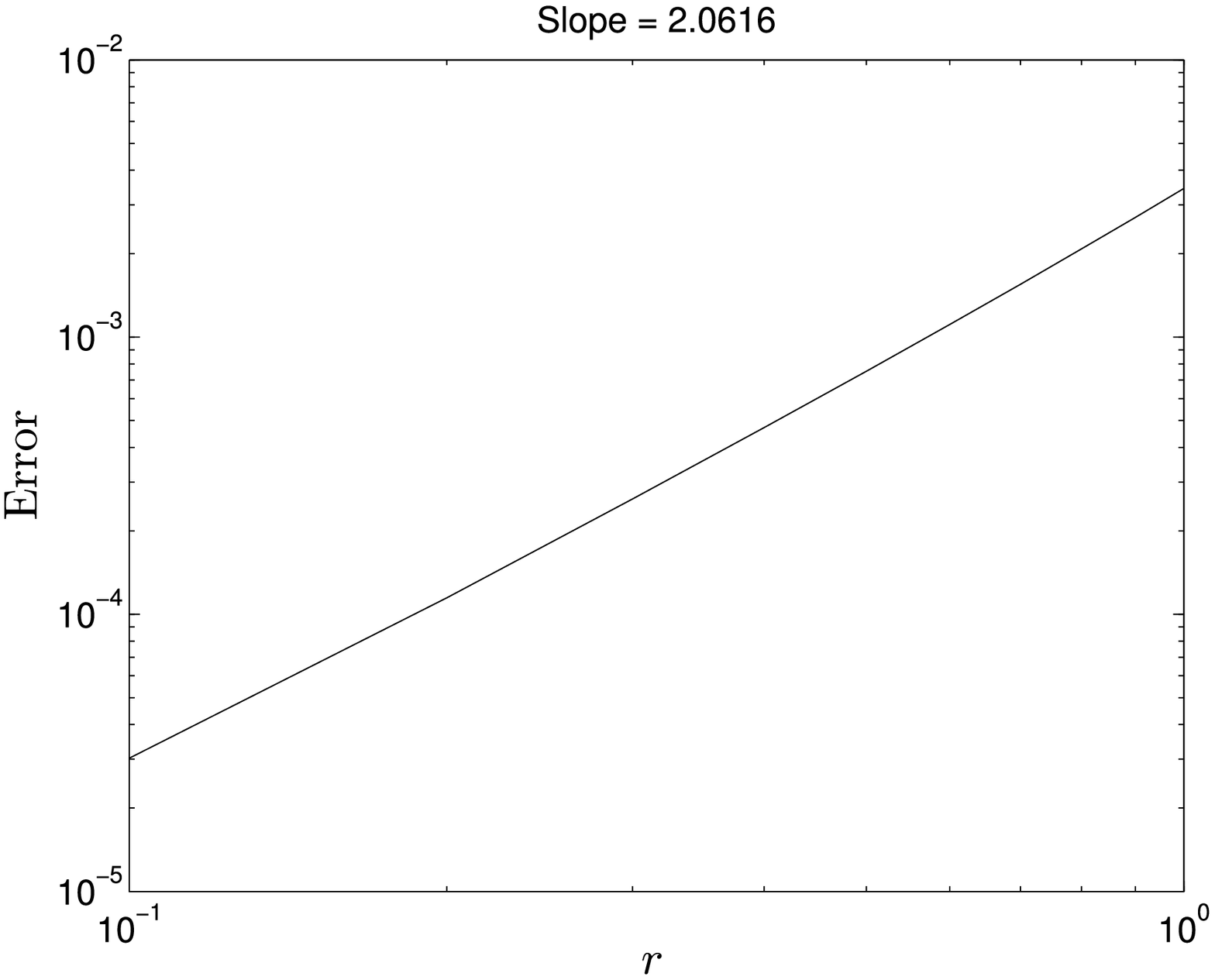}}
\caption{Figure (a) shows the result of accurate backwards integration of \eqref{lm} with $\epsilon=0.1$ for pairs of initial conditions displayed by an amount $\pm 10^{-4},\,\pm 10^{-5},\ldots,\pm 10^{-9}$ from the slow manifold, computed using \corref{etah2}, along the unstable direction. Of all the pairs only the one with $\pm 10^{-9}$ jump in the same direction. The slow-manifold is therefore expected to be correct up to $\pm 10^{-8}$. Figure (b) shows the result of applying the \textsc{SO-SMST} method (thick line) for the computation of a trajectory asymptotic to the slow manifold. The dotted line shows the slow manifold and the $\diamond$'s show the result of applying accurate direct forward integration. There is a good agreement with the $\diamond$'s and the thick line. Figure (c) shows the result of comparing direct integration with the result of applying the \textsc{SO-SMST} method for different initial distances $r$ from the slow manifold. In agreement with \eqsref{epsr2}{Dy0} the slope is $\approx 2$. \figlab{LindemannTest}}
\end{center}
\end{figure}
\begin{remark}\remlab{LindemannPropertySOF}
{The system \eqref{lm} also exemplifies the importance of the property described in \remref{modSO}. To explain this, first note that the critical manifold $y=x$ of \eqref{lm} (or $z=w$ of \eqref{wzeqn}) is non hyperbolic at $x=0$. This manifests itself in the fact that if one approximates the slow manifold using e.g. asymptotic expansions, then the accuracy of the approximation will deteriorate for $x\rightarrow 0^+$. But since $(x,y)=0$ is actually an equilibrium of the system, and the \textsc{SO} approximation always includes equilibria of the system, the \textsc{SO} approximation for the slow manifold of \eqref{lm} goes through $(x,y)=0$ and the error of the approximation therefore improves near $x=0$. The \textsc{SOF} approximation is also well-defined up to $(x,y)=0$. See \cite[Section 8]{kri4}. }
\end{remark}

\section{Conclusion}
This paper has presented an alternative method for the computation of trajectories on saddle-type slow manifolds using iterative methods to approximate the slow manifold and its fiber projections. This included a numerical implementation of a modified \textsc{SO} method (also known as the iterative method of Fraser and Roussel) in a classical Runge-Kutta quadrature scheme for the computation of these unstable trajectories on the slow manifold. This part applies to other types of slow manifolds, even normally elliptic ones. 
For the computation of transients the \textsc{SOF} method was augmented to this quadrature scheme and a basic principle of splitting the problem into two non-stiff sub-problems was outlined and demonstrated on several examples, including a model of reciprocal inhibition and the FitzHugh-Nagumo model. This principle, which was named \textsc{SO-SMST}, benefits from the fact that the singular nature of the problem has been removed. {On the other hand, the \textsc{SO-SMST} method is disadvantaged by the fact that its accuracy is determined by $\epsilon$ alone. }

{Future research should further explore the use of the proposed method in applications. A promising area is believed to be $\epsilon$-free systems. In a ``real-life'' slow-fast systems one will typically not expect there to be an explicit small parameter (such as the Olsen model \cite{olsen1983}) and it may be very difficult (if not impossible) to write the system in the canonical form \eqref{fs}, see e.g. \cite{bro1,bro2,brokri1,eqn1}. The method presented here applies to such systems, as demonstrated in \secref{Lindemann}, and hence it could potentially provide a useful tool for numerical exploration of such systems. }
\section{Acknowledgement}
I would like to thank M. Br{\o}ns and S. J. Hogan for helpful discussions and providing valuable feedback in the preparation of this document. I also thank an anonymous referee for suggestions leading to an improved manuscript. 
\appendix
\section{Error estimates for \textsc{SO-SMST}}\applab{Error}
In this appendix, the error introduced by replacing \eqref{x0eqn}:
\begin{align*}
 \dot x_0 &=\Lambda^\epsilon (x_0)+\mathcal O(\epsilon y_0^2),\\
 \dot y_0 &=A(x_0) y_0+\mathcal O(y_0^2).\nonumber
\end{align*}
with \eqref{x0eqn2}:
\begin{align*}
 \dot x_0= \Lambda^\epsilon (x_0),
\end{align*}
in the \textsc{SO-SMST} is quantified. The set $y_0=0$ is here a saddle-type slow manifold.
\begin{proposition}\proplab{est}
 Suppose that $y_0=y_0(t)$ decays exponentially fast to the slow manifold $y_0=0$ in one end and escapes it exponentially fast at the other end. That is, assume that there exists a positive constant $\lambda$ so that
 \begin{align*}
  \Vert y_0(t)\Vert \le {r}\left(\exp(-\lambda t)+\exp(-\lambda (T/\epsilon-t))\right),
 \end{align*}
 for $t\in [0,T/\epsilon]$, where $r=\max\{\Vert y_{0}(0)\Vert, \Vert y_{0}(T/\epsilon)\Vert\}$. Then the error $\Delta x_0=\Delta x_0(\tau)$, taking $\Delta x_0(0)=0$, from replacing \eqref{x0eqn} by \eqref{x0eqn2} is 
 \begin{align}
 \mathcal O(\lambda^{-1} \epsilon r^2) \quad \mbox{for all $\tau \in [0,T]$}.\eqlab{epsr2}
 \end{align}
\end{proposition}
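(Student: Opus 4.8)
The plan is to read the proposition as a forced-ODE perturbation estimate in the fast time $t$, in which the dropped remainder plays the role of an external forcing, and then to control the accumulated error over the long interval $[0,T/\epsilon]$ by a Gr\"onwall argument. Write $g(t)$ for the remainder $\mathcal O(\epsilon\Vert y_0(t)\Vert^2)$ in \eqref{x0eqn}, let $x_0^{\mathrm{f}}$ denote the solution of the full slow equation \eqref{x0eqn} and $x_0^{\mathrm{r}}$ the solution of the reduced equation \eqref{x0eqn2} issued from the same initial value, and set $\Delta x_0=x_0^{\mathrm{f}}-x_0^{\mathrm{r}}$. Then $\Delta x_0(0)=0$ and
\begin{align*}
 \frac{d}{dt}\Delta x_0 = \Lambda^\epsilon(x_0^{\mathrm{f}})-\Lambda^\epsilon(x_0^{\mathrm{r}})+g(t).
\end{align*}

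First I would exploit the smallness of the slow vector-field. Since $\Lambda^\epsilon(x)=X^\epsilon(x,\eta(x))=\mathcal O(\epsilon)$ cf. \eqref{Lambda}, its Jacobian is also $\mathcal O(\epsilon)$ on the relevant compact set, so $\Lambda^\epsilon$ is Lipschitz with a constant $L=\mathcal O(\epsilon)$. Variation of constants together with Gr\"onwall's inequality then yields
\begin{align*}
 \Vert\Delta x_0(t)\Vert \le \int_0^t e^{L(t-s)}\Vert g(s)\Vert\,ds \le e^{LT/\epsilon}\int_0^t\Vert g(s)\Vert\,ds.
\end{align*}
To bound the integral I would insert the hypothesised decay of $y_0$; squaring it and using $(a+b)^2\le 2(a^2+b^2)$ gives $\Vert y_0(s)\Vert^2\le 2r^2(e^{-2\lambda s}+e^{-2\lambda(T/\epsilon-s)})$. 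Each of these two exponentials integrates to at most $(2\lambda)^{-1}$ over any subinterval of $[0,T/\epsilon]$, so since $\Vert g(s)\Vert=\mathcal O(\epsilon\Vert y_0(s)\Vert^2)$ one obtains $\int_0^t\Vert g(s)\Vert\,ds=\mathcal O(\lambda^{-1}\epsilon r^2)$.

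The decisive and most delicate step is the Gr\"onwall prefactor $e^{LT/\epsilon}$, which over the long interval $t\le T/\epsilon$ could in principle destroy the estimate. Here the smallness $L=\mathcal O(\epsilon)$ is exactly what is needed: $L\,T/\epsilon=\mathcal O(\epsilon)\cdot\mathcal O(\epsilon^{-1})=\mathcal O(1)$ because $T=\mathcal O(1)$, so $e^{LT/\epsilon}=\mathcal O(1)$ uniformly. Combining the two bounds gives $\Vert\Delta x_0(t)\Vert=\mathcal O(\lambda^{-1}\epsilon r^2)$ for every $t\in[0,T/\epsilon]$, that is, for every $\tau=\epsilon t\in[0,T]$, which is \eqref{epsr2}. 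I expect this last point to be the only genuine obstacle: had the $x_0$-dynamics been fast, with a Lipschitz constant of order $\mathcal O(1)$, the prefactor would be $e^{\mathcal O(\epsilon^{-1})}$ and the argument would break down. This is precisely the stiffness that the decoupling via \textsc{SO} and \textsc{SOF} is designed to remove, and it is the property $\Lambda^\epsilon=\mathcal O(\epsilon)$ that rescues the estimate.
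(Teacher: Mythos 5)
Your proof is correct and is essentially the paper's own argument: both treat the dropped remainder $\mathcal O(\epsilon \Vert y_0\Vert^2)$ as a forcing whose integral is $\mathcal O(\lambda^{-1}\epsilon r^2)$ thanks to the assumed exponential decay, and both close the estimate with Gronwall's inequality whose total exponent is $\mathcal O(1)$. The only (cosmetic) difference is that the paper works in the slow time $\tau$, where the Lipschitz constant $L=\sup_x\Vert\partial_x\Lambda\Vert$ is $\mathcal O(1)$ and the interval is $[0,T]$, whereas you work in the fast time $t$ with Lipschitz constant $\mathcal O(\epsilon)$ over $[0,T/\epsilon]$ --- the same computation after the rescaling $\tau=\epsilon t$.
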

\begin{proof}
 Given that $\Delta x_0(0)=0$ and \eqref{x0eqn} then $\Delta x_0$ satisfies
 \begin{align*}
  \Delta x_0(\tau) & \le  L \int_0^{\tau} \Delta x_0(s) ds +C r^2 \int_0^{\tau} \left(\exp(-2\lambda \epsilon^{-1} s)+\exp(-2\lambda \epsilon^{-1}  (T-s))\right)ds\\
&\le L \int_0^{\tau} \Delta x_0(s) ds + C \lambda^{-1} \epsilon r^2,
\end{align*}
with $L=\sup_x \Vert \partial_x \Lambda\Vert$ and some $C>0$, for $\epsilon$ and $r$ sufficiently small. Applying Gronwall's inequality in integral form \cite{Bellman} then gives 
\begin{align*}
 \Delta x_0(\tau) \le C \lambda^{-1} \epsilon r^2 \exp(L T)\quad \mbox{for all}\quad \tau \in [0,T],
\end{align*}
from which the result follows. 
\end{proof}

The error $\Delta x_0$ from replacing \eqref{x0eqn} by \eqref{x0eqn2} gives rise to an error $\Delta y_0$ in $y_0$. This error is described in the following proposition:
\begin{proposition}\proplab{est2}
  Let $\tilde y_0$ be the solution obtained of \eqref{y0eqn2} using \eqsref{x0eqn3}{xx0} and set $\Delta y_0=\tilde y_0 - y_0$. 
Then there exist constants $C_1$ and $C_2$ independent of $\epsilon$ and $r$ so that:
\begin{align}
\Vert \Delta y_0(t)  \Vert\le \left(\Vert \Delta y_0(0)\Vert +C_1 \epsilon r^2 t_0\right)\exp(C_2 t_0),\eqlab{Dy0}
\end{align}
for all $t\in [0,t_0]$.
\end{proposition}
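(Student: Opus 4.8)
The plan is to derive a linear differential inequality for $\Delta y_0$ that is driven by the slow-variable error $\Delta x_0$ already controlled in \propref{est}, and then to close it with Gr\"onwall's inequality over the bounded fast interval $[0,t_0]$. I would work throughout in the transformed coordinates of \eqref{sofTransformation}, where the exact fast dynamics reads $\dot y_0 = A(x_0)y_0 + q(x_0,y_0)$ with $q=\mathcal O(y_0^2)$ and $x_0=x_0(t)$ the true slow variable, whereas the \textsc{SO-SMST} construction --- solving \eqref{y0eqn2} with $x$ reconstructed from the base trajectory $\hat x_0$ via \eqref{xx0}, $\hat x_0$ itself solving \eqref{x0eqn3} --- produces a $\tilde y_0$ obeying the same fast equation but with $\hat x_0$ in place of $x_0$, i.e. $\dot{\tilde y}_0 = A(\hat x_0)\tilde y_0 + q(\hat x_0,\tilde y_0) + E(t)$, where $E(t)$ lumps together the $\mathcal O(\epsilon^2 y_0^2)$ Taylor-expansion error incurred in \eqref{xx0} and the exponentially small remainders dropped from \textsc{SO} and \textsc{SOF}.

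Subtracting the two fast equations, I would obtain $\dot{\Delta y}_0 = A(\hat x_0)\Delta y_0 + F(t)$, where the forcing $F$ collects three contributions: the coefficient mismatch $[A(\hat x_0)-A(x_0)]y_0$, the remainder difference $q(\hat x_0,\tilde y_0)-q(x_0,y_0)$, and $E(t)$. The key quantitative input is $\Vert \hat x_0 - x_0\Vert = \Vert \Delta x_0\Vert = \mathcal O(\epsilon r^2)$ uniformly on $[0,T]\supseteq[0,\epsilon t_0]$, supplied by \propref{est}. Using that $A$ is Lipschitz in $x$ (see \eqref{A}) and $\Vert y_0(t)\Vert\le r$ on the window, the first term is $\mathcal O(\epsilon r^2\cdot r)$; splitting $q(\hat x_0,\tilde y_0)-q(x_0,y_0)$ into a piece proportional to $\Delta y_0$ (of size $\mathcal O(\Vert y_0\Vert\,\Vert \Delta y_0\Vert)$, which I absorb into the linear part) and a piece driven by $\Delta x_0$ (of size $\mathcal O(\epsilon r^2\Vert y_0\Vert^2)$); and noting $E=\mathcal O(\epsilon^2 r^2)+\mathcal O(e^{-c/\epsilon})$. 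Every one of these is $\le C_1\epsilon r^2$ for $r$ small (bounding $r^3,r^4\le r^2$), so $\Vert F\Vert\le C_1\epsilon r^2$, while the homogeneous coefficient satisfies $\Vert A(\hat x_0)\Vert + \mathcal O(\Vert y_0\Vert) \le C_2$ on the relevant compact region.

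With $\dot{\Delta y}_0 = A(\hat x_0)\Delta y_0 + F$, $\Vert F\Vert\le C_1\epsilon r^2$, and the coefficient bounded by $C_2$, I would finish with Gr\"onwall, obtaining $\Vert \Delta y_0(t)\Vert \le \Vert \Delta y_0(0)\Vert e^{C_2 t} + C_1\epsilon r^2\,\frac{e^{C_2 t}-1}{C_2}$, and then weaken this using $\frac{e^{C_2 t}-1}{C_2}\le t\,e^{C_2 t}$ to reach exactly the stated form \eqref{Dy0} on $[0,t_0]$.

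The main obstacle I anticipate is not the Gr\"onwall step but the bookkeeping that isolates $\Delta x_0$ as the sole $\mathcal O(\epsilon r^2)$ driver: one must verify that replacing the coupled slow variable by the externally propagated base trajectory $\hat x_0$, together with the near-identity reconstruction \eqref{xx0}, perturbs the exact fast equation only through $A(\hat x_0)-A(x_0)$ and genuinely higher-order terms, with no hidden $\mathcal O(1)$ or $\mathcal O(\epsilon)$ contribution surviving. This is precisely where the structural gain of the \textsc{SOF} transformation --- that the slow field $\Lambda^\epsilon$ is $y_0$-independent to linear order, so the fast block decouples cleanly --- must be invoked. A secondary point is that $A(\hat x_0)$ has eigenvalues of both signs (saddle type), so $\Delta y_0$ genuinely grows; this is harmless only because $t_0=\mathcal O(1)$ in the fast time, which keeps $e^{C_2 t_0}$ bounded.
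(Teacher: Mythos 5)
Your proof is correct and takes essentially the same route as the paper: both compare the two fast solutions through a Gronwall estimate on the bounded fast window $[0,t_0]$, with the forcing term controlled by the bound $\Vert \Delta x_0\Vert=\mathcal O(\epsilon r^2)$ from \propref{est}. The only difference is bookkeeping: the paper works directly with $Y(x,y)$ in the original coordinates, writing the difference of the two right-hand sides as $\int_0^1\left(\partial_x Y\,\Delta x_0+\partial_y Y\,\Delta y_0\right)ds$ and applying Gronwall's inequality in integral form, which sidesteps the decomposition $A(x_0)y_0+q$ and the tracking of transformation and exponentially small errors that you carry out in the straightened coordinates.
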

\begin{proof}
It directly follows that
\begin{align*}
 \Vert \Delta y_0(t) \Vert &\le \Vert \Delta y_0(0)\Vert + \int_0^t \int_0^1 \Vert \partial_x Y(x+s \Delta x_0,y_0+s \Delta y_0)\Vert \Vert  \Delta x_0 \Vert ds dt \\
 &+ \int_0^t \int_0^1 \Vert \partial_y Y(x+s \Delta x_0,y_0+s \Delta y_0) \Vert \Vert \Delta y_0 \Vert ds dt.
\end{align*}
Now use \eqref{epsr2} and Gronwall's inequality in integral form to obtain \eqref{Dy0}. 
\end{proof}

\begin{remark}\remlab{eps3app}
 For the trajectories computed in \cite{guc3} where the stable and unstable components are taken from the critical manifold, setting \eqref{scomp}$_{\epsilon=0}$ resp. \eqref{ucomp}$_{\epsilon=0}$ to $0$, this gives $r=\mathcal O(\epsilon)$ and errors in \eqsref{epsr2}{Dy0} (supposing in the latter case that $\Delta y_0(0)=0$) of order $\mathcal O(\epsilon^3)$. This is the proof of the statement in \remref{eps3}. 
\end{remark}
\bibliography{refs}
\bibliographystyle{plain}
 \end{document}